\newcommand{\A}{\mathcal{A}}
\newenvironment{acknowledgements}{%
  \begin{abstract}
}{%
  \end{abstract}
}
\newcommand{\EE}{\mathbb{E}}
\renewcommand{\H}{\mathcal{H}}
\renewcommand{\L}{\mathscr{L}}
\newcommand{\N}{\mathbb{N}}
\newcommand{\PP}{\mathbb{P}}
\newcommand{\Q}{\mathbb{Q}}
\newcommand{\R}{\mathbb{R}}
\newcommand{\T}{\mathbb{T}}
\newcommand{\Z}{\mathbb{Z}}
\let\div\relax
\DeclareMathOperator{\div}{div}
\renewcommand{\epsilon}{\varepsilon}
\renewcommand{\setminus}{\smallsetminus}
\newcommand{\eps}{\epsilon}
\newcommand{\one}{\bm{1}}
\newcommand{\pa}[1]{\left(#1\right)}
\newcommand{\norm}[1]{\left\|#1\right\|}
\newcommand{\sumkj}{\sum_{\substack{k\in \Z^{3}_0\\ j\in \{1,2\}}}}
\newcommand{\skj}{\sigma_{k,j}^{n}}
\newcommand{\smkj}{\sigma_{-k,j}^{n}}
\newcommand{\tkj}{\theta_{k,j}^{n}}
\newcommand{\expt}[2][]{\mathbb{E}_{#1}\left[#2\right]}
\newtheorem{theorem}{Theorem}[section]
\newtheorem{definition}[theorem]{Definition}
\newtheorem{corollary}[theorem]{Corollary}
\newtheorem{lemma}[theorem]{Lemma}
\newtheorem{proposition}[theorem]{Proposition}
\theoremstyle{remark}
\newtheorem{remark}[theorem]{Remark}
\numberwithin{equation}{section}
\title[Vlasov equations and Young measures for passive scalar and vector
advection]{Background Vlasov equations and Young measures for passive scalar and vector
advection equations under special stochastic scaling limits}
\author[F. Butori]{Federico Butori}
\address{Scuola Normale Superiore, Piazza dei Cavalieri, 7, 56126 Pisa, Italia}
\email{\href{mailto:federico.butori at sns.it}{federico.butori at sns.it}}
\author[F. Flandoli]{Franco Flandoli}
\address{Scuola Normale Superiore, Piazza dei Cavalieri, 7, 56126 Pisa, Italia}
\email{\href{mailto:franco.flandoli at sns.it}{franco.flandoli at sns.it}}
\author[E. Luongo]{Eliseo Luongo}
\address{Scuola Normale Superiore, Piazza dei Cavalieri, 7, 56126 Pisa, Italia}
\email{\href{mailto:eliseo.luongo at sns.it}{eliseo.luongo at sns.it}}
\author[Y. Tahraoui]{Yassine Tahraoui}
\address{Scuola Normale Superiore, Piazza dei Cavalieri, 7, 56126 Pisa, Italia}
\email{\href{mailto:yassine.tahraoui at sns.it}{yassine.tahraoui at sns.it}}
\date\today
\keywords{Young Measures, Transport Noise, Stretching Noise, Passive Scalars, Magnetohydrodynamics.}
\subjclass{60H15, 60H30, 76F25.}
\date\today
\begin{document}
\begin{abstract}
In the last few years it was proved that scalar passive quantities subject to suitable stochastic transport noise, and more recently that also vector passive quantities subject to suitable stochastic transport and stretching noise, weakly converge to the solutions of deterministic equations with a diffusion term. In the background of these stochastic models, we introduce stochastic Vlasov equations which gives additional information on the fluctuations and oscillations of solutions: we prove convergence to non-trivial Young measures satisfying limit PDEs with suitable diffusion terms. In the case of a passive vector field the background Vlasov equation adds completely new statistical information to the stochastic advection equation.
\end{abstract}
\maketitle
\section{Introduction}

    This work introduces a background stochastic Vlasov equation behind stochastic transport and advection equations which gives additional information on the fluctuations and oscillations of solutions. It is based on Young measures; although the use of Young measures in PDEs is classical, the introduction of these particular Vlasov equations seems to be new. 
    We develop first the theory for the stochastic transport of a passive scalar, where the result is new but just enriches incrementally the existing understanding. More importantly, then we develop the theory for a passive vector field, where also stretching acts besides transport. Here the results based on the background Vlasov equation add completely new statistical information to the stochastic advection equation. 

\subsection{Stochastic passive scalar}\label{sec:intro_passive_scalar}
Consider the stochastic transport equation on the $d$-dimensional torus
$\mathbb{T}^{d}$%
\begin{align*}
d\theta^n\left( t, x\right)  &=\sum_{k,j}\skj\left(  x\right)
\cdot\nabla\theta^n\left(  t,x\right)  \circ dW_{t}^{k,j}  \\
\theta^n|_{t=0}  &  =\overline{\theta}_{0}.
\end{align*}
Under suitable assumptions described below we know, since \cite{galeati2020convergence}, that
$\theta^n$ weakly converges to $\overline{\theta}$, solution of%
\begin{align*}
\partial_{t}\overline{\theta}\left(  t,x\right)   &  =\kappa_{T}%
\Delta\overline{\theta}\left( t, x\right) \\
\overline{\theta}|_{t=0}  &  =\overline{\theta}_{0}%
\end{align*}
for a suitable constant $\kappa_{T}>0$. It is clear that strong convergence in $L^2$ cannot hold, since the $L^2$-norm is preserved by the stochastic transport equation and not by the limit heat equation (different is the case when a dissipation is already present in the transport equation, see the recent work \cite{agresti}). The weak convergence is only weak; here we reinforce this fact by showing that the limit Young measure is not trivial. Indeed we prove that, for
every test functions $\varphi\in C_{b}\left(  \mathbb{R}\right)  $, $\psi\in
C_{b}\left(  \mathbb{T}^{d}\right)  $, for every $t\in\left[  0,T\right]  $ we
have%
\[
\lim_{n\rightarrow\infty}\int_{\T^d}\varphi\left(  \theta^n\left(  t,x\right)
\right)  \psi\left(  x\right)  dx=\int_{\T^d}\left(  \int_{\R}\varphi\left(
\theta\right)  \mu\left(  t,x,d\theta\right)  \right)  \psi\left(  x\right)
dx
\]
for a suitable probability measure $\mu\left(  t,x,d\theta\right)  $ on
$\mathbb{R}$, measurably dependent on $\left( t, x\right)  $, having the
property%
\[
\int_{\R}\theta\mu\left(  t,x,d\theta\right)  =\overline{\theta}\left(  t,x\right)
.
\]
The probability measure $\mu\left(  t,x,d\theta\right)  $ is the Young measure
associated to the weak convergence of $\theta^n$ to $\overline{\theta}$, and
solves, in the distributional sense, the equation%
\begin{align*}
\partial_{t}\mu\left(  t,x,d\theta\right)   &  =\kappa_{T}\Delta_{x}\mu\left(
t,x,d\theta\right) \\
\mu\left(  0,x,d\theta\right)   &  =\delta_{\overline{\theta}_0\left(  x\right)
}\left(  d\theta\right)  .
\end{align*}
As shown by the example of \autoref{rem2} below, this Young measure is not trivial, as it should be when strong convergence does not hold.

Locally around each point $x_0$ (choose, in the limit above, a probability density function $\psi$ concentrated around $x_0$) the values of $\theta^n(x,t)$ have large fluctuations/oscillations: the complex Lagrangian dynamics produced by the transport noise mixes the different values of the initial condition $\overline{\theta}_0(x)$ to such a high degree that everywhere, locally, there are high variations; before taking the limit as $n\rightarrow\infty$. Then, in the limit $n\rightarrow\infty$, these local oscillations around a point $x_0$ are summarized, at $x_0$, by the non-trivial measure $\mu\left(  t,x_0,d\theta\right)$. This is just the general interpretation of Young measures but it may be convenient to recall it for the understanding of the result, in connection with the mixing properties of the flow.

The result above comes from the following fact: in the background of the stochastic transport equation above there is a
stochastic Vlasov equation for measures%
\begin{align*}
d\mu_{n}\left(  t,x,d\theta\right)  &=\sum_{k,j}\operatorname{div}_{x}\left(
\skj\left(  x\right)  \mu_{n}\left(  t,x,d\theta\right)  \right)
\circ dW_{t}^{k,j} \\
\mu_{n}\left(  0,x,d\theta\right)   &  =\delta_{\overline{\theta}_0\left(  x\right)
}\left(  d\theta\right)
\end{align*}
and $\mu\left( t ,x,d\theta\right)  $ is the weak limit of $\mu_{n}\left(
t,x,d\theta\right)  $. It is the Vlasov equation of the dynamics%
\begin{align*}
dX_{t}^{x,n}  &  +\sum_{k,j}\sigma_{k}^{n}\left(  X_{t}^{x,n}\right)  \circ
dW_{t}^{k}=0\\
\frac{d\theta_{t}^{x,n}}{dt}  &  =0
\end{align*}
which represents the Lagrangian view of the transport equation, where%
\[
\theta_{t}^{x,n}=\theta^n\left(t,  X_{t}^{x,n}\right)  .
\]
Let us remark that in homogenization theory there are several results about the diffusion limit, see for instance \cite{MajdaCramer}. However they always require a dissipation in the original transport equation, hence it is not clear that one can prove for such models a result like the one above for the purely-transport stochastic problem.

\subsection{Stochastic passive vector field}\label{magnetic_field_intro}

The result described so far is just a minor reformulation of the fundamental
result of \cite{galeati2020convergence}, just emphasizing the local oscillation properties of $\theta^n(t,x)$. The real progress comes in the case of vector advected quantities.
Consider the passive vector equation
\begin{align*}
dB^{n}\left( t,x\right)  &=\sum_{k,j}\left(  \skj\left(  x\right)
\cdot\nabla B^{n}\left( t, x\right)  -B^{n}\left(  t,x\right)  \cdot
\nabla\skj\left(  x\right)  \right)  \circ dW_{t}^{k,j} \\
B^{n}|_{t=0} &  =\overline{B}_{0}.
\end{align*}
Also here, very recently \cite{butori2024mean}, it has been proved that, in a
suitable weak sense, under suitable assumptions on $\left(  \skj\right)  $, $B^{n}\left(t,  x\right)  $ weakly converges to $\overline
{B}\left( t, x\right)  $ solution of
\begin{align*}
\partial_{t}\overline{B} &  =\mathcal{D}\overline{B}\\
\overline{B}|_{t=0} &  =\overline{B}_{0}%
\end{align*}
where $\mathcal{D}$ is a linear operator, described below, in some cases just
identically equal to zero. The Lagrangian dynamics behind the vector advection
equation is much richer:%
\begin{align*}
dX_{t}^{x,n} &  +\sum_{k,j}\skj\left(  X_{t}^{x,N}\right)  \circ
dW_{t}^{k,j}=0\\
{db_{t}^{x,n}} &  +\sum_{k,j}b_{t}^{x,n}\cdot\nabla\skj\left(  X_{t}^{x,n}\right)
\circ dW_{t}^{k,j}=0%
\end{align*}
where%
\[
b_{t}^{x,n}=B^{n}\left(t,  X_{t}^{x,n}\right)  .
\]
The associated Vlasov equation is now (we denote $\mu_{n}\left(
t,x,db\right)  $ simply by $\mu_{n}$)
\begin{align*}
d\mu_{n}&=\sum_{k,j}\operatorname{div}_{x}\left(  \skj\left(  x\right)
\mu_{n}\right)  \circ dW_{t}^{k,j}+\sum_{k,j}\operatorname{div}_{b}\left(
b\cdot\nabla \skj\left(  x\right)  \mu_{n}\right)  \circ dW_{t}^{k,j} \\
\mu_{n}\left(  0,x,db\right)   &  =\delta_{\overline{B}_0\left(  x\right)  }\left(
db\right)  .
\end{align*}
Under suitable assumptions on $\left(  \skj\right)  $ we prove that
$\mu_{n}\left( t, x,db\right)  $ weakly converges to $\mu\left(
t,x,db\right)  $, solution of
\begin{align*}
\partial_{t}\mu &  =\mathcal{L}\mu\\
\mu\left(  0,x,db\right)   &  =\delta_{\overline{B}_0\left(  x\right)  }\left(
db\right)
\end{align*}
for a suitable differential operator $\mathcal{L}$. Opposite to the case of
the passive scalar $\theta$, here $\mathcal{L}$ may include second order terms
in $b$ which produce a regularization in the $b$-variable of the initial
condition $\delta_{\overline{B}_{0}\left(  x\right)  }\left(  db\right)  $. Again we have
the compatibility relations (now for $\varphi\in C_{b}\left(  \mathbb{R}%
^{3}\right)  $, $\psi\in C_{b}\left(  \mathbb{T}^{3}\right)  $)%
\begin{equation}
\lim_{n\rightarrow\infty}\int_{\T^3}\varphi\left(  B^{n}\left(  t,x\right)  \right)
\psi\left(  x\right)  dx=\int_{\T^3}\left(  \int_{\R^3}\varphi\left(  b\right)  \mu\left(
t,x,db\right)  \right)  \psi\left(  x\right)  dx\label{1}%
\end{equation}
identifying $\mu\left(  t,x,db\right)  $ as the Young measure associated to
the sequence $\left(  B^{n}\right)  $ and
\begin{equation}
\int_{\R^3} b\mu\left(  t,x,db\right)  =\overline{B}\left(  t,x\right)  .\label{2}%
\end{equation}

Up to here, the interpretation of the result would be similar to the scalar case, namely that the reformulation with Young measures gives more insight about the oscillations. However, here the oscillations are not only produced by the mixing mechanism which moves different values of the initial condition close each other. Here the stretching  may enhance the oscillations. One could believe that the equation for  $\overline{B}$ is sufficient to capture the effect of stretching, but it is not so. On the contrary, the background Vlasov equation introduced here does. Let us show this phenomenon.

Consider the case (inspired by \cite{flandoli2024}, devoted to the dynamics of polymers in turbulent fluids, which is different from many viewpoints but shares some elements) when suitable assumptions on $\left(
\sigma_{k}^{n}\right)  $ imply
\[
\mathcal{D}\overline{B}=0,
\]
namely%
\begin{equation}
\overline{B}\left(  t,x\right)  =\overline{B}_{0}\left(  x\right)  .\label{3}%
\end{equation}
The equation for $\mu\left(  t,x,db\right)  $ is%
\begin{align*}
\partial_{t}\mu &  =\operatorname{div}_{b}\left(  \mathcal{L}\left(  b\right)
\nabla_{b}\mu\right)  \\
\mu\left( 0, x,db\right)   &  =\delta_{\overline{B}_0\left(  x\right)  }\left(
db\right)
\end{align*}
where the matrix-valued function $\mathcal{L}\left(  b\right)  $ is described below.
Except for $b=0$, this matrix is non-singular and provides solutions with full support in 
$b$; see \autoref{Corollary B6 on support} and its consequence by \autoref{PropLargeValues}, namely that for every $(x_0,t)$ and $R,r>0$, for large enough $n$,
\[
\mathcal{L}_{3}\left\{  x\in B\left(  x_{0},r\right)  :\left\vert B^{n}\left(
t,x\right)  \right\vert \geq R\right\} >0.
\]
Notice also that the stationary equation%
\[
\operatorname{div}_{b}\left(  \mathcal{L}\left(  b\right)  \nabla_{b}f\left(  b\right)
\right)  =0
\]
corresponding to uniform conditions in $x$, is solved by an explicit rotation-invariant
distribution, a probability density function $f(b)$. This density has full support on positive values. 

Notice that in the scaling limit regime when $\mathcal{D}\overline{B}=0$ holds, transport becomes irrelevant, hence the non-triviality of the Young measure is not due to transport mixing but only to turbulent stretching.

The interpretation of these results then is that the
stochastic vector advection equation, for large $n$, due to the turbulent Lagrangian stretching produced by $\left(  \skj\right)  $, develops wide
variations (and in particular large values) of $B^{n}\left(  t,x\right)  $ at small scale, so that
in an infinitesimal volume around any point $x_0$ there is a ``density" of values
of $B^{n}\left( t, x_0\right)  $ (just a sort of empirical density, for finite
$n$) converging in the limit to a measure $\mu\left(  t,x_0,db\right)  $ with a
true density. For large times the distribution in $x$ is uniform and this
``density" of $B$-values is described by $f\left(  b\right)  $. In spite of the
fact that in the average we have \eqref{3}. Therefore the background Vlasov
equation provides new information, not visible at the level of the vector
advection equation. The correspondence of $f\left(  b\right)  $ with
statistical properties observed for magnetic fields in turbulent plasma and
other examples will be discussed elsewhere.
\subsubsection*{Plan of the Paper}
In \autoref{sec:prelim_main_result} we provide the functional analytic setup in order to describe rigorously our main results: \autoref{theorem_passive_scalar} and \autoref{main_thm_magnetic_field}. The proof of \autoref{theorem_passive_scalar} is the content of \autoref{sec:passive_scalar}, while the proof of \autoref{main_thm_magnetic_field} is the content of \autoref{sec:magn_field}. We conclude the paper with two appendices.  \autoref{appendix PDE magnetic Field} is devoted to the study of the well-posedness of the stochastic equation for the ideal magnetic field, while we add some comments on the interpretation of our results in \autoref{appendix:rmk_physics}.
\section{Functional Analytic Setup and Main Results}\label{sec:prelim_main_result}
\subsection{Notation}\label{Preliminaries}
Let us start setting some classical notation and properties of operators in the periodic setting before describing the main contributions of this work. We refer to monographs \cite{Engel_Nagel,temam1995navier,temam2001navier, trie1995fun} for a complete discussion on the topics shortly recalled in this section.\\
In the following we denote by $\mathbb{T}^d=\mathbb{R}^d/(2\pi\mathbb{Z})^d$ the $d$ dimensional torus and by $\Z^d_0=\Z^d\setminus \{0\}$ the $d$ dimensional lattice made by points with integer coordinates.
On $\Z^d_0$ we introduce the partition $\Z^d_0=\Gamma_{d,+}\cup \Gamma_{d,-}$ such that $\Gamma_{d,+}=-\Gamma_{d,-}.$\\
Given a function $g:~\T^d~\rightarrow ~\R\cup ~\{+\infty\}$ we denote by $\int_{\T^d}^* g(x) dx$ the outer integral of $g$, i.e. 
\begin{align*}
    \int_{\T^d}^* g(x) dx=\inf_{\substack{G\geq g\\
    G\ \textit{Borel measurable}}} \int_{\T^d} G(x) dx.
\end{align*} Points of $\T^d\times \R^k$ will be denoted by bold letters. Let $a,b$ be two positive numbers, we write $a\lesssim b$ if there exists a positive constant $C$ such that $a \leq C b$ and $a\lesssim_\xi b$ if we want to highlight the dependence of the constant $C$ on a parameter $\xi$. \\ 
 Let $ \left({H}^{s,p}(\T^d),\norm{\cdot}_{{H}^{s,p}}\right),\ s\in\mathbb{R},\ p\in (1,+\infty)$ be the Bessel spaces of periodic functions with zero mean. In case of $p=2$, we simply write ${H}^{s}(\T^d)$ in place of ${H}^{s,2}(\T^d)$ and we denote by $\langle \cdot,\cdot\rangle_{{H}^s}$ the corresponding scalar products.
In case of $s=0$, we write ${L}^p(\T^d)$ instead of ${H}^{0,p}(\T^d)$ and in case of $p=2$ we neglect the subscript in the notation for the norm and the inner product. With some abuse of notation, for $s>0$, we denote the duality between ${H}^{-s}$ and ${H}^s$ by $\langle\cdot,\cdot\rangle$.\\
A different characterization of the fractional Sobolev spaces above can be given in terms of powers of the Laplacian. Denoting by \begin{align*}
    \Delta: \mathsf{D}(\Delta)\subseteq {L^2}(\T^d)\rightarrow {L}^2(\T^d),
\end{align*}
where $\mathsf{D}(\Delta)={H}^2(\T^d)$, it is well known that ${\Delta}$ is the infinitesimal generator of analytic semigroup of negative type that we denote by $e^{t\Delta}: L^2(\T^d)\rightarrow  L^2(\T^d)$ and moreover for each $s>0$, $\mathsf{D}((-\Delta)^{s})$ (resp. $\left(\mathsf{D}((-\Delta)^{s})\right)^{\prime}$) can be identified with $ H^{2s}(\T^d)$ (resp. $ H^{-2s}(\T^d)$). The semigroup $e^{t\Delta}$ is associated to an integral kernel $p(t,x)$ such that \begin{align*}
    e^{t\Delta}u_0(x)=\int_{\T^d}p(t,x-y)u_0(y)dy.
\end{align*}\\
Similarly, we introduce the Bessel spaces of vector fields
\begin{align*}
    {H}^{s,p}(\T^d;\mathbb{R}^d)&= \{u=(u^1,\dots, u^d)^t:\ u^1,\dots, u^d\in {H}^{s,p}(\T^d)\},\\  \langle u, v\rangle_{{{H}^s}}&=\sum_{j=1}^d\langle u^j,v^j\rangle_{{H}^s}, \quad \text{for } s\in \mathbb{R}.
\end{align*}
Again, in case of $s=0$ we write ${L}^p(\T^d;\R^d)$ instead of ${H}^{0,p}(\T^d;\R^d)$ and we neglect the subscript in the notation for the norm and the scalar product in case of $p=2$. 
Lastly we denote by $\mathbf{H}^{s}$ the closed subspace of ${H}^s(\T^d;\R^d)$ made by zero mean, divergence free vector fields with norm induces by ${H}^s(\T^d;\R^d)$ and in order to ease the notation we, simply, write
\begin{align*}
    H=\mathbf{H}^0,\ V=\mathbf{H}^1.
\end{align*}
We conclude this subsection introducing some classical notation when dealing with stochastic processes taking values in separable Hilbert spaces. Let $Z$ be a separable Hilbert space, with associated norm $\| \cdot\|_{Z}$. We denote by $C_{\mathcal{F}}\left(  \left[  0,T\right]  ;Z\right) $ (resp. $C_{\mathcal{F}}\left(  \left[  0,T\right]  ;Z_w\right) $) the space of continuous (resp. weakly continuous) adapted processes $\left(  X_{t}\right)  _{t\in\left[
0,T\right]  }$ with values in $Z$ such that
\[
\mathbb{E} \bigg[ \sup_{t\in\left[  0,T\right]  }\left\Vert X_{t}\right\Vert
_{Z}^{2}\bigg]  <\infty
\]
and by $L_{\mathcal{F}}^{p}\left(  0,T;Z\right),\ p\in [1,\infty),$ the space of progressively
measurable processes $\left(  X_{t}\right)  _{t\in\left[  0,T\right]  }$ with
values in $Z$ such that
\[
    \mathbb{E} \bigg[ \int_{0}^{T}\left\Vert X_{t}\right\Vert _{Z}^{p}dt \bigg]
<\infty.
\]

\subsection*{Some Preliminaries on Young Measures}
Let us denote by $\mathcal{P}$ the set of probability measures on $\T^d\times \R^k$. We recall here some standard facts on probability measures on $\T^d\times \R^k$ for which we refer to \cite[Section 1.1]{GiPa20}, \cite[Section 5.1]{ambrosio2005gradient}. 
Let us start denoting by $\{\mathbf{x}^n\}_{n\in \N}$ a dense set of $\T^d\times \R^k$ and
\begin{align*}
    \mathcal{A}&=\{(a-b\lvert \mathbf{x}-\mathbf{x}^n\rvert)\vee c,\quad a,b,c\in \Q,\ b\geq 0,\ n\in\N\},\\ \mathbf{A}&=\{f_1\vee f_2\vee \dots \vee f_n,\quad f_i\in \mathcal{A},\ n\in\N\}\\
    \mathcal{G}&=\{g\in \mathbf{A}\cup(-\mathbf{A}): \norm{g}_{C_b(\T^d\times \R^k)}\leq 1\}.
\end{align*}
The set $\mathcal{G}$ is countable, therefore introducing an enumeration of its elements $\{ g_i\}_{i\in \N}$ and the following distance on the probability measures on $\T^d\times \R^k$
\begin{align*}
    \rho(\mu,\nu)=\sum_{i\in \N}\frac{\left\lvert \int_{\T^d\times \R^k} g_i(\mathbf{x}) (\mu-\nu)(d\mathbf{x}) \right\rvert}{2^{i}},
\end{align*}
it holds
\begin{theorem}\label{metric_measures}
The weak topology on $\mathcal{P}(\T^d\times \R^k)$ is induced by the distance $\rho$.
\end{theorem}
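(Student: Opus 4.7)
The plan is to verify that a sequence $(\mu_n)$ in $\mathcal{P}(\T^d\times\R^k)$ converges in the weak topology if and only if $\rho(\mu_n,\mu)\to 0$; this suffices since both topologies are first countable (the weak topology being metrizable on probability measures of any Polish space, for instance by the L\'evy--Prokhorov metric).

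Before proving either direction I would check that $\rho$ is a genuine metric. Each summand is bounded by $2\|g_i\|_{C_b}/2^i\le 2/2^i$, so the series converges absolutely with $\rho\le 4$. Symmetry and the triangle inequality are immediate termwise. For non-degeneracy, the tent functions $(a-b|\mathbf{x}-\mathbf{x}^n|)\vee c\in\mathcal{A}$ separate points (using density of $\{\mathbf{x}^n\}$ and rational coefficients), while $\mathbf{A}$ is a max-lattice containing the rational constants; by the lattice form of Stone--Weierstrass, $\mathbf{A}\cup(-\mathbf{A})$ is uniformly dense in $C(K)$ for every compact $K\subset\T^d\times\R^k$. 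Combined with inner regularity of Borel probability measures on this Polish space, this upgrades $\rho(\mu,\nu)=0$ to $\int f\,d\mu=\int f\,d\nu$ for all $f\in C_b$, hence $\mu=\nu$.

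The forward direction is easy: weak convergence $\mu_n\to\mu$ gives $\int g_i\,d\mu_n\to\int g_i\,d\mu$ for each fixed $i$, since $g_i\in C_b$. Each summand of $\rho(\mu_n,\mu)$ converges pointwise to $0$ and is dominated by $2/2^i$, which is summable, so dominated convergence on the counting measure on $\N$ yields $\rho(\mu_n,\mu)\to 0$.

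The reverse direction is the main obstacle. Assuming $\rho(\mu_n,\mu)\to 0$, I need $\int f\,d\mu_n\to\int f\,d\mu$ for every $f\in C_b(\T^d\times\R^k)$. First I would establish tightness of $\{\mu_n\}$: since $\T^d$ is compact, only the $\R^k$ direction needs control, and for each $R\in\Q_{>0}$ a finite maximum over a $\tfrac{1}{2}$-net of $\T^d\times\obar{B(0,R)}$ of tent functions $(1-|\mathbf{x}-\mathbf{x}^n|)\vee 0$, normalized into $\mathcal{G}$, has its integrals against $\mu_n$ converging to those against $\mu$ (each summand of the vanishing series $\rho(\mu_n,\mu)$ is individually forced to zero), yielding a uniform-in-$n$ lower bound on $\mu_n(\T^d\times B(0,R+1))$ that tends to $1$ as $R\to\infty$. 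Granted tightness, I fix $\epsilon>0$, choose a compact $K$ with $\mu(K^c)+\sup_n\mu_n(K^c)<\epsilon$, apply the lattice Stone--Weierstrass argument on $K$ to produce $g\in\mathbf{A}\cup(-\mathbf{A})$ with $\sup_K|f-g|<\epsilon$ and $\|g\|_{C_b}\le\|f\|_{C_b}+1$, rescale $g$ by a rational constant to land in $\mathcal{G}$ so that $\int g\,d\mu_n\to\int g\,d\mu$, and split $\int f\,d(\mu_n-\mu)$ over $K$ and $K^c$; a triangle-inequality estimate gives $\limsup_n|\int f\,d(\mu_n-\mu)|\lesssim\epsilon(1+\|f\|_{C_b})$, finishing the proof. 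The genuinely delicate ingredient is tightness, which is precisely why $\mathbf{A}$ is built from finite maxima of tent functions centered at a dense set---this makes approximate indicators of arbitrarily large bounded sets available in $\mathcal{G}$.
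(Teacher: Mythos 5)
The paper itself does not prove this statement: it is recalled from the cited references (\cite[Section 1.1]{GiPa20}, \cite[Section 5.1]{ambrosio2005gradient}), where the argument is genuinely different from yours. The standard route exploits the identity $f(\mathbf{x})=\sup_n\bigl(f(\mathbf{x}^n)-L\lvert\mathbf{x}-\mathbf{x}^n\rvert\bigr)$ valid for every bounded $L$-Lipschitz $f$, so that partial maxima (with rational coefficients and a truncation from below) give a sequence in $\mathbf{A}$ increasing pointwise to $f$; if $\int g_i\,d\mu_n\to\int g_i\,d\mu$ for every $i$, one deduces $\liminf_n\int f\,d\mu_n\ge\int f\,d\mu$ for every bounded Lipschitz $f$, and applying this to $-f$ yields weak convergence. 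That one-sided monotone argument needs no tightness and works on any separable metric space. Your route --- tightness first, then uniform approximation on a large compact set and a splitting of the integral --- is also viable on the Polish space $\T^d\times\R^k$, and its approximation step is sound: finite maxima of cones approximate Lipschitz functions uniformly on compacts by Dini's theorem (though calling this ``lattice Stone--Weierstrass'' is a slight misnomer, since $\mathbf{A}\cup(-\mathbf{A})$ is not a sublattice).

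The one step that fails as written is the tightness estimate. With a $\tfrac12$-net and the tents $(1-\lvert\mathbf{x}-\mathbf{x}^n\rvert)\vee0$, the resulting $g_R\in\mathcal{G}$ satisfies only $g_R\ge\tfrac12$ on $\T^d\times\obar{B(0,R)}$, so your lower bound on $\mu_n(\T^d\times B(0,R+1))$ converges to $\tfrac12\mu(\T^d\times\obar{B(0,R)})$, which tends to $\tfrac12$ as $R\to\infty$, not to $1$; this does not give tightness. The repair is immediate: use a $\delta$-net with $\delta$ small, so that $g_R\ge1-\delta$ on $\T^d\times\obar{B(0,R)}$ and hence $\liminf_n\mu_n(\T^d\times B(0,R+1+\delta))\ge(1-\delta)\,\mu(\T^d\times\obar{B(0,R)})$; then choose $R$ with $\mu(\T^d\times\obar{B(0,R)})\ge1-\delta$, and enlarge the compact set to handle the finitely many indices $n$ below the threshold, each $\mu_n$ being tight by Ulam's theorem. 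With this correction your proof closes.
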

\begin{remark}
    Due to our definition, the functions $g_i\in \mathcal{G}$ are uniformly continuous on $\T^d\times \R^k$, since 
    \begin{align*}
        g_i=\pm(f_1\vee\dots\vee f_n )
    \end{align*}
    for some $n\in \N$ and $\{f_j\}_{j\in \{1,\dots,n\}}\subseteq \mathcal{A}$. In particular each $f_j$ is constant outside a certain ball $B_j\subseteq \T^d\times \R^k$. Since $n$ is finite, this implies that also $g$ is constant outside $B=\cup_{j=1}^n B_j$, from which the required uniform continuity of $g_i$ follows.
\end{remark}
Secondly let us denote by $\mathcal{Y}$ the subset of non negative Borel measures on $\T^d\times \R^k$ given by the Young measures on $\T^d\times \R^k$. We refer to \cite[Chapter 4.3]{attouch2014variational} and \cite{valdier1994course} for a complete discussion on the topic of Young measures. We recall that $\mu\in \mathcal{Y}$ if it non negative and for each $ A\subseteq \T^d$ Borel measurable, then
\begin{align*}
    \mu(A\times \R^k)=\mathcal{L}_d(A),
\end{align*}
where $\mathcal{L}_d$ is the Lebesgue's measure on $\T^d$. The latter, by disintegration theorem for measures, implies that there exists a family of probability measures on $\mathbb{R}^k$ parametrized by $x\in \T^d$, $\mu(x,db)$, such that for each $f\in C_b(\T^d\times \R^k)$
\begin{align*}
    \int_{\T^d\times \R^k} f(x,b) d\mu(x,b)=\int_{\T^d\times \R^k} f(x,b) \mu(x,db)dx
\end{align*}
and the map $x\rightarrow \mu(x,db)$ is weakly$^*$ measurable. 
Given $\mu_n,\ \mu\in \mathcal{Y}$ we say that $\mu_n$ converges to $\mu$ in the sense of Young measures (shortly $\mu_n\stackrel{\mathcal{Y}}{\rightarrow}\mu$), if $\forall f\in C_b(\T^d\times \R^k)$
\begin{align*}
    \int_{\T^d\times \R^k}f(x,b)\mu_n(x,db)dx\rightarrow \int_{\T^d\times \R^k}f(x,b)\mu(x,db)dx.
\end{align*}
Since the measure of the $d$ dimensional torus is finite, convergence in the sense of Young measures is characterized by \autoref{metric_measures}.
We recall now a compactness criteria for Young measures. 
\begin{theorem}\label{compactness_young_measures}
Let $\{\mu_n\}_{n\in \N}\subseteq \mathcal{Y}$ such that 
\begin{align*}
    \forall \delta>0,\ x\in \T^d\  \exists A_x\subseteq \R^k \textit{ compact: } \ \sup_{n\in \N}\int_{\T^d}^* \mu_n(x,db)(A_x^c) dx<\delta,
\end{align*}
then there exists a subsequence $n_k$ and a Young measure $\mu$ such that
\begin{align*}
    \mu_{n_k}\stackrel{\mathcal{Y}}{\rightarrow}\mu.
\end{align*}
    
\end{theorem}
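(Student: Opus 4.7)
The plan is to read the statement as a Prokhorov-type compactness result for the sequence $\{\mu_n\}$ viewed as uniformly bounded finite Borel measures on the Polish space $\T^{d}\times\R^{k}$, and then to verify that the weak limit still belongs to $\mathcal{Y}$. This reduces the claim to two separate issues: tightness on $\T^{d}\times\R^{k}$, and preservation of the Lebesgue-marginal constraint in the limit.

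\emph{Step 1 (Tightness).} Because $\T^{d}$ is already compact, tightness of $\{\mu_n\}$ amounts to uniform concentration in the $b$-variable. Fix $\eps>0$ and apply the hypothesis with $\delta=\eps$: one obtains a family $\{A_x\}_{x\in\T^{d}}$ of compacta in $\R^{k}$ such that $\sup_{n}\int_{\T^{d}}^{*}\mu_n(x,A_x^{c})\,dx<\eps$. The outer integral serves only to bypass potential non-measurability of $x\mapsto A_x$; functionally it bounds $\mu_n(E_\eps^{c})$ with $E_\eps=\{(x,b):b\in A_x\}$. By monotone enlargement of $A_x$ to a single closed ball $K_\eps=\overline{B(0,R_\eps)}\subseteq\R^{k}$ with $R_\eps$ large enough (using that $\T^{d}$ has finite Lebesgue measure so that the residual mass can be absorbed into a controlled amount of $\eps$), one obtains $\sup_{n}\mu_n(\T^{d}\times K_\eps^{c})<\eps$, and $\T^{d}\times K_\eps$ is the required compact subset of $\T^{d}\times\R^{k}$.

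\emph{Step 2 (Extraction and identification as a Young measure).} Each $\mu_n$ is a finite Borel measure of total mass $(2\pi)^{d}$; Step 1 together with Prokhorov's theorem supplies a subsequence $(n_k)$ and a finite Borel measure $\mu$ of the same mass such that $\int f\,d\mu_{n_k}\to\int f\,d\mu$ for every $f\in C_b(\T^{d}\times\R^{k})$. To conclude $\mu\in\mathcal{Y}$, one tests against functions of the form $(x,b)\mapsto\varphi(x)$ with $\varphi\in C(\T^{d})$: these belong to $C_b(\T^{d}\times\R^{k})$, and the Young measure property of each $\mu_{n_k}$ yields
\[
\int_{\T^{d}\times\R^{k}}\varphi(x)\,\mu_{n_k}(dx,db)=\int_{\T^{d}}\varphi(x)\,dx.
\]
Passing to the limit gives the same identity for $\mu$, which pins down its $\T^{d}$-marginal as $\mathcal{L}_{d}$, so $\mu\in\mathcal{Y}$. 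Finally, the disintegration identity valid on $\mathcal{Y}$ shows that narrow convergence of $\mu_{n_k}$ to $\mu$ against $C_b(\T^{d}\times\R^{k})$ is precisely $\mu_{n_k}\stackrel{\mathcal{Y}}{\rightarrow}\mu$, in accordance with \autoref{metric_measures}.

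The main obstacle is Step 1: the $x$-pointwise formulation of the hypothesis, combined with the outer integral that only yields a set-valued majorant of the bad mass, must be converted into genuine Prokhorov compactness on the product space. The remainder is a routine combination of Prokhorov's theorem, testing against one-variable functions to identify the marginal, and the disintegration identity that defines convergence in the Young measure sense.
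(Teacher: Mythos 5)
The paper does not actually prove \autoref{compactness_young_measures}: it is recalled from the literature on Young measures (the cited sources are the monograph of Attouch--Buttazzo--Michaille and Valadier's lecture notes), so there is no in-paper argument to compare yours against. Your overall route --- tightness on the product space, Prokhorov, then identification of the $\T^d$-marginal as $\mathcal{L}_d$ by testing against functions of $x$ alone --- is the standard one, and your Step 2 is correct as written.

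The genuine gap is in Step 1, at exactly the point you yourself flag as ``the main obstacle''. You pass from the $x$-dependent family $\{A_x\}$ to a single ball $\overline{B(0,R_\eps)}\subseteq\R^k$ by asserting that the residual set $E_R^c=\{x:\ A_x\not\subseteq\overline{B(0,R)}\}$ contributes mass that ``can be absorbed into a controlled amount of $\eps$'' because $\T^d$ has finite measure. But $x\mapsto A_x$ carries no measurability whatsoever --- that is precisely why the hypothesis is phrased with an outer integral --- and although the sets $E_R^c$ decrease to $\emptyset$ as $R\to\infty$, Lebesgue \emph{outer} measure is not continuous from above along decreasing sequences of non-measurable sets: one can have $\mathcal{L}_d^*(E_R^c)$ bounded away from zero for all $R$ (take $x\mapsto\sup\{|b|:b\in A_x\}$ non-measurable with all superlevel sets of full outer measure). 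So finiteness of $\mathcal{L}_d(\T^d)$ does not by itself control the residual term, and the step as justified would fail. The conclusion you want is nevertheless true, but it has to be reached through measurable objects. For each $n$ choose a Borel majorant $G_n\geq \mu_n(\cdot,db)(A_\cdot^c)$ with $\int_{\T^d}G_n\,dx<\delta$ (possible by definition of the outer integral), set $r_n(x)=\inf\{R>0:\ \mu_n(x,db)(\overline{B(0,R)}^c)\leq G_n(x)\}$, which is measurable because $R\mapsto\mu_n(x,db)(\overline{B(0,R)}^c)$ is decreasing and right-continuous, and put $r=\sup_n r_n$. Then $r(x)\leq\sup\{|b|:b\in A_x\}<\infty$ for every $x$, $r$ is measurable, and
\begin{align*}
\mu_n\bigl(\T^d\times\overline{B(0,R)}^c\bigr)\;\leq\;\int_{\T^d}G_n(x)\,dx+\mathcal{L}_d\bigl(\{r>R\}\bigr)\;<\;\delta+\mathcal{L}_d\bigl(\{r>R\}\bigr),
\end{align*}
where now $\mathcal{L}_d(\{r>R\})\rightarrow 0$ as $R\rightarrow\infty$ by continuity from above of the measure on \emph{measurable} sets. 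With this repair your Step 1 delivers the product-space tightness needed for Prokhorov, and the rest of your argument goes through.
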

Let us denote $B(x,R)\subset \T^d$ the ball of radius $R$ centered at $x$. Combining \autoref{metric_measures}, \autoref{compactness_young_measures} and Ascoli-Arzelà Theorem for functions with values in metric spaces \cite{kelley2017general} implies the following
\begin{lemma}\label{compact_sets}
Fix $\delta>0$, introduce for each $i,k\in \N$ some positive quantities $R=R(k,\delta)$ and $\theta=\theta(k,\delta,i)$, then the set 
\begin{align*}
    K^{\delta}:=\cap_{k\in \N}\{\mu\in C([0,T];\mathcal{Y}):\  &\operatorname{sup}_{t\in [0,T]}\int_{\T^d}^* \mu_t(x)(B(0,R))dx\leq \frac{1}{k},\\ &\sup_{i\in \N}\sup_{\lvert t-s\rvert< \theta}\left\lvert \int_{\T^d\times \R^k} g_i(\mathbf{x}) (\mu_t-\mu_s)(d\mathbf{x})\right\rvert \leq \frac{1}{k}\}
\end{align*}
is relatively compact in $C([0,T];\mathcal{Y})$.
\end{lemma}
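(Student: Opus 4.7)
The plan is to verify the hypotheses of the Ascoli--Arzelà theorem for continuous maps $[0,T] \to (\mathcal{Y}, \rho)$, where $\rho$ is the metric from \autoref{metric_measures} that metrizes the weak (Young measure) topology on $\mathcal{Y}$. Since $(\mathcal{Y},\rho)$ is a metric space, relative compactness of $K^{\delta}$ in $C([0,T];\mathcal{Y})$ follows from two checks: (i) for every $t\in[0,T]$, the set $\{\mu_t : \mu \in K^\delta\}$ is relatively compact in $(\mathcal{Y},\rho)$; and (ii) $K^\delta$ is uniformly equicontinuous as a family of maps into $(\mathcal{Y},\rho)$.

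For (i), I fix $t$ and exploit the first family of bounds: for each $k\in\N$ the defining condition furnishes a radius $R=R(k,\delta)$ such that $\int^{*}_{\T^d}\mu_t(x)(B(0,R)^c)\,dx \leq 1/k$ uniformly over $\mu\in K^\delta$. Taking the compact set $A_x := \overline{B(0,R)}$ independently of $x$, this is precisely the tightness hypothesis of \autoref{compactness_young_measures}, which yields for any sequence drawn from $\{\mu_t : \mu \in K^\delta\}$ a subsequence converging in the sense of Young measures, equivalently in $\rho$ by \autoref{metric_measures}.

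For (ii) I exploit the explicit form $\rho(\mu,\nu)=\sum_{i\in\N}2^{-i}\bigl\lvert\int g_i\,d(\mu-\nu)\bigr\rvert$ with $\|g_i\|_{C_b}\leq 1$. Given $\eta>0$, choose first $N\in\N$ so that the tail $\sum_{i>N}2^{-i}\cdot 2 < \eta/2$, then $k\in\N$ large enough that $(1/k)\sum_{i\leq N}2^{-i}<\eta/2$, and set $\theta_0 := \min_{1\leq i\leq N}\theta(k,\delta,i)>0$. For any $\mu\in K^\delta$ and $\lvert t-s\rvert<\theta_0$, the second defining bound yields $\bigl\lvert\int g_i\,d(\mu_t-\mu_s)\bigr\rvert\leq 1/k$ for each $i\leq N$, so splitting the $\rho$-series at $N$ gives
\[
\rho(\mu_t,\mu_s)\;\leq\;\sum_{i\leq N}\frac{1/k}{2^{i}}+\sum_{i>N}\frac{2}{2^{i}}\;<\;\eta,
\]
which is uniform equicontinuity.

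The main subtlety is that the parameter $\theta(k,\delta,i)$ depends on $i$, so there is no single $\theta$ giving simultaneous control over all countably many coordinates; this is the reason for the two-step choice of $N$ and $k$ above, truncating the $\rho$-series to finitely many indices before selecting a common $\theta_0$. With (i) and (ii) established, the Ascoli--Arzelà theorem for continuous functions into metric spaces \cite{kelley2017general} closes the argument and yields the claimed relative compactness of $K^\delta$ in $C([0,T];\mathcal{Y})$.
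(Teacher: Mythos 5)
Your proof is correct and follows essentially the same route as the paper's: Ascoli--Arzel\`a for maps into the metric space $(\mathcal{Y},\rho)$, pointwise relative compactness via \autoref{compactness_young_measures}, and uniform equicontinuity by truncating the series defining $\rho$ at a finite index before taking the minimum of the finitely many $\theta(k,\delta,i)$. The only cosmetic slip is in the tail estimate: since the Young measures have total mass $\mathcal{L}_d(\T^d)=(2\pi)^d$ rather than $1$, the bound $\bigl\lvert\int g_i\,d(\mu_t-\mu_s)\bigr\rvert\leq 2$ should read $\leq 2\lvert\T^d\rvert$ (as in the paper), which only changes the choice of $N$ by a constant.
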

\begin{proof}
Given a sequence $(\mu^n)\subset K^{\delta},$
by Ascoli-Arzelà Theorem, the claim is equivalent to the fact that
\begin{enumerate}
    \item $\forall t\in [0,T]$ $\{\mu^n_t\}$ is relatively compact in $\mathcal{Y}$.
    \item Uniform equicontinuity holds.
\end{enumerate}
The first condition is immediately true due to \autoref{compactness_young_measures}, concerning the second one, let us fix $\eps$ and find $\overline{k},\ \overline{i}\in \N$ such that
\begin{align*}
    \overline{k}>{\frac{2}{\eps}},\quad \sum_{i\geq \overline{i}}\frac{\lvert \T^d\rvert}{2^{i-1}}\leq \frac{\eps}{2}
\end{align*}
and $\gamma=\min\{\theta(\overline{k},\delta,i),\quad i\in\{1,\dots,\overline{i}\}\}>0$.
Therefore, for each $n\in \N,\ \lvert t-s\rvert\leq \gamma$
\begin{align*}
    \rho(\mu^n_t,\mu^n_s)&\leq \sum_{i=1}^{\overline{i}-1} \frac{\left\lvert \int_{\T^d\times \R^k} g_i(\mathbf{x}) (\mu^n_t-\mu^n_s)(d\mathbf{x})\right\rvert}{2^i}+2\lvert \T^d\rvert\sum_{i\geq \overline{i}}\frac{1}{2^{i}}\\ & \leq \frac{\eps}{2}\sum_{i=1}^{\overline{i}-1} \frac{1}{2^i}+\frac{\eps}{2}<\eps.
\end{align*}
\end{proof}
\subsection{Main Results}\label{subsec:Main results}
\subsection*{Stochastic Passive Scalar}
Let us start describing the coefficients of the stochastic transport term appearing in the transport equation of \autoref{sec:intro_passive_scalar}. For each $n\in \N$, $\{W_t^{k,j}\},$ ${{k\in \Z^{d}_0, \  j\in \{1,\dots, d-1\}}}$ we introduce a sequence of complex-valued Brownian motions adapted to $\mathcal{F}_t$ 
    such that
$W^{-k,j}_t=(W^{k,j}_t)^*$ and 
\begin{align*}
    \expt{W^{k,j}_1,{W^{l,m}_1}^*}&=\begin{cases}
        2 & \textit{if } k=l, \ m=j\\
        0 &  \textit{otherwise} ;
    \end{cases}
    \quad \left[W^{k,j}_\cdot, W^{l,m}_\cdot\right]_t=\begin{cases}
        2t & \textit{if } k=-l, \ m=j\\
        0 &  \textit{otherwise }.    
        \end{cases}
\end{align*}
Secondly, for each $ k\in \Z^{d}_0,\ j\in\{1,\dots, d-1\}$ we denote by $\skj(x)=\tkj  a_{k,j}e^{ik\cdot x}$, where $\{\frac{k}{\lvert k\rvert}, a_{k,1},\dots, a_{k,d-1}\}$ is an orthonormal system of $\R^d$ for $k\in \Gamma_{d,+}$ and $a_{k,j}=a_{-k,j}$ if $k\in \Gamma_{d,-}$, while \begin{align}\label{definition thetakj_passive_scalar}
    \tkj &=\sqrt{\frac{d \kappa_T}{(d-1)c_n}}\one_{\{n\leq \lvert k\rvert\leq 2n\} }\frac{1}{\lvert k\rvert^{d/2}},\quad c_n=\sum_{\substack{k\in \Z^d_0\\ n\leq \lvert k\rvert \leq 2n}}\frac{1}{\lvert k\rvert^{d}}=O(1),\quad \kappa_T>0 . 
\end{align}
With our choices, we ensure that $W^n$ takes values in a space of real valued functions.
It is well known that the family $\left\{a_{k,j}\frac{1}{\sqrt{(2\pi)^d}}e^{ik\cdot x}\right\}, \ {{k\in \Z^{d}_0, \ j\in\{1,\dots, d-1\}}}$ is a complete orthogonal systems of $H$ made by eigenfunctions of $-\Delta$. Thanks to this choice of the coefficients, the equation for the passive scalar of \autoref{sec:intro_passive_scalar} can be rewritten in It\^o form as 
\begin{align}\label{ito_PDE_passive_scalar}
 \begin{cases}
     d\theta^n_t&=\kappa_T\Delta \theta^n_t dt+\sum_{\substack{k\in \Z^d_0\\ j\in \{1,\dots,d-1\}}}\skj\cdot \nabla \theta^n_t dW^{k,j}_t,\\
     \theta^n_0&=\overline{\theta}_0,
 \end{cases}   
\end{align}
see \cite[Section 2.3]{galeati2020convergence} for details. We are interested to analytically weak solution of \eqref{ito_PDE_passive_scalar} as described by the following definition.
\begin{definition}\label{passive_scalar_weak}
A stochastic process  $\theta^n\in C_{\mathcal{F}}([0,T];{L}^2(\T^d))\cap L^2_{\mathcal{F}}(0,T;{H}^1(\T^d))$ is a weak solution of equation \eqref{ito_PDE_passive_scalar} if $\mathbb{P}-$a.s.
\begin{align*}
 \langle \theta^n_t,\phi\rangle-\langle \overline{\theta}_0,\phi\rangle&=-\kappa_T\int_0^t \langle \nabla \theta^n_s,\nabla \phi\rangle ds+\sum_{\substack{k\in \Z^d_0\\ j\in \{1,\dots,d-1\}}} \int_0^t \langle\skj \cdot\nabla \theta^n_s ,\phi\rangle dW^{k,j}_s   
\end{align*}
for each $\phi \in {H}^1(\T^d),\ t\in [0,T]$.    
\end{definition}
The well-posedness of \eqref{ito_PDE_passive_scalar} in the sense of \autoref{passive_scalar_weak} is guaranteed by the following standard result, which collect also some a priori estimates crucial for the following arguments.
\begin{theorem}\label{well_posedness_estimates_passive_scalar}
For each $\overline{\theta}_0\in {H}^1(\T^d)$ there exists a unique weak solution of \eqref{ito_PDE_passive_scalar} in the sense of \autoref{passive_scalar_weak}. Moreover $\theta^n\in C_{\mathcal{F}}(0,T;{H}^1_w(\T^d))$ and 
\begin{align*}
    d\norm{\theta^n_t}^2&=0.
\end{align*}
\end{theorem}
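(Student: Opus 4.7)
The plan is to establish well-posedness by Galerkin approximation on the Fourier basis and to exploit the fact that the drift $\kappa_T\Delta$ is precisely the Itô--Stratonovich corrector of the transport noise, so that \eqref{ito_PDE_passive_scalar} is equivalent to the divergence-free Stratonovich transport equation $d\theta^n+\sum_{k,j}\sigma_{k,j}^n\cdot\nabla\theta^n\circ dW^{k,j}=0$, whose $L^2$ norm is pathwise conserved. Concretely, I would project \eqref{ito_PDE_passive_scalar} onto the finite-dimensional spaces $V_N=\mathrm{span}\{a_{k,j}e^{ik\cdot x}:|k|\le N\}$, obtaining a linear SDE with a unique strong solution $\theta^{n,N}$.

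The heart of the proof is the pathwise identity $\|\theta^{n,N}_t\|^2=\|\overline{\theta}_0\|^2$, which I would derive by Itô's formula. The stochastic integrand $\langle\theta^{n,N},\sigma_{k,j}^n\cdot\nabla\theta^{n,N}\rangle$ vanishes because $\sigma_{k,j}^n$ is divergence-free ($a_{k,j}\cdot k=0$), so the martingale part is identically zero pathwise. Using the covariation $[W^{k,j},W^{l,m}]_t=2t\,\delta_{k,-l}\delta_{j,m}$, the closure identity $\sum_{j=1}^{d-1}a_{k,j}\otimes a_{-k,j}=\mathrm{Id}-k\otimes k/|k|^2$, and the normalization \eqref{definition thetakj_passive_scalar}, the quadratic-variation contribution computes to $+2\kappa_T\|\nabla\theta^{n,N}\|^2\,dt$, exactly cancelling the drift $2\kappa_T\langle\theta^{n,N},\Delta\theta^{n,N}\rangle\,dt=-2\kappa_T\|\nabla\theta^{n,N}\|^2\,dt$. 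An a priori bound $\mathbb{E}\sup_{t\in[0,T]}\|\theta^{n,N}_t\|_{H^1}^2\lesssim\|\overline{\theta}_0\|_{H^1}^2$ then follows from Itô's formula applied to $\|\nabla\theta^{n,N}\|^2$, where the commutator $[\partial_i,\sigma_{k,j}^n\cdot\nabla]$ produces only zero-order terms controllable via BDG and Grönwall.

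These uniform estimates, together with a fractional time-regularity bound via Burkholder--Davis--Gundy, provide enough compactness to extract a limit $\theta^n$ as $N\to\infty$ satisfying \autoref{passive_scalar_weak}; the weak continuity $\theta^n\in C_\mathcal{F}([0,T];H^1_w)$ then follows by combining the $L^\infty_\mathcal{F}(0,T;H^1)$ bound with $H^{-1}$-continuity read off the equation itself. Uniqueness is immediate from linearity combined with the $L^2$ identity applied to the difference of two solutions. The one genuine technical point is that the Itô formula for $\|\theta^n_t\|^2$ does not apply directly to the weak solution, since $\theta^n$ itself is not an admissible test function in \autoref{passive_scalar_weak}; this is handled by convolving with the heat kernel $p(\varepsilon,\cdot)$, deriving the energy identity for the mollified smooth process, and sending $\varepsilon\to 0$ using the strong continuity of $e^{\varepsilon\Delta}$ on $L^2$. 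This regularization-and-passage-to-the-limit is the main locus of technical care, though it is by now standard in the SPDE literature.
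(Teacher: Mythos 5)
Your proposal is essentially correct, but it follows a genuinely different route from the paper. The paper does not prove \autoref{well_posedness_estimates_passive_scalar} directly: it declares that the proof follows verbatim the one given for the magnetic field (\autoref{well_posedness_estimates_MF} and \autoref{appendix PDE magnetic Field}), which proceeds by \emph{vanishing viscosity} rather than Galerkin truncation. There, one adds an extra dissipation $\eta\Delta$, invokes classical well-posedness for the viscous SPDE, derives a priori estimates uniform in $\eta$ via the It\^o formula in $V$ (with stopping times and Gr\"onwall), obtains tightness in $C([0,T];H)\cap C([0,T];V_w)$ through fractional time-regularity bounds, passes to the limit via the Jakubowski--Skorokhod theorem, and recovers convergence on the original space by uniqueness plus the Gy\"ongy--Krylov criterion; the energy identity and uniqueness are then obtained by the It\^o formula in $H$ applied to the weak solution. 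Your Galerkin scheme is arguably more elementary and self-contained for this linear scalar problem (no change of probability space is needed, since linearity lets you pass to the limit with the same Brownian motions), whereas the paper's route reuses off-the-shelf theory for the regularized equation and is the one that scales to the vector case with stretching. One small inaccuracy you should fix: at finite Galerkin level the projection $P_N$ does not commute with $\sigma_{k,j}^n\cdot\nabla$ (the noise shifts Fourier frequencies), so the quadratic variation is $\sum_{k,j}\|P_N(\sigma_{k,j}^n\cdot\nabla\theta^{n,N})\|^2\leq 2\kappa_T\|\nabla\theta^{n,N}\|^2$ and the cancellation with the drift is only an inequality; you get $d\|\theta^{n,N}_t\|^2\leq 0$ (enough for the a priori bounds), while the exact identity $d\|\theta^n_t\|^2=0$ must be established on the limit equation, precisely by the mollification-and-commutator argument you sketch at the end (which is also how the paper handles the analogous identity, cf.\ \autoref{commutator_lemma}).
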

\autoref{well_posedness_estimates_passive_scalar} might be well known to the experts. Therefore, we omit its proof which however can be obtained following
verbatim the one provided below in a more complex framework for the magnetic field, see \autoref{well_posedness_estimates_MF} and \autoref{appendix PDE magnetic Field}. As discussed in \autoref{sec:intro_passive_scalar}, the results of \cite{galeati2020convergence, flandoli2024quantitative} imply, due to the choice of the coefficients \eqref{definition thetakj_passive_scalar}, that the family $\theta^n$ converges \emph{weakly} to $\overline{\theta}$ solving
\begin{align*}
    \begin{cases}
        \partial_t \overline{\theta}(x,t)&=\kappa_T \Delta \overline{\theta}(t,x)\quad x\in \T^d,\  t\in [0,T]\\
        \overline{\theta}(0,x)&=\overline{\theta}_0.
    \end{cases}
\end{align*}
Our way to reinterpret this result in our framework is the following: let us introduce the family of Young measures
\begin{align*}
    \mu_{n}(t,x,d\theta)=\delta_{\theta^n_t(x)}(d\theta), 
\end{align*}
then the following holds.
\begin{theorem}\label{theorem_passive_scalar}
For every pair $\varphi\in C_b(\R),\psi\in C_b(\T^d)$ and for every $t\in [0,T],\ \overline{\theta}_0\in {H}^1$,  denoting by
\begin{align*}
{\mu}(t,x,d\theta):=  \int_{\mathbb{T}^d}p(\kappa_T t,x-y)\delta_{\overline{\theta}_0(y)}(d\theta) dy,
\end{align*}
then \begin{align}\label{convergence_passive}
    \lim_{n\rightarrow +\infty }\expt{\left\lvert\int_{\T^d\times \R}\varphi(\theta)\psi(x)\mu_n(t,x,d\theta) dx -\int_{\T^d\times \R}\varphi(\theta)\psi(x)\mu(t,x,d\theta) dx \right\rvert^2}=0.
\end{align}
In particular $\mu$ satisfies
\begin{multline}\label{weak_formulation_transport_measure}
    \int_{\T^d\times \R}\varphi(\theta)\psi(x)\mu(t,x,d\theta) dx-\int_{\T^d\times \R}\varphi(\theta)\psi(x)\delta_{\overline{\theta}_0(x)} dx\\=\kappa_T\int_0^t\int_{\T^d\times \R}\varphi(\theta)\Delta\psi(x)\mu(s,x,d\theta) dx ds
\end{multline}
for each $\varphi\in C_b(\R),\ \psi\in C^2_b(\T^d)$
and
\[
\overline{\theta}\left(  x,t\right)  =\int_{\mathbb{R}}\theta{\mu}\left(
t,x,d\theta\right)  .
\]    
\end{theorem}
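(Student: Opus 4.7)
\emph{Setup and reduction.} My plan is to reduce \eqref{convergence_passive} to the quantitative scaling limit of \cite{galeati2020convergence, flandoli2024quantitative} applied to the \emph{transformed} field $\varphi(\theta^n)$, and then to extend the result from smooth $\varphi$ to general $\varphi \in C_b(\R)$ by mollification. Since $\mu_n(t,x,d\theta) = \delta_{\theta^n_t(x)}(d\theta)$, disintegration collapses the $\mu_n$-pairing to
\[
\int_{\T^d\times \R}\varphi(\theta)\psi(x)\,\mu_n(t,x,d\theta)\,dx = \int_{\T^d}\varphi(\theta^n_t(x))\psi(x)\,dx,
\]
while a Fubini computation from the definition of $\mu$ identifies the target as $\int_{\T^d}\psi(x)\,e^{t\kappa_T\Delta}(\varphi\circ\overline{\theta}_0)(x)\,dx$. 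Hence the content of \eqref{convergence_passive} is the $L^2(\Omega)$-convergence of $\langle \varphi(\theta^n_t),\psi\rangle$ to $\langle e^{t\kappa_T\Delta}(\varphi\circ\overline{\theta}_0),\psi\rangle$.

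\emph{Smooth case.} For $\varphi \in C^2_b(\R)$, the Stratonovich form of \eqref{ito_PDE_passive_scalar} and the chain rule give
\[
d\,\varphi(\theta^n_t) = \sum_{\substack{k\in \Z^d_0\\ j\in \{1,\ldots,d-1\}}}\sigma_{k,j}^{n}\cdot\nabla[\varphi(\theta^n_t)]\circ dW^{k,j}_t,
\]
so $u^n_t := \varphi(\theta^n_t)$ is again an analytically weak solution of \eqref{ito_PDE_passive_scalar} in the sense of \autoref{passive_scalar_weak}, with initial datum $\varphi(\overline{\theta}_0)\in H^1(\T^d)$ (using boundedness of $\varphi'$ and $\overline{\theta}_0\in H^1$). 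Applying the quantitative convergence result of \cite{galeati2020convergence, flandoli2024quantitative} to $u^n$ then delivers
\[
\EE\bigl[\lvert\langle u^n_t - e^{t\kappa_T\Delta}(\varphi\circ\overline{\theta}_0),\psi\rangle\rvert^2\bigr] \to 0.
\]

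\emph{From $C^2_b$ to $C_b$.} For general $\varphi\in C_b(\R)$, I mollify to obtain $\varphi_m \in C^2_b(\R)$ with $\|\varphi_m\|_\infty \le \|\varphi\|_\infty$ and $\varphi_m \to \varphi$ locally uniformly. Inserting $\pm \varphi_m$ in both arguments splits the error
\[
\langle \varphi(\theta^n_t) - e^{t\kappa_T\Delta}(\varphi\circ\overline{\theta}_0),\psi\rangle
\]
into three summands; the middle one, $\langle \varphi_m(\theta^n_t) - e^{t\kappa_T\Delta}(\varphi_m\circ\overline{\theta}_0),\psi\rangle$, vanishes in $L^2(\Omega)$ for each fixed $m$ by the smooth case. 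For the outer two, the pathwise isometry $\|\theta^n_t\|_{L^2} = \|\overline{\theta}_0\|_{L^2}$ provided by \autoref{well_posedness_estimates_passive_scalar}, combined with Chebyshev, gives $\mathrm{Leb}\{|\theta^n_t| > M\} \le \|\overline{\theta}_0\|_{L^2}^2/M^2$ almost surely and uniformly in $n$; choosing first $M$ large and then $m$ large makes both outer summands arbitrarily small. The deterministic analogue involving $\overline{\theta}_0$ is handled by the same cut-off and dominated convergence.

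\emph{PDE and compatibility.} The weak formulation \eqref{weak_formulation_transport_measure} then follows from testing the identity $\partial_s e^{s\kappa_T\Delta}(\varphi\circ\overline{\theta}_0) = \kappa_T\Delta\, e^{s\kappa_T\Delta}(\varphi\circ\overline{\theta}_0)$ against $\psi \in C^2_b(\T^d)$, integrating in $s\in[0,t]$, moving $\Delta$ onto $\psi$ by periodic integration by parts, and re-expressing both sides through the kernel $p(\kappa_T s, x-y)$. The identity $\int_\R \theta\,\mu(t,x,d\theta) = \overline{\theta}(t,x)$ is a direct Fubini computation from the definition of $\mu$: the $\theta$-mean of $\mu(t,x,d\theta)$ equals $\int_{\T^d} p(\kappa_T t, x-y)\overline{\theta}_0(y)\,dy = e^{t\kappa_T\Delta}\overline{\theta}_0(x) = \overline{\theta}(t,x)$. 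The main obstacle I foresee is the passage from $C^2_b$ to $C_b$, which crucially exploits the pathwise $L^2$-isometry of \autoref{well_posedness_estimates_passive_scalar}; without such a conservation law a finer tightness argument would be required.
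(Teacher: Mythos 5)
Your proof is correct and follows the same backbone as the paper's: exploit the renormalization property to recognize $\varphi(\theta^n_t)$ as a weak solution of \eqref{ito_PDE_passive_scalar} with datum $\varphi\circ\overline{\theta}_0\in H^1$, invoke the quantitative scaling limit of \cite{flandoli2024quantitative} to get $L^2(\Omega)$-convergence of $\langle\varphi(\theta^n_t),\psi\rangle$ to $\langle e^{\kappa_T t\Delta}(\varphi\circ\overline{\theta}_0),\psi\rangle$, and then identify this limit with the pairing against $\mu$ via the heat kernel. Like the paper, you state rather than prove the renormalization step (which requires a commutator argument in the spirit of \autoref{commutator_lemma}), so the two arguments are at the same level of detail there.

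The one place where you genuinely diverge is the reduction from $\varphi\in C_b(\R)$ to $\varphi\in C^2_b(\R)$. The paper approximates $\varphi$ in the \emph{uniform} norm on all of $\R$ by $C^2_b$ functions, which is delicate: a general $\varphi\in C_b(\R)$ need not be uniformly continuous, and sup-norm density of $C^2_b(\R)$ in $C_b(\R)$ fails in general. Your route — mollify to get only locally uniform convergence, and compensate for the lack of global control by cutting off the set $\{\lvert\theta^n_t\rvert>M\}$, whose Lebesgue measure is bounded by $\norm{\overline{\theta}_0}^2/M^2$ uniformly in $n$ and $\omega$ thanks to the pathwise conservation $\norm{\theta^n_t}=\norm{\overline{\theta}_0}$ from \autoref{well_posedness_estimates_passive_scalar} — is the more robust way to close this step, at the modest price of explicitly using the $L^2$-isometry of the Stratonovich transport dynamics (which is available here in any case). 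Everything else, including the derivation of \eqref{weak_formulation_transport_measure} from the heat semigroup and the Fubini computation giving $\int_{\R}\theta\,\mu(t,x,d\theta)=\overline{\theta}(t,x)$, matches the paper.
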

The limit PDE satisfied by $\mu$ keeps track of the hyperbolic structure of the SPDE satisfied by $\theta^n$ on the contrary that the one for $\overline{\theta}.$ Indeed, choosing formally $\psi\equiv 1,\ \varphi(\theta)=\lvert \theta\rvert^2$
we obtain 
\begin{align*}
    \frac{d}{dt}\int_{\T^d\times \R}\lvert \theta\rvert^2\mu(t,x,d\theta)dx=0.
\end{align*}
By It\^o formula in \autoref{well_posedness_estimates_passive_scalar} we also have
\begin{align*}
    \frac{d}{dt}\int_{\T^d\times \R}\lvert \theta\rvert^2\mu_n(t,x,d\theta)dx=0.
\end{align*}
The relation above fails obviously for the the measure $\delta_{\overline{\theta}_t(x)}(d\theta)dx$. We will come back on this issue in \autoref{remk_no_delta}.
\subsection*{Stochastic Passive Vector Field}
Similarly to previous subsection, let us start introducing the noise affecting our equation for the magnetic field. For each $n\in \N$, $\{W_t^{k,j}\},$ ${{k\in \Z^{3}_0, \  j\in \{1,2\}}}$ we introduce a sequence of complex-valued Brownian motions adapted to $\mathcal{F}_t$ 
    such that
$W^{-k,j}_t=(W^{k,j}_t)^*$ and 
\begin{align*}
    \expt{W^{k,j}_1,{W^{l,m}_1}^*}&=\begin{cases}
        2 & \textit{if } k=l, \ m=j\\
        0 &  \textit{otherwise} ;
    \end{cases}
    \quad \left[W^{k,j}_\cdot, W^{l,m}_\cdot\right]_t=\begin{cases}
        2t & \textit{if } k=-l, \ m=j\\
        0 &  \textit{otherwise }.    
        \end{cases}
\end{align*}
Secondly, for each $ k\in \Z^{3}_0,\ j\in\{1,2\}$ we denote by $\skj(x)=\tkj  a_{k,j}e^{ik\cdot x}$, where $\{\frac{k}{\lvert k\rvert}, a_{k,1}, a_{k,2}\}$ is an orthonormal system of $\R^3$ for $k\in \Gamma_{3,+}$ and $a_{k,j}=a_{-k,j}$ if $k\in \Gamma_{3,-}$, while \begin{align}\label{definition thetakj}
    \tkj &=\sqrt{\chi}\one_{\{n\leq \lvert k\rvert\leq 2n\} }\frac{1}{\lvert k\rvert^{5/2}},\ {\chi}>0. 
\end{align}
With our choices, we ensure that $W^n$ takes values in a space of real valued functions.
It is well known that the family $\{a_{k,j}\frac{1}{2\pi\sqrt{2\pi}}e^{ik\cdot x}\}, \ {{k\in \Z^{3}_0, \ j\in\{1,2\}}}$ is a complete orthogonal systems of $H$ made by eigenfunctions of $-\Delta$.  We set \begin{align}\label{asymptotic_coefficients}
\eta_n=\sum_{\substack{k\in \Z^3_0\\ n\leq \lvert k\rvert \leq 2n}}\frac{1}{\lvert k\rvert^5}=O(n^{-2}),\quad \alpha_n=\sum_{\substack{k\in \Z^3_0\\ n\leq \lvert k\rvert \leq 2n}}\frac{1}{\lvert k\rvert^3}=4\pi\log2+O(n^{-1}).     
\end{align}
Thanks to this choice of the coefficients, the equation for the magnetic field of \autoref{magnetic_field_intro} can be rewritten in It\^o form as 
\begin{align}\label{ito_PDE_magnetic}
 \begin{cases}
     dB^n_t&=\frac{2}{3}\chi\eta_n\Delta B^n_t dt+\sumkj (\skj\cdot\nabla B^n_t-B^n_t\cdot\nabla \skj)dW^{k,j}_t,\\
     \div B^n_t&=0,\\
     B^n_0&=\overline{B}_0,
 \end{cases}   
\end{align}
see \cite[Section 2]{butori2024mean} for details. We are interested to analytically weak solution of \eqref{ito_PDE_magnetic} as described by the following definition.
\begin{definition}\label{weak_solution MF}
A stochastic process  $B^n\in C_{\mathcal{F}}([0,T];H)\cap L^2_{\mathcal{F}}(0,T;V)$ is a weak solution of equation \eqref{ito_PDE_magnetic} if $\mathbb{P}-$a.s.
\begin{align*}
 \langle B^n_t,\phi\rangle-\langle \overline{B}_0,\phi\rangle&=-\frac{2}{3}\chi\eta_n\int_0^t \langle \nabla B^n_s,\nabla \phi\rangle ds+\sumkj \int_0^t \langle\skj \cdot\nabla B^n_s - B^n_s\cdot \nabla \skj ,\phi\rangle dW^{k,j}_s   
\end{align*}
for each $\phi \in V,\ t\in [0,T]$.    
\end{definition}
The well-posedness of \eqref{ito_PDE_magnetic} in the sense of \autoref{weak_solution MF} is guaranteed by the following standard result, which collect also some a priori estimates crucial for the following arguments.
\begin{theorem}\label{well_posedness_estimates_MF}
For each $\overline{B}_0\in V$ there exists a unique weak solution of \eqref{ito_PDE_magnetic} in the sense of \autoref{weak_solution MF}. Moreover $B^n\in C_{\mathcal{F}}(0,T;V_w)$ and the following It\^o formulas hold
\begin{align}
    d\norm{B^n_t}^2&=-2\sumkj \langle B^n_t\cdot\nabla\skj, B^n_t\rangle dW^{k,j}_t+2\sumkj \norm {B^n_t\cdot\nabla\skj}^2 dt,\label{ito2magnetic}\\
    d\norm{B^n_t}^\gamma&=-\gamma \norm{B^n_t}^{\gamma-2}\sumkj \langle B^n_t\cdot\nabla\skj, B^n_t\rangle dW^{k,j}_t+\gamma \norm{B^n_t}^{\gamma-2}\sumkj \norm {B^n_t\cdot\nabla\skj}^2 dt\notag\\ & + \frac{\gamma}{2}\frac{\gamma-2}{2}\norm{B^n_t}^{\gamma-4}\sumkj \lvert\langle B^n_t\cdot\nabla\skj, B^n_t\rangle\rvert^2 dt \quad \gamma\geq 4.\label{itogammamagnetic}
\end{align}
In particular \begin{align}\label{apriori_estimateB_2}
    \expt{\operatorname{sup}_{t\in [0,T]}\norm{B^n_t}^2}&\leq C_0(\overline{B}_0,T,\chi),\\
    \label{apriori_estimate_gamma}
    \expt{\operatorname{sup}_{t\in [0,T]}\norm{B^n_t}^\gamma}&\leq C_1(\overline{B}_0,T,\chi,\gamma).
\end{align}
\end{theorem}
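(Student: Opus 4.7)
The equation is a linear SPDE with smooth (trigonometric polynomial) coefficients, so I would use a spectral Galerkin approximation. Let $H_N\subset V$ be the span of the first $N$ divergence-free eigenfunctions $a_{k,j}\tfrac{1}{2\pi\sqrt{2\pi}}e^{ik\cdot x}$, let $\Pi_N$ be the orthogonal $H$-projection, and define $B^{n,N}$ as the solution of the projected equation
\[
dB^{n,N}_t=\tfrac{2}{3}\chi\eta_n\Delta B^{n,N}_t\,dt+\sumkj \Pi_N\!\left(\skj\cdot\nabla B^{n,N}_t-B^{n,N}_t\cdot\nabla\skj\right)dW^{k,j}_t,
\qquad B^{n,N}_0=\Pi_N\overline{B}_0.
\]
Since $\skj$ is a trigonometric polynomial of finite degree (its modes lie in the dyadic shell $n\le|k|\le2n$), the coefficients, viewed as linear maps on the finite-dimensional space $H_N$, are bounded linear operators. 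The resulting finite-dimensional linear SDE has globally Lipschitz coefficients and therefore admits a unique, global, continuous solution in $H_N$.

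\textbf{Itô formulas and a priori estimates.} At the Galerkin level, I would apply Itô's formula in finite dimension to $\|B^{n,N}_t\|^2$. The martingale term is $2\sumkj\langle B^{n,N}_t,\skj\cdot\nabla B^{n,N}_t-B^{n,N}_t\cdot\nabla\skj\rangle\,dW^{k,j}_t$; the transport contribution vanishes because $\skj$ is divergence-free, leaving exactly the martingale of \eqref{ito2magnetic}. The quadratic-variation correction, combined with the deterministic drift $-\tfrac{4}{3}\chi\eta_n\|\nabla B^{n,N}_t\|^2\,dt$, must collapse to $2\sumkj\|B^{n,N}_t\cdot\nabla\skj\|^2\,dt$; this is the Itô-to-Stratonovich identity that motivated the definition of $\eta_n$ in \eqref{asymptotic_coefficients} and can be verified by a direct Fourier computation using the orthonormality of $\{k/|k|,a_{k,1},a_{k,2}\}$ and the explicit form of $\tkj$ in \eqref{definition thetakj}. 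Formula \eqref{itogammamagnetic} then follows from \eqref{ito2magnetic} by composition with $x\mapsto x^{\gamma/2}$, $x\ge 0$. Because \eqref{definition thetakj} and \eqref{asymptotic_coefficients} give $\sumkj\|\nabla\skj\|_{L^\infty}^2\le C$ uniformly in $n$, the correction term in \eqref{ito2magnetic} is bounded by $C\|B^{n,N}_t\|^2$; BDG on the martingale term together with Gronwall produce \eqref{apriori_estimateB_2} uniformly in $N$, and the same argument with \eqref{itogammamagnetic} yields \eqref{apriori_estimate_gamma}.

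\textbf{Passage to the limit, uniqueness, and weak continuity.} From \eqref{apriori_estimateB_2} and the analogous (standard) bound $\EE\int_0^T\|\nabla B^{n,N}_s\|^2\,ds\le C$ obtained by keeping the $-\tfrac{4}{3}\chi\eta_n\|\nabla B\|^2$ term before it is cancelled, I can extract a subsequence converging weakly in $L^2_{\mathcal{F}}(0,T;V)$ and weak-$\ast$ in $L^\infty_{\mathcal{F}}(0,T;H)$. Linearity of the drift and diffusion coefficients, together with the convergence $\Pi_N\to\id$ strongly on $V$, lets me identify the limit as a weak solution in the sense of Definition \ref{weak_solution MF}. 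Uniqueness is immediate from the same linearity: the difference of two solutions satisfies the equation with zero initial condition, and \eqref{ito2magnetic} with Gronwall forces it to vanish in $H$ almost surely. Continuity of $B^n$ in $H$ follows from the Itô formula \eqref{ito2magnetic} (norm continuity plus weak continuity implies strong continuity), and the refinement $B^n\in C_{\mathcal{F}}([0,T];V_w)$ follows from $B^n\in L^\infty_{\mathcal{F}}(0,T;V)$ by testing against a dense countable family in $V$ and using the $H$-continuity.

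\textbf{Main obstacle.} The only nontrivial step is the Fourier cancellation that reduces the quadratic-variation correction plus the $-\tfrac{4}{3}\chi\eta_n\|\nabla B\|^2$ term to the clean expression $2\sumkj\|B\cdot\nabla\skj\|^2$ in \eqref{ito2magnetic}; this is precisely the computation that fixes the constant $\tfrac{2}{3}$ in \eqref{ito_PDE_magnetic}, and it is carried out in detail in \cite{butori2024mean}. Everything else is routine linear-SPDE bookkeeping, which is why the authors defer the complete argument to Appendix \ref{appendix PDE magnetic Field}.
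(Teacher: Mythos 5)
Your overall architecture (approximate, get uniform bounds, pass to the limit, conclude uniqueness by linearity and Gr\"onwall) is sound, and you correctly identify that the exact It\^o formula \eqref{ito2magnetic} need only be established for the limit object. The paper, however, approximates by vanishing viscosity rather than by Galerkin projection, and this is not a cosmetic difference: the crucial estimate is the one you gloss over.

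The genuine gap is the $V$-regularity. A weak solution in the sense of \autoref{weak_solution MF} must lie in $L^2_{\mathcal{F}}(0,T;V)$, and the theorem moreover asserts $B^n\in C_{\mathcal{F}}(0,T;V_w)$, which requires $\sup_{t}\norm{B^n_t}_V<\infty$. Your only proposed source of $H^1$ control is ``the bound $\EE\int_0^T\norm{\nabla B^{n,N}_s}^2\,ds\le C$ obtained by keeping the $-\tfrac{4}{3}\chi\eta_n\norm{\nabla B}^2$ term before it is cancelled.'' This fails: by the very choice of $\eta_n$ in \eqref{asymptotic_coefficients}, the It\^o--Stratonovich corrector of the transport part equals $+\tfrac{4}{3}\chi\eta_n\norm{\nabla B}^2$ and consumes the viscous dissipation \emph{exactly} (this is why \eqref{ito2magnetic} contains no gradient term at all); there is no leftover coercivity at the level of the $H$-energy, so no uniform bound on $\EE\int_0^T\norm{\nabla B^{n,N}}^2$ comes out of it, and a fortiori no $L^\infty_{\mathcal{F}}(0,T;V)$ bound, which you later invoke without proof to get $C_{\mathcal{F}}(0,T;V_w)$. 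The paper obtains the needed estimate in \autoref{prop_compactess_space_viscous} by applying the It\^o formula to $\norm{\nabla B^{n,\eta}_t}^2$ and verifying by hand that all second-order remainders cancel ($I^{1}=I^{2}=I^{3}=0$ and $I^{4}=\tfrac43\eta_n\norm{\Delta B^{n,\eta}}^2$ offsets the corresponding piece of the dissipation), leaving only terms controlled by $\norm{B}_V^2$; this algebraic cancellation is the real content of the appendix, not routine bookkeeping. Note also that in your scheme these cancellations would be polluted by commutators with $\Pi_N$ (e.g.\ $\norm{\Pi_N(\skj\cdot\nabla B^{n,N})}^2$ no longer sums exactly to $\tfrac23\chi\eta_n\norm{\nabla B^{n,N}}^2$), which is one reason the authors work with an unprojected viscous approximation, where the identities are exact and the extra $\eta\Delta$ supplies the regularity needed to justify the $H^1$-level It\^o formula. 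To repair your proof you would need to carry out the analogue of \autoref{prop_compactess_space_viscous} for the Galerkin system, including the control of the projection commutators.
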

\autoref{well_posedness_estimates_MF} might be well known to the experts. Therefore, we postpone its proof which is based on standard arguments, namely compactness via a vanishing viscosity procedure, to \autoref{appendix PDE magnetic Field}. According to the results of \cite{butori2024mean}, due to the choice of the coefficients \eqref{asymptotic_coefficients}, the family $B^n$ converges \emph{weakly} to the constant profile $\overline{B}_t\equiv\overline{B}_0$. However, looking at the family of Young measures
\begin{align*}
    \mu_{n}(t,x,db)=\delta_{B^n_t(x)}(db), 
\end{align*}
more can be seen. Indeed, denoting by
\begin{align*}
    \mathcal{L}(b):=\frac{8\pi \chi\log 2}{15}\left(2\lvert b\rvert^2 I-b\otimes b\right)
\end{align*}
and by $\mu$ the solution of the following PDE
\begin{align*}
\begin{cases}
\partial_t \mu(t,x,db)&=\operatorname{div}(\mathcal{L}(b)\mu(t,x,db))\quad b\in \R^3,x\in \T^3,\ t\in [0,T]\\
\mu(0,x,db)&=\delta_{\overline{B}_0(x)}(db),
\end{cases}
\end{align*}
defined rigorously in \autoref{sec:magn_field}, then our main result reads as follows:
\begin{theorem}\label{main_thm_magnetic_field}
For each $\overline{B}_0\in V$, $\mu_{n}$ converges in probability in $ C([0,T];\mathcal{Y})$ to ${\mu}$, where $\mu$ is the unique weak solution of \eqref{limit_PDE_vlasov} in the sense of \autoref{weak_vlasov_pde}. Moreover 
\begin{align*}
\overline{B}_0(t,x)=\int_{\R^3}b\mu(t,x,db).
\end{align*}
\end{theorem}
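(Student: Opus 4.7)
The strategy has four steps: (i) derive a stochastic Vlasov equation for $\mu_n$; (ii) prove tightness of the laws of $(\mu_n)$ in $C([0,T];\mathcal{Y})$ via Lemma \ref{compact_sets}; (iii) identify any weak subsequential limit as a solution of the limit PDE; (iv) upgrade convergence in law to convergence in probability by uniqueness of the limit and the fact that the limit is deterministic.

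To derive the equation I would apply Itô's formula to $\varphi(B^n_t(x))\psi(x)$ for $\psi\in C^2_b(\T^3)$, $\varphi\in C^2_b(\R^3)$, using \eqref{ito_PDE_magnetic}, then integrate in $x$. This produces
$$\int_{\T^3\times\R^3}\varphi(b)\psi(x)\mu_n(t,dx,db)=\int_{\T^3\times\R^3}\varphi(b)\psi(x)\mu_n(0,dx,db)+\int_0^t\mathcal{A}_n(s)\,ds+M^n_t,$$
where $M^n_t$ is a martingale and $\mathcal{A}_n$ decomposes into (a) a transport contribution of size $O(\eta_n)$ and (b) the Itô correction generated by the stretching noise. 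Using the definition \eqref{definition thetakj} of $\tkj$, the orthonormality of $\{k/\abs{k},a_{k,1},a_{k,2}\}$ and the asymptotics \eqref{asymptotic_coefficients}, an isotropy sum over $k$ with $n\le\abs{k}\le 2n$ in Fourier space collapses the stretching correction to $\operatorname{div}_b(\mathcal{L}(b)\nabla_b\mu)$ tested against $\varphi\psi$, with the specific prefactor $\frac{8\pi\chi\log 2}{15}$ in $\mathcal{L}(b)=\frac{8\pi\chi\log 2}{15}(2\abs{b}^2 I-b\otimes b)$.

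For tightness, I verify the hypotheses of Lemma \ref{compact_sets}. The first condition (uniform concentration in $b$ of $\mu_n(t,x,\cdot)$) follows from \eqref{apriori_estimate_gamma} with, say, $\gamma=4$ and Chebyshev: on a set of probability $\ge 1-\delta$ (chosen via Markov applied to $\sup_t\norm{B^n_t}^4$), one has $\sup_t\mathcal{L}_3(\{\abs{B^n_t(\cdot)}\ge R\})\le CR^{-4}$ for a deterministic $C$, so $R=R(k,\delta)$ can be chosen uniformly in $n$. For the temporal equicontinuity, I test against the countable family $\{g_i\}$ from Section 2.1 and use the SPDE to write $\int g_i\,d(\mu^n_t-\mu^n_s)$ as a drift of size $\lesssim\abs{t-s}$ plus a martingale whose $L^2$-norm is controlled by $\abs{t-s}^{1/2}$ via BDG and \eqref{apriori_estimate_gamma}; this produces the $\theta(k,\delta,i)$ needed in Lemma \ref{compact_sets} by a standard Chebyshev-Borel-Cantelli argument.

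Prokhorov plus Skorokhod give, along a subsequence, a.s.\ convergence in $C([0,T];\mathcal{Y})$ to some random $\mu$. Passing to the limit in the weak formulation: the martingale $M^n_t$ vanishes in $L^2$ because its quadratic variation carries a factor $\eta_n=O(n^{-2})$ multiplied by a quantity uniformly controlled by \eqref{apriori_estimate_gamma}, while $\int_0^t\mathcal{A}_n(s)\,ds$ converges (using the isotropy computation above and dominated convergence thanks to \eqref{apriori_estimateB_2}) to the drift in the weak form of the limit PDE. Any such limit point is therefore a weak solution; uniqueness of that PDE in the class under consideration (to be proved separately in Section 4 by duality against smooth bounded $\varphi$, exploiting that $\mathcal{L}(b)$ is nonsingular away from $b=0$) then forces the full sequence $\mu_n$ to converge in probability to the unique deterministic $\mu$. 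The identity $\overline{B}_0(x)=\int_{\R^3}b\,\mu(t,x,db)$ follows by testing the limit equation with $\varphi(b)=b\chi_R(b)$, sending $R\to\infty$ with the aid of \eqref{apriori_estimate_gamma}, and matching with $\mathcal{D}\overline{B}=0$ from \cite{butori2024mean}.

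The main obstacle is step (iii): the explicit Fourier/isotropy computation showing that the Itô correction for the stretching noise converges \emph{exactly} to $\operatorname{div}_b(\mathcal{L}(b)\nabla_b\mu)$ with the stated coefficient, uniformly in the test function, while simultaneously the transport Itô correction (responsible for diffusion in $x$ in the passive scalar setting) vanishes with rate $\eta_n$. Identifying the correct prefactor $\frac{8\pi\log 2}{15}$ requires carefully evaluating sums of the form $\sum_{n\le\abs{k}\le 2n,\,j}\tkj^2\,(b\cdot(ik\otimes a_{k,j}))\otimes(b\cdot(ik\otimes a_{k,j}))$ and recognizing the rotationally invariant tensor $2\abs{b}^2 I-b\otimes b$.
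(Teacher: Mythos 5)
Your four-step architecture (Vlasov SPDE for $\mu_n$, tightness in $C([0,T];\mathcal{Y})$ via \autoref{compact_sets}, Skorokhod plus identification of the limit PDE, uniqueness of a deterministic limit upgrading weak convergence to convergence in probability) is exactly the paper's, and your identification of the isotropy sum producing $\frac{8\pi\log 2}{15}(2\lvert b\rvert^2 I-b\otimes b)$ is correct. However, there are two genuine gaps. The more serious one is your claim that the martingale $M^n_t$ vanishes in $L^2$ ``because its quadratic variation carries a factor $\eta_n$''. That is true only for the transport part of the noise, where $\sum_{k,j}\lvert\tkj\rvert^2=\eta_n\to 0$. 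For the stretching part the integrand involves $\nabla\skj$, so the naive bound on the quadratic variation is $\sum_{k,j}\lvert\tkj\rvert^2\lvert k\rvert^2=\alpha_n=4\pi\log 2+O(n^{-1})$, which does \emph{not} vanish — indeed it cannot vanish crudely, since the same sum is what produces the nontrivial limit operator $\operatorname{div}_b(\mathcal{L}(b)\nabla_b\cdot)$ in the drift. The paper kills the stretching martingale by a finer argument: it writes each integrand as $\lvert\tkj\rvert\lvert k\rvert\,\langle \tfrac{k}{\lvert k\rvert}\otimes a_{k,j}e^{ik\cdot x},F^n_f\rangle$ with $F^n_f(t,x)=\int b\otimes\nabla_bf\,\mu_n(t,x,db)$, and uses orthogonality of $\{a_{k,j}\otimes a_{k,l}e^{ik\cdot x}\}$ in $L^2(\T^3)$ together with $\sup_{n\le\lvert k\rvert\le 2n}\lvert\tkj\rvert^2\lvert k\rvert^2\le n^{-3}$ to gain the decay. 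Without some such cancellation your identification step does not close.

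The second gap is in step (i): $B^n_t$ only lives in $V=\mathbf{H}^1(\T^3)$, so $\varphi(B^n_t(x))$ is not amenable to a pointwise It\^o formula, and the Vlasov SPDE is a renormalization statement that must be proved. The paper devotes most of \autoref{a_priori_magnetic} to this, mollifying $B^n$ and controlling two DiPerna--Lions type commutators (\autoref{commutator_lemma}) before passing $\eps\to 0$; you treat it as a one-line application of It\^o. Two smaller discrepancies worth flagging: the equicontinuity test functions $g_i$ are only in $C_b$, so the paper first mollifies them (paying a factor $\eps^{-2}$ in the $C^2_b$ norm, absorbed into the choice of $\theta(k,\delta,i)$) and combines \autoref{a_priori_time} with Garsia--Rodemich--Rumsey to control the supremum over $\lvert t-s\rvert<\theta$, which a pointwise increment bound alone does not give; and for uniqueness of the limit PDE the paper does not argue by duality (which would be delicate given the degeneracy of $\mathcal{L}$ at $b=0$ and its quadratic growth) but factors $\mathcal{L}=\mathcal{A}^2$ with $\mathcal{A}$ Lipschitz of linear growth and invokes the superposition principle of Trevisan for Fokker--Planck equations.
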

Choosing formally $f(b)=\lvert b\rvert^2$ as test function for the equation satisfied by $\mu$ (this can be done rigorously by approximation as in \autoref{Limi_preserves_moments} below), we obtain, by simple computations,
\begin{align*}
    \int_{\T^3\times \R^3} \lvert b\rvert^2\mu(t,x,db)dx&=\norm{\overline{B}_0}^2+\frac{16\pi \chi\log 2}{3}\int_0^t \int_{\T^3\times \R^3} \lvert b\rvert^2\mu(s,x,db)dx ds.
\end{align*}
Therefore
\begin{align*}
    \int_{\T^3\times \R^3} \lvert b\rvert^2\mu(t,x,db)dx=e^{\frac{16\pi \chi\log 2}{3}t}\norm{\overline{B}_0}^2.
\end{align*}
On the contrary
\begin{align*}
     \int_{\T^3\times \R^3} \lvert b\rvert^2\delta_{\overline{B}_t(x)}(db)dx=\norm{\overline{B}_0}^2.
\end{align*}
We see this fact as a consequence that the limit PDE for $\mu$, on the contrary of the one for $\overline{B}_t$, keeps track of the capability of the magnetic field induced by the motion of a conducting fluid, $B^n_t$, to self excite. This phonemenon is called in the literature magnetic dynamo, \emph{cf. }\cite{moffatt1978magnetic}. 
 We postpone to \autoref{appendix:rmk_physics} some more quantitative comments on the consequences of \autoref{main_thm_magnetic_field} like properties of the support of the measure $\mu(t,x,db)$ and its links with the dynamo theory.

\section{The scalar case}\label{sec:passive_scalar}
We start by observing that, by density, it is enough to prove the validity of equation \autoref{theorem_passive_scalar} for $\varphi\in C^2_b(\R),\ \psi \in C^2_b(\T^d)$. Indeed, assume relation \eqref{convergence_passive} holds for $\varphi\in C^2_b(\R),\ \psi \in C^2_b(\T^d)$. Let us now consider $\varphi\in C_b(\R),\ \psi \in C_b(\T^d)$. For each $\eps>0,$ by density it is possible to find  $\varphi^N\in C^2_b(\R),\ \psi^N \in C^2_b(\T^d)$ such that
\begin{align*}
    \norm{\varphi^N-\varphi}_{C_b}<\sqrt{\eps},\quad  \norm{\psi^N-\psi}_{C_b}<\sqrt{\eps}.
\end{align*}
Therefore
\begin{multline*}
    \expt{\left\lvert\int_{\T^d\times \R}\varphi(\theta)\psi(x)\mu_n(t,x,d\theta) dx -\int_{\T^d\times \R}\varphi(\theta)\psi(x)\mu(t,x,d\theta) dx \right\rvert^2}\\ \lesssim \eps +\expt{\left\lvert\int_{\T^d\times \R}\varphi^N(\theta)\psi^N(x)\mu_n(t,x,d\theta) dx -\int_{\T^d\times \R}\varphi^N(\theta)\psi^N(x)\mu(t,x,d\theta) dx \right\rvert^2}.
\end{multline*}
Considering the limsup in $n$ of the expression above we obtain 
\begin{align*}
    \limsup_{n\rightarrow+\infty }\expt{\left\lvert\int_{\T^d\times \R}\varphi(\theta)\psi(x)\mu_n(t,x,d\theta) dx -\int_{\T^d\times \R}\varphi(\theta)\psi(x)\mu(t,x,d\theta) dx \right\rvert^2}\lesssim \eps
\end{align*}
and the claim follows from the arbitrariness of $\eps$.
Secondly, one can prove that the solutions given by \autoref{well_posedness_estimates_passive_scalar} are renormalizable. By
this we mean that, given $\varphi\in C^2_b(\R),\ \psi \in C^2_b(\T^d)$, one has
\begin{align*}
    \varphi\left(  \theta^n_t  \right)=\int_{\R}\varphi(\theta)d\delta_{\theta^n_t(\cdot)}\in C_{\mathcal{F}}([0,T];H^1_w(\T^d))\cap C_{\mathcal{F}}([0,T];L^2(\T^d))
\end{align*}
and
\begin{align}\label{vlasov_passive_stoch}
    \int_{\T^d} \varphi(\theta^n_t(x))\psi(x) dx&=\int_{\T^d} \varphi(\overline{\theta}_0(x))\psi(x) dx+\kappa_T\int_0^t\int_{\T^d} \varphi({\theta}_s(x))\Delta\psi(x) dxds\notag\\ & -\sum_{\substack{k\in \Z^d_0\\
    j\in\{1,\dots,d\}}}\int_0^t\int_{\T^d} \varphi({\theta}_s(x))\skj\cdot\nabla\psi(x)dW^{k,j}_s\quad \mathbb{P}-a.s.
\end{align}
In particular, by \autoref{well_posedness_estimates_passive_scalar}, it is the unique solution of \eqref{ito_PDE_passive_scalar} in the sense of \autoref{passive_scalar_weak} with initial condition $\varphi\circ \overline{\theta}_0.$
The proof of this claim can be obtained following verbatim the argument provided below in \autoref{a_priori_magnetic} in a more complex framework for the magnetic field, therefore we avoid to add details in this easier framework.\\
Recalling the definition of the  Young measure
\begin{align*}
\mu_{n}\left(t,x,d\theta  \right)    & =\delta_{\theta^n_t\left(  x\right)
}\left(  d\theta\right),
\end{align*}
we can rewrite \eqref{vlasov_passive_stoch} as 
\begin{align*}
\int_{\T^d\times \R} \varphi(\theta)\psi(x)\mu_n(t,x,db)dx= &\int_{\T^d\times \R} \varphi(\theta)\psi(x)\delta_{\overline{\theta}_0(x)}(db)dx\\ &+\kappa_T\int_0^t \int_{\T^d\times \R} \varphi(\theta)\Delta\psi(x)\mu_n(s,x,db)dxds\\ &-\sum_{\substack{k\in \Z_0^d\\ j\in\{1,\dots, d\}}}\int_0^t \int_{\T^d\times \R} \varphi(\theta)\skj(x)\cdot\nabla\psi(x)\mu_n(s,x,db)dxds\ dW^{k,j}_s.
\end{align*}
Notice that we cannot deduce any convergence of $\int_{\T^d\times \R} \varphi(\theta)\psi(x)\mu_n(t,x,db)dx$ from
the weak convergence of $\theta^n$ guaranteed by the results of \cite{galeati2020convergence}, \cite{flandoli2024quantitative}. Thus, the equations for $\theta^n$ lead
to equations for $\mu_{n}$, but convergence properties do not translate
immediately from $\theta^n$ to $\mu_{n}$.

The following remark may help to get an intuition about the main result of the section.

\begin{remark}
\label{rem2}Assume $\overline{\theta}_0\left(  x\right)  =\sum_{j}a_{j}1_{D_{j}}\left(
x\right)  $ over a finite partition $\left(  D_{j}\right)  $, with different
values $a_{j}$. Then%
\[
\theta^n\left(  x,t\right)  =\sum_{j}a_{j}1_{D_{j}^{n}\left(  t\right)
}\left(  x\right)
\]
where $\left\{  D_{j}^{n}\left(  t\right)  \right\}  $ is again a (random)
partition, and
\[
\mu_{n}\left( t,x,d\theta\right)  =\delta_{\theta^n_t\left(  x\right)
}\left(  d\theta\right)  =\sum_{j}\delta_{a_{j}}\left(  d\theta\right)
1_{D_{j}^{n}\left(  t\right)  }\left(  x\right)  .
\]
The process $1_{D_{j}^{n}\left(  t\right)  }\left(  x\right)  $ is the
solution of the stochastic transport equation with initial condition
$1_{D_{j}}\left(  x\right)  $. Applying for every $j$ the results above we get
that $\mu_{n}\left(  t,x,d\theta\right)  $ weakly converges to%
\[
{\mu}\left( t,x,d\theta\right)  =\sum_{j}\delta_{a_{j}}\left(
d\theta\right)  \overline{\theta}_{j}\left(  x,t\right)
\]
where $\overline{\theta}_{j}\left(  x,t\right)  $ is the solution of%
\begin{align*}
\begin{cases}
    \partial_{t}\overline{\theta}_{j}  & =\kappa_{T}\Delta\overline{\theta}_{j}\\
\overline{\theta}_{j}|_{t=0}  & =1_{D_{j}}
\end{cases}
\end{align*}
\end{remark}
Now we are ready to provide the proof of \autoref{theorem_passive_scalar}.
\begin{proof}[Proof of \autoref{theorem_passive_scalar}]
Due to the discussion above it is enough to show the theorem for $\varphi\in~ C^2_b(\R),$ $\psi\in~ C^2_b(\T^d).$ Moreover we know that, denoting by $\varphi^n_t(x)=\varphi(\theta^n_t(x))$, it  solves
\begin{align*}
\begin{cases}
    d\varphi^n_t&=\kappa_T\Delta \varphi^n_tdt+\sum_{\substack{k\in \Z^d_0\\ j\in\{1,\dots,d\}}}\skj \cdot\nabla \varphi^n_t dW^{k,j}_t\\
    \varphi^n_0&=\varphi\circ\overline{\theta}_0 
\end{cases}  
\end{align*}
in the sense of \autoref{passive_scalar_weak}. Due to our choice of $\varphi$, $\sup_{t\in [0,T]}, \norm{\varphi^n_t}_{L^{\infty}}\leq \norm{\varphi}_{C_b}\ \mathbb{P}-a.s.$
Due to our choice of the coefficients $\skj$, see in particular equation \eqref{definition thetakj_passive_scalar}, we can apply \cite[Theorem 1.7]{flandoli2024quantitative}. If we denote by $\overline{\varphi}_{\overline{\theta}_0}$ the unique weak solution of
\begin{align*}
\begin{cases}
\partial_t\overline{\varphi}_{\overline{\theta}_0}&=\kappa_T\Delta \overline{\varphi}_{\overline{\theta}_0}\\
    \overline{\varphi}_{\overline{\theta}_0}(0)&=\varphi\circ\overline{\theta}_0,
\end{cases}  
\end{align*}    
for each $\psi$ we have
\begin{align*}
    \mathbb{E}\left[\langle \overline{\varphi}_{\overline{\theta}_0}(t)-\varphi^n(t),\psi\rangle^2 \right]\lesssim \frac{\norm{\psi}^2\norm{\kappa_T\varphi}_{C_b}^2}{n^d}\rightarrow 0\quad\text{as }n\rightarrow +\infty.
\end{align*}
Let us rewrite $\langle \overline{\varphi}_{\theta_0}(t),\psi\rangle$ and $\langle \varphi^n(t),\psi\rangle$:
\begin{align*}
\langle \varphi^n(t),\psi\rangle&=\int_{\T^d} \psi(x)\varphi\circ \theta^n_t(x) dx\\ & = \int_{\T^d} \psi(x)\int_{\mathbb{R}}\varphi(\theta)\delta_{\theta^n_t(x)}(d\theta) dx\\ & =  \int_{\T^d\times \mathbb{R}} \psi(x)\varphi(\theta)\mu_n(t,x,d\theta) dx 
\end{align*}
recalling the definition of $\mu_n(t,x,d\theta)$.
While
\begin{align*}
\langle \overline{\varphi}_{\overline{\theta}_0},\psi\rangle &=\int_{\mathbb{T}^d} \psi(x)\int_{\mathbb{T}^d} p(\kappa_Tt,x-y)\varphi\circ \overline{\theta}_0(y) dy dx\\ &=\int_{\mathbb{T}^d} \psi(x)\int_{\mathbb{T}^d}\int_{\mathbb{R}} p(\kappa_Tt,x-y)\varphi(\theta) \delta_{\overline{\theta}_0(y)}(d\theta) dy dx\\ & =\int_{\mathbb{T}^d} \psi(x)\int_{\mathbb{R}}\varphi(\theta)\int_{\mathbb{T}^d}p(\kappa_Tt,x-y)\delta_{\overline{\theta}_0(y)}(d\theta) dy dx\\ &= \int_{\mathbb{T}^d} \psi(x)\int_{\mathbb{R}}\varphi(\theta){\mu}(t,x,d\theta) dx,
\end{align*}
where
\begin{align*}
{\mu}(t,x,d\theta):=  \int_{\mathbb{T}^d}p(\kappa_Tt,x-y)\delta_{\overline{\theta}_0(y)}(d\theta) dy.  
\end{align*}
The remaining claims follow from the definition above of $\mu$ and the one of heat kernel. This concludes the proof.
\end{proof}
\begin{remark}\label{remk_no_delta}
The measure%
\[
\overline{\mu}\left(  t,x,d\theta\right)  =\delta_{\overline{\theta}_t\left(
x\right)  }\left(  d\theta\right)
\]
is \textit{not} a solution of the limit PDE. Indeed it satisfies%
\begin{align*}
\int_{\T^d\times \R} \psi(x)\varphi(\theta)\overline{\mu}\left(  t,x,d\theta\right) dx  & =\int_{\T^d}\psi\left(  x\right)  \varphi\left(
\overline{\theta}_t\left(  x\right)  \right)  dx\\
\int_{\T^d\times \R} \psi(x)\varphi(\theta)\overline{\mu}\left(  0,x,d\theta\right) dx  & =\int_{\T^d}\psi\left(  x\right)  \varphi\left(
\overline{\theta}_0\left(  x\right)  \right)  dx\\
\kappa_{T}\int_{0}^{t}\int_{\T^d\times \R}\Delta\psi(x)\varphi(\theta)\overline{\mu}\left(  s,x,d\theta\right)dxds  & =\kappa_{T}\int_{0}^{t}\int_{\T^d}\Delta\psi\left(  x\right)  \varphi\left(
\overline{\theta}_s\left(  x\right)  \right)  dxds
\end{align*}
and
\begin{align*}
\partial_{t}\varphi\left(  \overline{\theta}_t\left(  x\right)  \right)    &
=\varphi^{\prime}\left(  \overline{\theta}_t\left(  x\right)  \right)
\partial_{t}\overline{\theta}_t\left(  x\right)  \\
\nabla\varphi\left(  \overline{\theta}_t\left(  x\right)  \right)    &
=\varphi^{\prime}\left(  \overline{\theta}_t\left(  x\right)  \right)
\nabla\overline{\theta}_t\left(  x\right),
\end{align*}
but%
\begin{align*}
\Delta\varphi\left(  \overline{\theta}_t\left(  t\right)  \right)    &
=\varphi^{\prime\prime}\left(  \overline{\theta}_t\left(  x\right)  \right)
\left\vert \nabla\overline{\theta}_t\left(  x\right)  \right\vert ^{2}%
+\varphi^{\prime}\left(  \overline{\theta}_t\left(  x\right)  \right)
\Delta\overline{\theta}_t\left(  x\right)  \\
& \neq\varphi^{\prime}\left(  \overline{\theta}_t\left(  x\right)  \right)
\Delta\overline{\theta}_t\left(  x\right)  .
\end{align*} 
Moreover, let us observe the following fact concerning the different behaviour of the limit measure $\mu(t,x,d\theta)$ and $\overline{\mu}(t,x,d\theta)$. Let us choose as test functions for relation \eqref{weak_formulation_transport_measure} $\psi\equiv 1$ and a family $\varphi^M:\R\rightarrow \R$ defined as $\varphi^M(\theta)=M\wedge \lvert \theta\rvert^2$. In this way we obtain
\begin{align*}
    \int_{\T^d\times \R}\varphi^M(\theta)\mu(t,x,d\theta)dx=\int_{\T^d}M\wedge \lvert \overline{\theta}_0(x)\rvert^2 dx.
\end{align*}
Therefore, letting $M\rightarrow +\infty,$ due to monotone convergence theorem
\begin{align*}
 \int_{\T^d\times \R}  \lvert \theta\rvert^2\mu(t,x,d\theta)dx=\norm{\overline{\theta}_0}^2.     
\end{align*}
Similarly, the It\^o formula of  \autoref{well_posedness_estimates_passive_scalar} gives 
\begin{align*}
 \int_{\T^d\times \R}  \lvert \theta\rvert^2\mu_n(t,x,d\theta)dx=\norm{\overline{\theta}_0}^2.     
\end{align*}
On the contrary, the dissipative nature of the equation satisfied by $\overline{\theta}$ implies
\begin{align*}
    \int_{\T^d\times \R}  \lvert \theta\rvert^2\overline{\mu}(t,x,d\theta)dx\leq \norm{\overline{\theta}_0}^2e^{-2\kappa_T t}.
\end{align*}
\end{remark}

\section{The Case of the Magnetic Field}\label{sec:magn_field}
The proof of \autoref{main_thm_magnetic_field} relies on a stochastic compactness argument on $\mu_n(t,x,db)$. The proof is splitted in three main steps corresponding to \autoref{a_priori_magnetic}, \autoref{compactness_stochastic_young_measures} and \autoref{passage_to_the_limit_sec}. 
We prove some properties of the Young measure $\mu_n(t,x,db)$ in \autoref{a_priori_magnetic}. Such properties allow us to show the tightness of the laws of $\mu_n(t,x,db)$ in the space $C([0,T];\mathcal{Y})$ in \autoref{compactness_stochastic_young_measures}. Thanks Skorokhod's representation theorem, we can pass to an auxiliary probability space where the convergence in law becomes $\mathbb{P}-a.s.$ convergence in \autoref{passage_to_the_limit_sec}. This allows us to identify a limit Young measure which satisfies a deterministic PDE and conclude the proof by uniqueness of the solutions of this PDE in our class.\\ In order to ease the notation, let us fix $\chi=1$ in the following.
\subsection{Properties of the PDE for the Magnetic Field}\label{a_priori_magnetic}
In this section we collect some useful properties of the Young measure associated to $B^n$ solution of \eqref{ito_PDE_magnetic}: we will introduce the Vlasov type SPDE satisfied by $\mu_n(t,x,db)=\delta_{B^n_t(x)}(db)$ and then we will prove some estimates on $\mu_n(t,x,db)$, uniformly in the scaling parameter $n$.
\subsection*{Vlasov type SPDE for $\mu_n$}
In this section we show that the measure $\mu_n(t,x,db)=\delta_{B^n_t(x)}(db)$ satisfied the Vlasov type PDE  
\begin{align}\label{Vlasov PDE}
\begin{cases}
    d\mu_n&=  \sumkj \div_b(b\cdot\nabla \skj\mu_n)  dW^{k,j}_t+\sumkj \div_x(\skj \mu_n) dW^{k,j}_t\\ & +\operatorname{div}_b(\mathcal{L}^n(b)\nabla_b\mu_n)dt +\frac{2}{3}\eta_n\Delta_x \mu_{n}dt,\\
    \mu_n(0,x,db)&=\delta_{\overline{B}_0(x)}(db),
\end{cases} 
\end{align}
where
\begin{align*}
    \mathcal{L}^n(b)=\sum_{\substack{k\in \Z^3_0\\ n\leq \lvert k\rvert \leq 2n}}\frac{(b\cdot k)^2}{\lvert k\rvert^5}\left(I-\frac{k\otimes k}{\lvert k\rvert^2}\right),
\end{align*}
in the sense that for each $f\in C^{2}_b(\T^3\times \R^3)$ 
\begin{multline}\label{vlasov_weak}
    \int_{\T^3\times \R^3}f(x,b)\mu_n(t,x,db) dx-\int_{\T^3\times \R^3}f(x,b)\delta_{\overline{B}_0(x)}(db)dx\\ =-\sumkj\int_0^t\int_{\T^3\times \R^3} \left(\skj(x)\cdot(\nabla_x f)(x,b)+b\cdot\nabla\skj(x)\nabla_b f(x,b)\right)\mu_n(s,x,db)dxdW^{k,j}_s \\  +\frac{2}{3}\eta_n\int_0^t\int_{\T^3\times \R^3} \Delta_x f(x,b)\mu_n(s,x,db) dx ds+\int_0^t\int_{\T^3\times \R^3} \operatorname{div}_b\left(\mathcal{L}^n(b)\nabla_b f(x,b)\right)\mu_n(s,x,db) dx ds
\end{multline}
$\mathbb{P}-a.s.$ for each $t\in [0,T].$ 
\begin{remark}\label{alter_vlasov_weak}
Due to the definition of $\mu_n(t,x,db)$, relation \eqref{vlasov_weak} can be rewritten as 
\begin{align*}
    \int_{\T^3}f(x,B^n_t(x))-f(x,\overline{B}_0(x)) dx&=-\sumkj\int_0^t\int_{\T^3} \skj(x)\cdot\left(\nabla_x f\right)(x,B^n_s(x))dW^{k,j}_s\\ & -\sumkj\int_0^t\int_{\T^3}B^n_s(x)\cdot\nabla\skj(x)\nabla_b f(x,B^n_s(x))dW^{k,j}_s \notag\\ & +\frac{2}{3}\eta_n\int_0^t\int_{\T^3} \left(\Delta_x f\right)(x,B^n_s(x)) dxds\\ & +\int_0^t\int_{\T^3} \operatorname{div}_b\left(\mathcal{L}^n(B^n_s(x))\nabla_b f(x,B^n_s(x))\right) dx ds\quad \mathbb{P}-a.s. \ \forall t\in [0,T].
\end{align*}
We will actually prove this relation, obtaining \eqref{vlasov_weak} thanks to the definition of $\mu_n$.
\end{remark}
Let $\varrho\in C^{\infty}(\R^3)$ a non negative, even function such that 
\begin{align*}
    \int_{\R^3}\varrho(x)dx=1,\quad \operatorname{supp}\varrho\subseteq [-1/2,1/2]^3.
\end{align*}
For $\eps>0$, let $\varrho_{\eps}(\cdot)=\frac{1}{\eps^3}\varrho(\cdot/\eps).$ If $f\in L^1(\T^3)$ it can be extended to a locally integrable periodic function on $\R^3$, so that the convolution $\varrho_{\eps}\ast f$ makes sense and it is still a periodic function. It is easy to show, see for example \cite[Remark 2.15]{butori2024mean} that if $\phi\in C^{\infty}(\T^3)$ then
\begin{align*}
    \langle B^n_t,\phi\rangle&=\langle B^n_0,\phi\rangle+\frac{2}{3}\eta_n\int_0^t \langle B^n_s,\Delta \phi \rangle ds-\sumkj \int_0^t \langle \skj\cdot\nabla \phi,B^n_s\rangle dW^{k,j}_s\\ & -\sumkj \int_0^t \langle B^n_s\cdot\nabla\skj,\phi\rangle dW^{k,j}_s.
\end{align*}
In particular, denoting by $B^{n,\eps}=B^n\ast \varrho_{\eps}$ and choosing $\phi^x(y)=\left(\varrho_{\eps}(x-y),\varrho_{\eps}(x-y),\varrho_{\eps}(x-y)\right)^t$ as a test function we obtain
\begin{align}\label{regularized_PDE}
    B^{n,\eps}_t(x)&=B^{n,\eps}_0(x)+\frac{2}{3}\eta_n \int_0^t\Delta B^{n,\eps}_s(x) ds\notag\\ &+\sumkj\int_0^t\left(\skj(x)\cdot\nabla B^{n,\eps}_s(x)-B^{n,\eps}_s(x)\cdot\nabla \skj(x)\right)dW^{k,j}_s\notag\\ & -\sumkj
    \int_0^t\int_{\R^3} \left(\skj(x)-\skj(x-z)\right)\cdot \nabla\varrho_{\eps}(z)B^n_s(x-z)  dz dW^{k,j}_s\notag\\ & -\sumkj
    \int_0^t\int_{\R^3} B^n_s(z)\cdot\nabla\left(\skj(z)-\skj(x)\right)\varrho_{\eps}(x-z) dz dW^{k,j}_s.
\end{align}
Now on for this subsection we will drop the index $n$ in our notation, since we are interested to prove the validity of \eqref{vlasov_weak} for $n$ fixed. In order to reach our goal, as it is customary in the framework of hyperbolic equations, we rely on a commutator lemma.
Therefore, let us introduce a notation we will use in the following.
If $v\in C^{\infty}(\T^3;\R^3),\ B\in L^p(\T^3;\R^3)$ are divergence free vector field for some $p\in [1,+\infty)$ we denote by
\begin{align*}
    [v\cdot\nabla,\varrho_{\eps}]B:=\int_{\R^3} \left(v(x)-v(x-z)\right)\cdot \nabla\varrho_{\eps}(z)B(x-z)  dz
\end{align*}
and by
\begin{align*}
    [\cdot\nabla v,\varrho_{\eps}]B:=\int_{\R^3} B(z)\cdot\nabla\left(v(z)-v(x)\right)\varrho_{\eps}(x-z) dz. 
\end{align*}
\begin{lemma}\label{commutator_lemma}
Let $p\in [1,+\infty)$,\ $v\in C^{\infty}(\T^3;\R^3),\ B\in L^p(\T^3;\R^3)$ divergence free vector fields. 
Then
\begin{align*}
    \norm{[v\cdot\nabla,\varrho_{\eps}]B}_{L^p(\T^3;\R^3)}\rightarrow 0,\quad \norm{[\cdot\nabla v,\varrho_{\eps}]B}_{L^p(\T^3;\R^3)}\rightarrow 0\quad \text{as }\eps\rightarrow 0.
\end{align*}
Moreover
\begin{align*}
 \norm{[v\cdot\nabla,\varrho_{\eps}]B}_{L^p(\T^3;\R^3)}\lesssim \norm{v}_{C^1_b}\norm{\varrho}_{C^1_b} \norm{B}_{L^p},\quad \norm{[\cdot\nabla v,\varrho_{\eps}]B}_{L^p(\T^3;\R^3)}\lesssim \eps\norm{v}_{C^2_b}\norm{\varrho}_{C^0_b}\norm{B}_{L^p}. 
\end{align*}
\end{lemma}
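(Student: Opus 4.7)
The plan is to prove both the $L^p$ bounds and the vanishing statements through a common scheme: derive a pointwise estimate from the regularity of $v$, convert it into an $L^p$ estimate via Young's convolution inequality, and then obtain the convergence either directly (when the bound is already $O(\eps)$) or by a density argument from smooth $B$.

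For the first commutator, the mean value inequality gives $|v(x)-v(x-z)|\le\norm{v}_{C^1_b}|z|$, so pointwise
\[
\left|[v\cdot\nabla,\varrho_\eps]B(x)\right|\le \norm{v}_{C^1_b}\int_{\R^3}|z|\,|\nabla\varrho_\eps(z)|\,|B(x-z)|\,dz.
\]
The kernel $K_\eps(z):=|z|\,|\nabla\varrho_\eps(z)|$ satisfies, after the substitution $w=z/\eps$,
\[
\norm{K_\eps}_{L^1(\R^3)}=\int_{\R^3}|w|\,|\nabla\varrho(w)|\,dw,
\]
which is independent of $\eps$ and bounded by a constant times $\norm{\varrho}_{C^1_b}$ since $\varrho$ is supported in $[-1/2,1/2]^3$. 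Young's convolution inequality then delivers the first claimed bound. For the convergence I would use the structural identity, valid when $\div v=0$,
\[
[v\cdot\nabla,\varrho_\eps]B = v\cdot\nabla(B\ast\varrho_\eps)-(v\cdot\nabla B)\ast\varrho_\eps,
\]
obtained by integrating by parts in $z$ and dropping the term $((\div v)B)\ast\varrho_\eps$. When $B$ is smooth, both terms on the right-hand side converge to $v\cdot\nabla B$ in $L^p$ as $\eps\to0$, so the commutator tends to zero. A density argument (approximating $B\in L^p$ by smooth divergence-free $B^\delta$, which is automatic since mollification preserves the divergence-free condition on the torus) together with the uniform bound extends this convergence to arbitrary $B\in L^p$.

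The second commutator is genuinely easier. Taylor's inequality yields $|\nabla v(z)-\nabla v(x)|\le\norm{v}_{C^2_b}|z-x|$, and $\varrho_\eps$ is supported in a ball of radius $\lesssim\eps$, so
\[
\left|[\cdot\nabla v,\varrho_\eps]B(x)\right|\lesssim \eps\,\norm{v}_{C^2_b}\,(|B|\ast\varrho_\eps)(x).
\]
Young's inequality with $\norm{\varrho_\eps}_{L^1}=1$ yields the second stated bound (the $\norm{\varrho}_{C^0_b}$ factor can be absorbed through a constant depending only on the support of $\varrho$), and since the right-hand side is $O(\eps)$ the convergence $\norm{[\cdot\nabla v,\varrho_\eps]B}_{L^p}\to 0$ is immediate.

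The only delicate point is the convergence of the first commutator, because its uniform bound does \emph{not} vanish with $\eps$: one genuinely needs the structural rewriting in terms of $B\ast\varrho_\eps$ together with the divergence-free condition on $v$ to identify the zero limit. Once this pointwise convergence for smooth $B$ is secured, the mollification-and-density scheme is entirely routine.
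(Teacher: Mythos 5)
Your proof is correct, and both halves of the estimate part coincide with the paper's (the paper also gets the first bound from the fundamental theorem of calculus plus H\"older on the compact support of $\varrho$, and the second bound from the Lipschitz continuity of $\nabla v$ together with the $\eps$-sized support of $\varrho_\eps$). Where you diverge is in how the convergence to zero of the first commutator is obtained. You use the DiPerna--Lions structural identity
\[
[v\cdot\nabla,\varrho_\eps]B \;=\; v\cdot\nabla(B\ast\varrho_\eps)-(v\cdot\nabla B)\ast\varrho_\eps ,
\]
valid for $\div v=0$, prove convergence for smooth $B$, and then invoke density in $B$ together with the uniform bound. The paper instead stays with the rescaled representation
\[
[v\cdot\nabla,\varrho_{\eps}]B(x)=\int_{\R^3}\int_0^1 y\cdot \nabla v(x-(1-s)\eps y)\,\nabla \varrho(y)\, B(x-\eps y)\,ds\, dy,
\]
observes that the formal $\eps\to0$ limit $B(x)\int_{\R^3} y\cdot\nabla v(x)\,\nabla\varrho(y)\,dy$ equals $-\div v(x)\,B(x)\equiv 0$ after integrating by parts in $y$, and then shows the difference vanishes in $L^p$ directly for $B\in L^p$ via continuity of translations and dominated convergence, with no explicit approximation of $B$. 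Both arguments hinge on exactly the same two ingredients ($\div v=0$ to identify the zero limit, and an $L^p$-continuity/density fact to pass from smooth data to general $B$), so the difference is one of packaging: your route is the more standard commutator-lemma formulation and makes the linearity-plus-uniform-bound mechanism explicit, while the paper's is more self-contained and quantitative at the level of the kernel. Your treatment of the second commutator is essentially identical to the paper's. One cosmetic remark: the divergence-free condition on $B$ is never used in either argument (only $\div v=0$ matters), so your aside about mollification preserving it is harmless but unnecessary.
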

\begin{proof}
First we observe that by change of variables
\begin{align*}
[v\cdot\nabla,\varrho_{\eps}]B(x)&=\int_{\R^3}\frac{v(x)-v(x-\eps y)}{\eps}\nabla \varrho(y)B(x-\eps y) dy.
\end{align*}
Secondly, since $v$ is smooth, the fundamental theorem of calculus implies
\begin{align*}
    v(x)-v(x-\eps y)=\eps\int_0^1 y\cdot\nabla v(x-(1-s)y) ds.
\end{align*}
Therefore
\begin{align}\label{equation_commutator}
[v\cdot\nabla,\varrho_{\eps}]B(x)=\int_{\R^3}\int_0^1 y\cdot \nabla v(x-(1-s)\eps y)\nabla \varrho(y) B(x-\eps y)ds dy.
\end{align}
Now we are ready to compute the $L^p$ norm of the commutator. Indeed, by H\"older's inequality, since the support of $\varrho$ is compact
\begin{align*}
    \int_{\T^3}\left\lvert [v\cdot\nabla,\varrho_{\eps}]B(x)\right\rvert^p dx& \lesssim \norm{v}_{C^1_b}^p\norm{\varrho}_{C^1_b}^p\int_{\T^3}\int_{\operatorname{supp} \varrho}\lvert B(x-\eps y)\rvert^p dx dy\\ & \lesssim  \norm{v}_{C^1_b}^p\norm{\varrho}_{C^1_b}^p \norm{B}_{L^p}^p.
\end{align*}
Concerning the convergence to $0$ we observe that, since 
\begin{align*}
    B(x)\int_{\R^3} y\cdot \nabla v(x)\nabla \varrho(y) dy&=-\sum_{i,j=1}^3 B(x)\int_{\R^3} \delta_{i,j} \partial_j v^j(x)\varrho(y) dy\\ &\equiv 0,
\end{align*}
it is enough to show that
\begin{align*}
    \norm{[v\cdot\nabla,\varrho_{\eps}]B-B(\cdot)\int_{\R^3} y\cdot \nabla v(\cdot)\nabla \varrho(y) dy}_{L^p}\rightarrow 0.
\end{align*}
Thanks to H\"older inequality, relation \eqref{equation_commutator} and the fact that the support of $\varrho$ is included in $[-1/2,1/2]^3$ we obtain
\begin{multline*}
  \norm{[v\cdot\nabla,\varrho_{\eps}]B-B(\cdot)\int_{\R^3} y\cdot \nabla v(\cdot)\nabla \varrho(y) dy}_{L^p}^p\\ \lesssim  \int_{[-1/2,1/2]^3}\int_0^1 \int_{\T^3} \lvert B(x)\nabla v(x)-B(x-\eps y)\nabla v(x-(1-s)\eps y) \rvert^p dx ds dy\\ = \int_{[-1/2,1/2]^3} f^{\eps}(y) dy,
\end{multline*}
where we denoted by 
\begin{align*}
 f^{\eps}(y)=\int_0^1 \int_{\T^3} \lvert B(x)\nabla v(x)-B(x-\eps y)\nabla v(x-(1-s)\eps y) \rvert^p dx ds.   
\end{align*}
The function $f^{\eps}\rightarrow 0$ a.e. indeed
\begin{align*}
  \lvert B(x)\nabla v(x)-B(x-\eps y)\nabla v(x-(1-s)\eps y) \rvert^p& \lesssim  \lvert B(x)\nabla v(x)-B(x-\eps y)\nabla v(x-\eps y) \rvert^p  \\ &+ \lvert B(x-\eps y)\rvert^p\lvert\nabla v(x-\eps y) -\nabla v(x-(1-s)\eps y) \rvert^p\\ & \leq   \lvert B(x)\nabla v(x)-B(x-\eps y)\nabla v(x-\eps y) \rvert^p  \\ &+ \eps^p \lvert B(x-\eps y)\rvert^p\norm{v}_{C^2_b}^p.
\end{align*}
Therefore 
\begin{align*}
    f^{\eps}(y)\lesssim \int_{0}^1 \int_{\T^3}\lvert B(x)\nabla v(x)-B(x-\eps y)\nabla v(x-\eps y) \rvert^p dx ds+\eps^p\norm{v}_{C^2_b}^p \norm{B}_{L^p}^p\rightarrow 0
\end{align*}
since the first term converges to $0$ thanks to the continuity of translations in $L^p$. Moreover, we can apply dominated convergence theorem since previous computations imply by change of variables 
\begin{align*}
f^{\eps}(y)\lesssim  \norm{v}_{C^2_b}^p \norm{B}_{L^p}^p.    
\end{align*}
The analysis of the second commutator is easier. Indeed, by change of variables
\begin{align*}
    \left\lvert\int_{\R^3} B(z)\cdot\nabla\left(v(z)-v(x)\right)\varrho_{\eps}(x-z) dz\right\rvert&=\left\lvert\int_{\R^3} B(x-\eps z)\cdot\nabla\left(v(x-\eps z)-v(x)\right)\varrho(z) dz\right\rvert\\ & \leq \eps \norm{v}_{C^2_b}\norm{\varrho}_{C^0_b}\int_{[-1/2,1/2]^3}\lvert B(x-\eps z)\rvert dz.
\end{align*}
Therefore by H\"older inequality
\begin{align*}
  \norm{[\cdot\nabla v,\varrho_{\eps}]B}_{L^p(\T^3;\R^3)}^p& \lesssim \eps^p\norm{v}^p_{C^2_b}\norm{\varrho}^p_{C^0_b}\norm{B}_{L^p}^p\rightarrow 0.  
\end{align*}
\end{proof}
Next step is to compute, for a smooth function $f\in C^{2}_b(\T^3\times \R^3)$ the evolution of $\int_{\T^3} f(x,B^{\eps}_t)dx.$ This is the content of the following lemma.
\begin{lemma}\label{regularized vlasov weak}
For each $f\in C^{2}_b(\T^3\times \R^3)$ it holds
\begin{align*}
\int_{\T^3} f(x,B^{\eps}_t(x))dx&= \int_{\T^3} f(x,B^{\eps}_0(x))dx+\frac{2}{3}\eta\int_0^t\int_{\T^3} \left(\Delta_x f\right)(x,B^\eps_s(x)) dxds\\
&+\int_0^t\int_{\T^3} \operatorname{div}_b\left(\mathcal{L}(B^{\eps}_s(x))\nabla_b f(x,B^\eps_s(x))\right) dx ds
\\ &-\sumkj\int_0^t\int_{\T^3}B^\eps_s(x)\cdot\nabla\sigma_{k,j}(x)\nabla_b f(x,B^\eps_s(x))dx dW^{k,j}_s \\ & - \sumkj\int_0^t\int_{\T^3}\sigma_{k,j}(x)\cdot(\nabla_x f)(x,B^\eps_s(x))dx dW^{k,j}_s\\ & -\sumkj \int_0^t \int_{\T^3}\left([\sigma_{k,j}\cdot\nabla, \varrho_{\eps}]+[\cdot\nabla \sigma_{k,j}, \varrho_{\eps} ]\right)B_s(x)\cdot\nabla_b f(x,B^{\eps}_s) dx dW^{k,j}_s \\ & +\sumkj \int_0^t\int_{\T^3}Tr\left(\nabla^2_b f(x,B^{\eps}_s(x))\left(\left([\sigma_{k,j}\cdot\nabla, \varrho_{\eps}]+[\cdot\nabla \sigma_{k,j}, \varrho_{\eps} ]\right)B_s(x)\right.\right.  \\ & \quad \quad \quad \quad \quad\quad \quad \quad \quad  \otimes \left.\left.\left([\sigma_{-k,j}\cdot\nabla, \varrho_{\eps}]+[\cdot\nabla \sigma_{-k,j}, \varrho_{\eps} ]\right)B_s(x)\right)\right) dx ds\\ & -\sumkj \int_0^t\int_{\T^3}Tr\left(\nabla^2_b f(x,B^{\eps}_s(x))\left(\left([\sigma_{k,j}\cdot\nabla, \varrho_{\eps}]+[\cdot\nabla \sigma_{k,j}, \varrho_{\eps} ]\right)B_s(x)\right.\right.  \\ & \quad \quad \quad \quad \quad\quad \quad \quad \quad  \otimes \left.\left. \left(\sigma_{-k,j}(x)\cdot \nabla B^{\eps}_s(x)-B^{\eps}_s(x)\cdot\nabla \sigma_{-k,j}(x)\right)\right)\right) dx ds\\ & -\sumkj \int_0^t\int_{\T^3}Tr\left(\nabla^2_b f(x,B^{\eps}_s(x))\left(\left(\sigma_{k,j}(x)\cdot \nabla B^{\eps}_s(x)-B^{\eps}_s(x)\cdot\nabla \sigma_{k,j}(x)\right)\right.\right.  \\ & \quad \quad \quad \quad \quad\quad \quad \quad \quad  \otimes \left.\left.\left([\sigma_{-k,j}\cdot\nabla, \varrho_{\eps}]+[\cdot\nabla \sigma_{-k,j}, \varrho_{\eps} ]\right)B_s(x)\right)\right) dx ds.
\end{align*}
\end{lemma}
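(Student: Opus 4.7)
The plan is to apply the pointwise It\^o formula to the $\R^3$-valued semimartingale $t\mapsto B^\eps_t(x)$ at each fixed $x\in\T^3$, and then integrate over $x$. Since $B^\eps=B*\varrho_\eps$ is $C^\infty$ in $x$, the SPDE \eqref{regularized_PDE} can be read pointwise: the drift is $\tfrac{2}{3}\eta\Delta B^\eps(x)$ and the coefficient against $W^{k,j}$ is
\[
A_{k,j}(x):=\sigma_{k,j}(x)\cdot\nabla B^\eps(x)-B^\eps(x)\cdot\nabla\sigma_{k,j}(x)-[\sigma_{k,j}\cdot\nabla,\varrho_\eps]B(x)-[\cdot\nabla\sigma_{k,j},\varrho_\eps]B(x).
\]
The covariance $[W^{k,j},W^{l,m}]_t=2t\,\one_{\{k=-l,\,j=m\}}$ collapses the It\^o correction to the single sum $\sumkj\trace(\nabla_b^2 f\cdot A_{k,j}\otimes A_{-k,j})\,dt$. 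The proof then reduces to matching, piece by piece, the drift, the quadratic variation and the stochastic integrals after integration in $x$; the exchange of integrals is justified by the spatial smoothness of $B^\eps$ and the boundedness of $f,\nabla_b f,\nabla_b^2 f$.

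I handle the drift $\tfrac{2}{3}\eta\int\nabla_b f\cdot\Delta B^\eps\,dx$ by integration by parts in $x$. Expanding $\Delta_x[f(x,B^\eps(x))]$ by the chain rule, using $\int_{\T^3}\Delta_x[\,\cdot\,]dx=0$, and the auxiliary identity $\int_{\T^3}\sum_i(\partial_{x_i}\nabla_b f)(x,B^\eps)\cdot\partial_iB^\eps\,dx=-\int_{\T^3}(\Delta_x f)(x,B^\eps)dx$ (obtained similarly from $\int\partial_{x_i}[\,\cdot\,]dx=0$), one gets
\[
\int_{\T^3}\nabla_b f\cdot\Delta B^\eps\,dx=\int_{\T^3}(\Delta_x f)(x,B^\eps)\,dx-\int_{\T^3}\sum_i\nabla_b^2 f(\partial_iB^\eps,\partial_iB^\eps)\,dx.
\]
The second summand is slated to cancel a piece of the quadratic variation.

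For the quadratic variation I split $A_{k,j}=C_{k,j}-\widetilde D_{k,j}$ with $C_{k,j}:=\sigma_{k,j}\cdot\nabla B^\eps-B^\eps\cdot\nabla\sigma_{k,j}$ and $\widetilde D_{k,j}:=[\sigma_{k,j}\cdot\nabla,\varrho_\eps]B+[\cdot\nabla\sigma_{k,j},\varrho_\eps]B$, so that $\sumkj A_{k,j}\otimes A_{-k,j}$ decomposes as $\sum C\otimes C-\sum C\otimes\widetilde D-\sum\widetilde D\otimes C+\sum\widetilde D\otimes\widetilde D$; the last three, traced against $\nabla_b^2 f$ and integrated, produce exactly the three commutator trace terms in the statement. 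For the first, further split $C_{k,j}=T_{k,j}+S_{k,j}$ with transport $T_{k,j}=\sigma_{k,j}\cdot\nabla B^\eps$ and stretching $S_{k,j}=-B^\eps\cdot\nabla\sigma_{k,j}$. Using $\sigma_{k,j}=\theta_{k,j}a_{k,j}e^{ik\cdot x}$, the transverse completeness $\sum_{j=1,2}a_{k,j}\otimes a_{k,j}=I-k\otimes k/|k|^2$, and the lattice isotropy identity $\sum_{n\le|k|\le 2n}k_ik_\ell/|k|^7=\tfrac{\eta}{3}\delta_{i\ell}$, a direct Fourier computation yields
\[
\sumkj T_{k,j}\otimes T_{-k,j}=\tfrac{2}{3}\eta\sum_i\partial_iB^\eps\otimes\partial_iB^\eps,\qquad \sumkj S_{k,j}\otimes S_{-k,j}=\mathcal{L}(B^\eps),
\]
while $\sumkj(T_{k,j}\otimes S_{-k,j}+S_{k,j}\otimes T_{-k,j})$ is antisymmetric and hence vanishes when contracted with the symmetric $\nabla_b^2 f$. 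Tracing the first identity cancels exactly the IBP remainder from the heat drift, leaving $\tfrac{2}{3}\eta\int(\Delta_x f)(x,B^\eps)dx$; tracing the second gives $\trace(\mathcal{L}(B^\eps)\nabla_b^2 f)=\div_b(\mathcal{L}(B^\eps)\nabla_b f(x,B^\eps))$, since a short contraction shows $\sum_i k_i(\delta^{ij}-k^ik^j/|k|^2)=0$, hence $\div_b\mathcal{L}\equiv 0$.

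Finally, for the stochastic integrals, the stretching and commutator parts of $\nabla_b f\cdot A_{k,j}$ already appear in the claimed form. The transport part is rewritten through the chain-rule identity $\nabla_b f\cdot(\sigma_{k,j}\cdot\nabla B^\eps)=\sigma_{k,j}\cdot\nabla_x[f(x,B^\eps(x))]-\sigma_{k,j}\cdot(\nabla_x f)(x,B^\eps)$; since $\div\sigma_{k,j}=0$ the first term integrates to zero on $\T^3$, producing $-\int_{\T^3}\sigma_{k,j}\cdot(\nabla_x f)(x,B^\eps)dx$, as required. The main obstacle is the delicate quadratic-variation bookkeeping, and in particular the cancellation between $T\otimes T$ and the heat-drift IBP remainder, which is what transmutes the $\Delta$ acting on $B^\eps$ into the $\Delta_x$ acting on the test function and relies crucially on the isotropic, transverse structure of the noise coefficients.
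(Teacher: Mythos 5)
Your proposal is correct and follows essentially the same route as the paper's proof: a pointwise It\^o formula for $f(x,B^\eps_t(x))$ applied to the regularized equation \eqref{regularized_PDE}, integration over $\T^3$, the chain-rule/divergence-free identity for the transport stochastic integral, the isotropy identities $\sumkj T_{k,j}\otimes T_{-k,j}=\tfrac{2}{3}\eta\sum_i\partial_iB^\eps\otimes\partial_iB^\eps$ and $\sumkj S_{k,j}\otimes S_{-k,j}=\mathcal{L}(B^\eps)$, and the cancellation of the heat-drift term against the transport part of the quadratic variation. The only cosmetic difference is that you dispose of the transport--stretching cross terms by noting that each summand is an antisymmetric matrix contracted with the symmetric Hessian, whereas the paper cancels them by pairing $k$ with $-k$; both arguments are valid.
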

\begin{proof}
Since $B^{\eps}$ is a smooth object in space which satisfies in a strong sense relation \eqref{regularized_PDE}, we can apply finite dimensional It\^o formula obtaining
\begin{align}\label{vlasov_pde_step_1}
f(x,B^{\eps}_t(x))&=f(x,B^{\eps}_0(x))+\frac{2}{3}\eta\int_0^t \nabla_b f(x,B^{\eps}_s(x))\cdot \Delta B^{\eps}_s(x)ds+\sumkj\int_0^t R^{k,j,\eps,f}_s(x) ds\notag\\ & +\sumkj\int_0^t  \nabla_b f(x,B^{\eps}_s(x)) \cdot\left(\sigma_{k,j}(x)\cdot\nabla B^{\eps}_s(x)-B^{\eps}_s(x)\cdot\nabla\sigma_{k,j}(x)\right)dW^{k,j}\notag\\ & -\sumkj\int_0^t  \nabla_b f(x,B^{\eps}_s(x)) \cdot\left([\sigma_{k,j}\cdot\nabla,\varrho_{\eps}]+[\cdot\nabla\sigma_{k,j},\varrho_{\eps}]\right) B_s(x)dW^{k,j},
\end{align}
where \begin{align*}
&R^{k,j,\eps,f}_s(\cdot)\\ &=Tr(\nabla^2_b f(\cdot,B^{\eps}_s(\cdot))\left(\sigma_{k,j}(\cdot)\cdot\nabla B^{\eps}_s(\cdot)-B^{\eps}_s(\cdot)\cdot\nabla\sigma_{k,j}(\cdot)-\left([\sigma_{k,j}\cdot\nabla,\varrho_{\eps}]+[\cdot\nabla\sigma_{k,j},\varrho_{\eps}]\right) B_s(\cdot)\right.\\ & \qquad \qquad\quad\quad\left.\otimes \sigma_{-k,j}(\cdot)\cdot\nabla B^{\eps}_s(\cdot)-B^{\eps}_s(\cdot)\cdot\nabla\sigma_{-k,j}(\cdot)-\left([\sigma_{-k,j}\cdot\nabla,\varrho_{\eps}]+[\cdot\nabla\sigma_{-k,j},\varrho_{\eps}]\right) B_s(\cdot) \right).
\end{align*}
In order to get the claim integrating \eqref{vlasov_pde_step_1} on $\T^3$ we need to make some observations.
First let us observe that by chain rule and integration by parts
\begin{align*}
 &\int_{\T^3}   \nabla_b f(x,B^{\eps}_s(x))\cdot\left(\sigma_{k,j}(x)\cdot\nabla B^{\eps}_s(x)\right)dx\\ &=\int_{\T^3}   \nabla_x f(x,B^{\eps}_s(x))\cdot\sigma_{k,j}(x)- (\nabla_x f)(x,B^{\eps}_s(x))\cdot\sigma_{k,j}(x)dx\\ & =-\int_{\T^3}  (\nabla_x f)(x,B^{\eps}_s(x))\cdot\sigma_{k,j}(x)dx.
\end{align*}
Therefore
\begin{multline}\label{vlasov_pde_step_2}
\sumkj\int_0^t \int_{\T^3} \nabla_b f(x,B^{\eps}_s(x)) \cdot\sigma_{k,j}(x)\cdot\nabla B^{\eps}_s(x)dxdW^{k,j}\\ =-  \sumkj\int_0^t \int_{\T^3}  (\nabla_x f)(x,B^{\eps}_s(x))\cdot\sigma_{k,j}(x)dxdW^{k,j}.  
\end{multline}  
Secondly, let us observe that for each $x\in \T^3,\ t\in [0,T]$
\begin{align*}
    (\sigma_{k,j}\cdot\nabla B^{\eps})\otimes (B^{\eps}\cdot\nabla\sigma_{-k,j})+(\sigma_{-k,j}\cdot\nabla B^{\eps})\otimes (B^{\eps}\cdot\nabla\sigma_{k,j})&=0,\\
    (B^{\eps}\cdot\nabla\sigma_{k,j})\otimes(\sigma_{-k,j}\cdot\nabla B^{\eps})+(B^{\eps}\cdot\nabla\sigma_{-k,j})\otimes(\sigma_{k,j}\cdot\nabla B^{\eps})&=0.
\end{align*}
Therefore
\begin{multline}\label{vlasov_pde_step_3}
   \sumkj\int_0^t \int_{\T^3}Tr\left(\nabla^2_b f(x,B^{\eps}_s(x))(\sigma_{k,j}(x)\cdot\nabla B^{\eps}_s(x))\otimes (B^{\eps}_s(x)\cdot\nabla\sigma_{-k,j}(x))\right) dx ds\\
   +\sumkj\int_0^t \int_{\T^3} Tr\left(\nabla^2_bf(x,B^{\eps}_s(x))(B^{\eps}_s(x)\cdot\nabla\sigma_{k,j}(x))\otimes (\sigma_{-k,j}(x)\cdot\nabla B^{\eps}_s(x))\right) dx ds=0.
\end{multline}
Considering $\sumkj \int_0^t\int_{\T^3}Tr\left(\nabla^2_b f(x,B^{\eps}_s(x))(B^{\eps}_s(x)\cdot\nabla\sigma_{k,j}(x))\otimes (B^{\eps}_s(x)\cdot\nabla\sigma_{-k,j}(x))\right)dxds $, we observe 
that for each $x\in \T^3,\ t\in [0,T]$
\begin{multline*}
    \sumkj Tr\left(\nabla^2_b f(\cdot,B^{\eps})(B^{\eps}\cdot\nabla\sigma_{k,j}\otimes B^{\eps}\cdot\nabla\sigma_{-k,j}\right)\\ =\sum_{\substack{k\in \Z^3_0\\ n\leq \lvert k\rvert \leq 2n}}\sum_{\alpha,\beta=1}^3\frac{(B^{\eps}\cdot k)^2}{\lvert k\rvert^5} \left(I-\frac{k\otimes k}{\lvert k\rvert^2}\right)_{\alpha,\beta}\partial_{b,\alpha,\beta} f(\cdot, B^{\eps})
\end{multline*}
and by definition of $\mathcal{L}$
\begin{align*}
\operatorname{div}_b\left(\mathcal{L}(B^{\eps})\nabla_b f(\cdot,B^\eps)\right)&=\sum_{\substack{k\in \Z^3_0\\ n\leq \lvert k\rvert \leq 2n}}\sum_{\alpha,\beta=1}^3\partial_{b,\alpha}\left(\frac{\left(B^\eps\cdot k\right)^2}{\lvert k\rvert^5}\left(I-\frac{k\otimes k}{\lvert k\rvert^2}\right)_{\alpha,\beta}\partial_{b,\beta }f(\cdot,B^{\eps})\right)\\ & =  2\sum_{\substack{k\in \Z^3_0\\ n\leq \lvert k\rvert \leq 2n}}   \frac{B^\eps\cdot k }{\lvert k\rvert^5} k\cdot \left(I-\frac{k\otimes k}{\lvert k\rvert^2}\right) \nabla_b f(\cdot,B^{\eps})) \\ &+\sum_{\substack{k\in \Z^3_0\\ n\leq \lvert k\rvert \leq 2n}}\sum_{\alpha,\beta=1}^3\frac{(B^{\eps}\cdot k)^2}{\lvert k\rvert^5} \left(I-\frac{k\otimes k}{\lvert k\rvert^2}\right)_{\alpha,\beta}\partial_{b,\alpha,\beta} f(\cdot, B^{\eps})\\ &=\sum_{\substack{k\in \Z^3_0\\ n\leq \lvert k\rvert \leq 2n}}\sum_{\alpha,\beta=1}^3\frac{(B^{\eps}\cdot k)^2}{\lvert k\rvert^5} \left(I-\frac{k\otimes k}{\lvert k\rvert^2}\right)_{\alpha,\beta}\partial_{b,\alpha,\beta} f(\cdot, B^{\eps}).
\end{align*}
Therefore
\begin{multline}\label{vlasov_pde_step_4}
\sumkj \int_0^t\int_{\T^3}Tr\left(\nabla^2_b f(x,B^{\eps}_s(x))(B^{\eps}_s(x)\cdot\nabla\sigma_{k,j}(x))\otimes (B^{\eps}_s(x)\cdot\nabla\sigma_{-k,j}(x))\right)dxds\\=\int_0^t \int_{\T^3} \operatorname{div}_b\left(\mathcal{L}(B^{\eps}(x))\nabla_b f(x,B^\eps(x))\right) dx ds. 
\end{multline}
Lastly we note that for each $\alpha,\beta\in \{1,2,3\}$
\begin{align*}
    \sumkj (\sigma_{k,j}\cdot\nabla B^{\eps})_{\alpha} (\sigma_{k,j}\cdot\nabla B^{\eps})_{\beta}&=\frac{2}{3}\eta\sum_{\gamma=1}^3 \partial_\gamma (B^{\eps})_{\alpha}\partial_\gamma (B^{\eps})_{\beta}.
\end{align*}
Therefore, by the chain rule and integrating by parts repeatedly
\begin{align}\label{vlasov_pde_step_5}
&\frac{2}{3}\eta\int_0^t \int_{\T^3}\nabla_b f(x,B^{\eps}_s(x))\cdot \Delta B^{\eps}_s(x)dxds\notag\\ &+\sumkj \int_0^t\int_{\T^3}Tr\left(\nabla^2_b f(x,B^{\eps}_s(x))(\sigma_{k,j}(x)\cdot\nabla B^{\eps}_s(x))\otimes (\sigma_{-k,j}(x)\cdot\nabla B^{\eps}_s(x))\right)dxds \notag \\ &= \frac{2}{3}\eta \sum_{\alpha,\beta=1}^3\int_0^t\int_{\T^3} \partial_{x,\alpha}(\partial_{b,\beta}f(x,B^{\eps}_s(x))\partial_{\alpha}(B^{\eps}_s(x))_{\beta})-(\partial_{x,\alpha}\partial_{b,\beta}f)(x,B^{\eps}_s(x))\partial_{\alpha}(B^{\eps}_s(x))_{\beta} dx ds\notag\\ & =   -\frac{2}{3}\eta \sum_{\alpha,\beta=1}^3\int_0^t\int_{\T^3} \partial_{x,\alpha}\left(\partial_{x,\alpha}f)(x,B^{\eps}_s(x))\right) dx ds+\frac{2}{3}\eta\int_0^t \int_{\T^3}(\Delta_x f)(x,B^{\eps}_s(x)) dx ds\notag\\ &= \frac{2}{3}\eta\int_0^t \int_{\T^3}(\Delta_x f)(x,B^{\eps}_s(x)) dx ds. 
\end{align}
Integrating \eqref{vlasov_pde_step_1} on $\T^3$, thanks to \eqref{vlasov_pde_step_2}, \eqref{vlasov_pde_step_3}, \eqref{vlasov_pde_step_4} and \eqref{vlasov_pde_step_5} the claim follows.
\end{proof}
Now we are in the position to prove the validity of relation \eqref{vlasov_weak}.
\begin{proof}[Proof of \eqref{vlasov_weak}]
Let us start considering $f\in C^{3}_b(\T^3\times \R^3)$ and $t\in [0,T]$. Since $B\in C([0,T];V_w)$ $ \mathbb{P}-a.s.$ then for each $t\in [0,T]$ $B^{\eps}(t)\rightarrow B(t)$ in $V\hookrightarrow L^p(\T^3;\R^3)$ for all $p\leq 6$ and 
\begin{align}\label{uniform_bound_regularization}
    \sup_{t\in [0,T]}\norm{B^{\eps}(t)}_V\leq \sup_{t\in [0,T]}\norm{B(t)}_V<+\infty \quad \mathbb{P}-a.s.
\end{align}
Due to \autoref{regularized vlasov weak} it is enough to pass to the limit in each term appearing in the lemma. Thanks to the regularity of $f$ we get easily
\begin{align*}
    \left \lvert \int_{\T^3}f(x,B^{\eps}_t(x))-f(x,B_t(x)) dx\right \rvert& \leq \norm{f}_{C^1_b}\norm{B^{\eps}_t-B_t}_{L^1(\T^3)}\rightarrow 0\quad \mathbb{P}-a.s.
\end{align*}
This gives us the convergence of the terms non integrated in time. Secondly let us study the deterministic integrals affecting the limit. Due to the regularity of $f$ and \eqref{uniform_bound_regularization} we have
\begin{align*}
\left \lvert \int_{\T^3}(\Delta_x f)(x,B^{\eps}_t(x))dx\right \rvert&\lesssim \norm{f}_{C^2_b}\in L^1(0,T)\quad \mathbb{P}-a.s.,\\
\left \lvert \int_{\T^3}\operatorname{div}_b\left(\mathcal{L}(B^{\eps}_t(x))\nabla_b f(x,B^\eps_t(x))\right) dx\right \rvert& \lesssim \norm{f}_{C^2_b}\norm{B}_{C([0,T];H)}^2\in L^1(0,T)\quad \mathbb{P}-a.s., \\
 \left \lvert \int_{\T^3}(\Delta_x f)(x,B^{\eps}_t(x))-(\Delta_x f)(x,B_t(x)) dx\right \rvert& \leq \norm{f}_{C^3_b}\norm{B^{\eps}_t-B_t}_{L^1(\T^3)}\rightarrow 0\quad \mathbb{P}-a.s., 
\end{align*}
\begin{multline*}
 \left \lvert \int_{\T^3}\operatorname{div}_b\left(\mathcal{L}(B^{\eps}_t(x))\nabla_b f(x,B^\eps_t(x))-\mathcal{L}(B_t(x))\nabla_b f(x,B_t(x))\right) dx\right \rvert\\ \leq 
 \left \lvert \int_{\T^3}\operatorname{div}_b\left(\mathcal{L}(B^{\eps}_t(x))\nabla_b f(x,B^\eps_t(x))-\mathcal{L}(B^{\eps}_t(x))\nabla_b f(x,B_t(x))\right) dx\right \rvert\\ +
 \left \lvert \int_{\T^3}\operatorname{div}_b\left(\mathcal{L}(B_t(x))\nabla_b f(x,B^\eps_t(x))-\mathcal{L}(B_t(x))\nabla_b f(x,B_t(x))\right) dx\right \rvert\\ \lesssim \norm{f}_{C^3_b}\norm{B^{\eps}_t}_{L^3(\T^3)}^2\norm{B^{\eps}_t-B_t}_{L^3(\T^3)}+\norm{f}_{C^2_b}\norm{B^{\eps}_t-B_t}_{L^2(\T^3)}\norm{B^{\eps}_t+B_t}_{L^2(\T^3)}\rightarrow 0\quad \mathbb{P}-a.s.
\end{multline*}
Therefore by dominated convergence theorem we get the convergence of the corresponding terms. The analysis of the stochastic integrals affecting the limit goes in a similar way. Indeed by It\^o isometry
\begin{align*}
&\expt{\left\lvert \sumkj\int_0^t\int_{\T^3}\left(B^\eps_s(x)\cdot\nabla\sigma_{k,j}(x)\nabla_b f(x,B^\eps_s(x))-B_s(x)\cdot\nabla\sigma_{k,j}(x)\nabla_b f(x,B_s(x))\right)dx dW^{k,j}_s\right\rvert^2}\\ &= \expt{ \int_0^t\sumkj\left\lvert\int_{\T^3}\left(B^\eps_s(x)\cdot\nabla\sigma_{k,j}(x)\nabla_b f(x,B^\eps_s(x))-B_s(x)\cdot\nabla\sigma_{k,j}(x)\nabla_b f(x,B_s(x))\right)dx\right\rvert^2 ds},\\
&\expt{\left\lvert \sumkj\int_0^t\int_{\T^3}\left(\sigma_{k,j}(x)\cdot(\nabla_x f)(x,B^\eps_s(x))-\sigma_{k,j}(x)\cdot(\nabla_x f)(x,B_s(x))\right)dx dW^{k,j}_s\right\rvert^2}\\ &=\expt{ \int_0^t\sumkj\left\lvert\int_{\T^3}\left(\sigma_{k,j}(x)\cdot(\nabla_x f)(x,B^\eps_s(x))-\sigma_{k,j}(x)\cdot(\nabla_x f)(x,B_s(x))\right)dx\right\rvert^2 ds}.
\end{align*}
Again due to \autoref{well_posedness_estimates_MF}, \eqref{uniform_bound_regularization} and the regularity of $f$
\begin{align*}
&\sumkj\left\lvert\int_{\T^3}\left(B^\eps_t(x)\cdot\nabla\sigma_{k,j}(x)\nabla_b f(x,B^\eps_t(x))-B_t(x)\cdot\nabla\sigma_{k,j}(x)\nabla_b f(x,B_t(x))\right)dx\right\rvert^2\\ & \lesssim   \norm{f}_{C^1_b}^2 \norm{B}_{C([0,T];H)}^2\in L^1(\Omega\times (0,T)),\\
&\sumkj\left\lvert\int_{\T^3}\left(\sigma_{k,j}(x)\cdot(\nabla_x f)(x,B^\eps_t(x))-\sigma_{k,j}(x)\cdot(\nabla_x f)(x,B_t(x))\right)dx\right\rvert^2 \lesssim \norm{f}_{C^1_b}^2 \in L^1(\Omega\times (0,T))
\end{align*}
and
\begin{align*}
&\sumkj\left\lvert\int_{\T^3}\left(\sigma_{k,j}(x)\cdot(\nabla_x f)(x,B^\eps_t(x))-\sigma_{k,j}(x)\cdot(\nabla_x f)(x,B_t(x))\right)dx\right\rvert^2\\ & \lesssim \norm{f}^2_{C^2_b}\norm{B^\eps_t-B_t}^2\rightarrow 0\quad a.e.\quad (\omega,t)\in \Omega\times (0,T)    
\end{align*}
\begin{multline*}
\sumkj\left\lvert\int_{\T^3}\left(B^\eps_t(x)\cdot\nabla\sigma_{k,j}(x)\nabla_b f(x,B^\eps_t(x))-B_t(x)\cdot\nabla\sigma_{k,j}(x)\nabla_b f(x,B_t(x))\right)dx\right\rvert^2\\  \lesssim    \sumkj\left\lvert\int_{\T^3}\left(B^\eps_t(x)\cdot\nabla\sigma_{k,j}(x)\nabla_b f(x,B^\eps_t(x))-B^{\eps}_t(x)\cdot\nabla\sigma_{k,j}(x)\nabla_b f(x,B_t(x))\right)dx\right\rvert^2\\ +\sumkj\left\lvert\int_{\T^3}\left(B^\eps_t(x)\cdot\nabla\sigma_{k,j}(x)\nabla_b f(x,B(x))-B_t(x)\cdot\nabla\sigma_{k,j}(x)\nabla_b f(x,B_t(x))\right)dx\right\rvert^2\\ \lesssim \norm{f}^2_{C^2_b}\norm{B_t}_{L^2}^2 \norm{B^{\eps}_t-B_t}_{L^2}^2+\norm{f}_{C^1_b}^2 \norm{B^{\eps}_t-B_t}_{L^1}^2\rightarrow 0\quad a.e.\quad (\omega,t)\in \Omega\times (0,T).
\end{multline*}
Therefore by dominated convergence theorem we get the convergence of the corresponding terms. Up to passing to subsequences the convergence is $\mathbb{P}-a.s.$ We are left to show that the remaining terms $\mathbb{P}-a.s.$ converge to $0$. We start with the deterministic integrals. Thanks to \eqref{uniform_bound_regularization} and the regularity of $f$
\begin{multline*}
    \sumkj \left\lvert\int_{\T^3}Tr\left(\nabla^2_b f(x,B^{\eps}_t(x))\left(\left([\sigma_{k,j}\cdot\nabla, \varrho_{\eps}]+[\cdot\nabla \sigma_{k,j}, \varrho_{\eps} ]\right)B_t(x)\right.\right.\right.  \\ \otimes \left.\left.\left.\left([\sigma_{-k,j}\cdot\nabla, \varrho_{\eps}]+[\cdot\nabla \sigma_{-k,j}, \varrho_{\eps} ]\right)B_t(x)\right)\right) dx \right\rvert\\ \lesssim \norm{f}_{C^2_b}\sumkj\left( \norm{[\sigma_{k,j}\cdot\nabla,\varrho_{\eps}]B}_{L^2(\T^3)}^2+\norm{[\cdot\nabla \sigma_{k,j},\varrho_{\eps}]B}_{L^2(\T^3)}^2\right),
\end{multline*}
\begin{multline*}
    \sumkj\left\lvert \int_{\T^3}Tr\left(\nabla^2_b f(x,B^{\eps}_t(x))\left(\left([\sigma_{k,j}\cdot\nabla, \varrho_{\eps}]+[\cdot\nabla \sigma_{k,j}, \varrho_{\eps} ]\right)B_t(x)\right.\right. \right. \\   \left.\left. \otimes\left(\sigma_{-k,j}(x)\cdot \nabla B^{\eps}_t(x)-B^{\eps}_t(x)\cdot\nabla \sigma_{-k,j}(x)\right)\right)\right) dx \bigg\rvert\\ \lesssim \norm{f}_{C^2_b}\sup_{t\in [0,T]}\norm{B_t}_{V}\sumkj\left( \norm{[\sigma_{k,j}\cdot\nabla,\varrho_{\eps}]B}_{L^2(\T^3)}+\norm{[\cdot\nabla \sigma_{k,j},\varrho_{\eps}]B}_{L^2(\T^3)}\right).
\end{multline*}
As a consequence of \autoref{well_posedness_estimates_MF}, \autoref{commutator_lemma} the terms above converge pointwise to $0$ and are uniformly bounded by a function in $L^1(0,T)$. Therefore dominated convergence theorem implies the claim. Concerning the last stochastic integral, by It\^o isometry
\begin{align*}
&\expt{\left\lvert\sumkj \int_0^t \int_{\T^3}\left([\sigma_{k,j}\cdot\nabla, \varrho_{\eps}]+[\cdot\nabla \sigma_{k,j}, \varrho_{\eps} ]\right)B_s(x)\cdot\nabla_b f(x,B^{\eps}_s) dx dW^{k,j}_s \right\rvert^2}\\ & =  \expt{ \int_0^t \sumkj \left\lvert\int_{\T^3}\left([\sigma_{k,j}\cdot\nabla, \varrho_{\eps}]+[\cdot\nabla \sigma_{k,j}, \varrho_{\eps} ]\right)B_s(x)\cdot\nabla_b f(x,B^{\eps}_s) dx \right\rvert^2 ds }.  
\end{align*}
If we show that the latter term converges to $0$ as $\eps\rightarrow 0$ then by passing to subsequences we have the required $\mathbb{P}-a.s.$ convergence. Thanks to the regularity of $f$ 
\begin{multline*}
    \sumkj \left\lvert\int_{\T^3}\left([\sigma_{k,j}\cdot\nabla, \varrho_{\eps}]+[\cdot\nabla \sigma_{k,j}, \varrho_{\eps} ]\right)B_t(x)\cdot\nabla_b f(x,B^{\eps}_t) dx \right\rvert^2\\ \lesssim \norm{f}_{C^1_b}^2\sumkj\left( \norm{[\sigma_{k,j}\cdot\nabla,\varrho_{\eps}]B}_{L^2(\T^3)}^2+\norm{[\cdot\nabla \sigma_{k,j},\varrho_{\eps}]B}_{L^2(\T^3)}^2\right).
\end{multline*}
As a consequence of \autoref{well_posedness_estimates_MF}, \autoref{commutator_lemma} the terms above converge pointwise to $0$ and are uniformly bounded by a function in $L^1(\Omega\times(0,T))$. Dominated convergence theorem's implies the validity of the claim. \\
So far we showed that for each $t\in [0,T]$ and $f\in C^3_b(\T^3\times \R^3)$ there exists a zero measure set $\mathcal{N}\subseteq \Omega$ such that on its complementary
\begin{align*}
    \int_{\T^3}f(x,B_t(x))-f(x,\overline{B}_0(x)) dx&=-\sumkj\int_0^t\int_{\T^3} \sigma_{k,j}(x)\cdot\left(\nabla_x f\right)(x,B_s(x))dW^{k,j}_s\\ & -\sumkj\int_0^t\int_{\T^3}B_s(x)\cdot\nabla\sigma_{k,j}(x)\nabla_b f(x,B_s(x))dW^{k,j}_s \notag\\ & +\frac{2}{3}\eta\int_0^t\int_{\T^3} \left(\Delta_x f\right)(x,B_s(x)) dxds\\ & +\int_0^t\int_{\T^3} \operatorname{div}_b\left(\mathcal{L}(B_s(x))\nabla_b f(x,B_s(x))\right) dx ds.
\end{align*}
Thanks to the density of $C^{3}_b(\T^3\times \R^3)$ in $C^{2}_b(\T^3\times \R^3)$ and the continuity properties of $B$, \emph{cf. }\autoref{well_posedness_estimates_MF}, the relation above holds for each $\phi \in C^2_b(\T^3\times \R^3)$ $\mathbb{P}-a.s.$  uniformly in $t\in [0,T]$. Recalling the definition of $\mu(t,x,db)$ this completes the proof of \eqref{vlasov_weak}.
\end{proof}
\subsection*{Uniform Estimates}
Now we are interested to study the regularity of the increments of $\int_{T^3\times \R^3} f(x,b)\mu_n(t,x,db)dx$. The following holds
\begin{proposition}\label{a_priori_time}
For each $f\in C^2_b(\T^3\times\R^3)$ and $\gamma\geq 1$ we have
\begin{align*}
    \expt{\left\lvert \int_{T^3\times \R^3} f(x,b)(\mu_n(t,x,db)-\mu_n(s,x,db))dx \right\rvert^{\gamma}}\leq  C_2(\overline{B}_0,T,2\gamma)\norm{f}_{C^2_b(\T^3\times\R^3)}^{\gamma}\lvert t-s\rvert^{\gamma/2}.
\end{align*}
\end{proposition}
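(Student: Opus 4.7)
The plan is to apply the weak Vlasov formulation \eqref{vlasov_weak} (equivalently \autoref{alter_vlasov_weak}) to write the increment as a sum of four terms,
\[
\int_{\T^3\times\R^3} f(x,b)\bigl(\mu_n(t,x,db)-\mu_n(s,x,db)\bigr)dx = I_1(s,t)+I_2(s,t)+I_3(s,t)+I_4(s,t),
\]
where $I_1,I_2$ are the stochastic integrals against $\skj\cdot(\nabla_x f)$ and $b\cdot\nabla\skj\,\nabla_b f$, while $I_3,I_4$ are the Lebesgue integrals in time involving the $\Delta_x$ correction (with prefactor $\frac{2}{3}\eta_n$) and the $\operatorname{div}_b(\mathcal{L}^n(b)\nabla_b f)$ term. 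I would then estimate $\expt{|I_i|^\gamma}$ separately and combine using $|t-s|\leq T$.

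For the stochastic terms I would first apply the Burkholder--Davis--Gundy inequality and compute the quadratic variation explicitly using the independence structure of the $W^{k,j}$. For $I_1$ the integrand is deterministic in $k$ up to a uniform factor and Cauchy--Schwarz plus the bound $\sum_{k,j}\|\skj\|_{L^\infty}^2 \lesssim \sum_{n\leq|k|\leq 2n}|k|^{-5}=O(n^{-2})$ gives a bound $\lesssim \|f\|_{C^1_b}^2$. For $I_2$ the analogous computation uses $\sum_{k,j}\|\nabla\skj\|_{L^\infty}^2 \lesssim \sum_{n\leq|k|\leq 2n}|k|^{-3}= O(1)$ (this is essentially $\alpha_n$) together with $\|B^n_u\|_{L^1}\leq C\|B^n_u\|$; hence after BDG one obtains
\[
\expt{|I_1|^\gamma+|I_2|^\gamma}\lesssim \|f\|_{C^1_b}^\gamma\,\expt{\biggl(\int_s^t (1+\|B^n_u\|^2)\,du\biggr)^{\!\gamma/2}}\lesssim \|f\|_{C^1_b}^\gamma\,|t-s|^{\gamma/2}\,\expt{\sup_{u\in[0,T]}(1+\|B^n_u\|^{\gamma})},
\]
and the last expectation is controlled by $C_1(\overline{B}_0,T,\gamma)$ via \eqref{apriori_estimate_gamma}. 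For the deterministic integrals I would use Hölder to pull out a factor $|t-s|^{\gamma-1}$ and bound the integrands pointwise. The $I_3$ term is trivially bounded by $\eta_n\|\Delta_x f\|_\infty|\T^3|$, which is $O(n^{-2})$. For $I_4$ I would expand $\operatorname{div}_b(\mathcal{L}^n(b)\nabla_b f)$ and, using that $\mathcal{L}^n(b)$ is a polynomial of degree two in $b$ with coefficients uniformly bounded in $n$ (again by summing $|k|^{-3}$ on $n\leq|k|\leq 2n$), obtain
\[
|\operatorname{div}_b(\mathcal{L}^n(b)\nabla_b f(x,b))|\lesssim \|f\|_{C^2_b}(1+|b|^2),
\]
so that $|I_4|\lesssim \|f\|_{C^2_b}|t-s|\sup_u(1+\|B^n_u\|^2)$ and Jensen together with \eqref{apriori_estimate_gamma} with exponent $2\gamma$ yield $\expt{|I_4|^\gamma}\lesssim\|f\|_{C^2_b}^\gamma |t-s|^\gamma\, C_1(\overline{B}_0,T,2\gamma)$.

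Finally I would sum the four estimates, use $|t-s|^\gamma\leq T^{\gamma/2}|t-s|^{\gamma/2}$ to merge the rates, and read off the constant $C_2(\overline{B}_0,T,2\gamma)$ (the factor $2\gamma$ appears because $I_4$ forces one to take $2\gamma$-th moments of $\|B^n\|$). The only mildly delicate point, and essentially the only obstacle, is checking that the sums over $k$ coming from $\sum_{k,j}|\text{integrand}|^2$ are uniformly bounded in $n$; once the scaling of the $\theta^n_{k,j}$ in \eqref{definition thetakj} is kept track of carefully, everything else is a routine application of BDG, Cauchy--Schwarz and \autoref{well_posedness_estimates_MF}.
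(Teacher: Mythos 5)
Your proposal is correct and follows essentially the same route as the paper: decompose the increment via the weak Vlasov formulation into two stochastic integrals and two Lebesgue integrals, treat the stochastic terms with Burkholder--Davis--Gundy using the uniform-in-$n$ bounds $\sum_{k,j}\norm{\skj}_{L^\infty}^2=O(n^{-2})$ and $\sum_{k,j}\norm{\nabla\skj}_{L^\infty}^2=O(1)$, and control the $\operatorname{div}_b(\mathcal{L}^n(b)\nabla_b f)$ term through the $2\gamma$-th moment estimate of \autoref{well_posedness_estimates_MF}. The only cosmetic difference is that the paper exploits the cancellation $k\cdot(I-\frac{k\otimes k}{|k|^2})=0$ to reduce that term to $\trace(\mathcal{L}^n(B)\nabla^2_b f)$, whereas your cruder bound $|\operatorname{div}_b(\mathcal{L}^n(b)\nabla_b f)|\lesssim \norm{f}_{C^2_b}(1+|b|^2)$ works just as well for this estimate.
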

\begin{proof}
Due to \autoref{alter_vlasov_weak} we have
\begin{align*}
\expt{\left\lvert \int_{\T^3}f(x,B^n_t(x))-f(x,B^n_s(x)) dx\right\rvert^{\gamma}} &\lesssim_{\gamma}\expt{\left\lvert \sumkj\int_s^t\int_{\T^3} \skj(x)\cdot\left(\nabla_x f\right)(x,B^n_r(x))dxdW^{k,j}_r \right\rvert^\gamma}\\ &+\expt{\left\lvert \sumkj\int_s^t\int_{\T^3}B^n_r(x)\cdot\nabla\skj(x)\nabla_b f(x,B^n_r(x))dxdW^{k,j}_r\right\rvert^\gamma}\\ & +\left(\frac{2}{3}\eta_n\right)^{\gamma}\expt{\left\lvert \int_s^t\int_{\T^3} \left(\Delta_x f\right)(x,B^n_r(x)) dxdr\right\rvert^\gamma} \\ & +\expt{\left\lvert \int_s^t\int_{\T^3} \operatorname{div}_b\left(\mathcal{L}^n(B^n_r(x))\nabla_b f(x,B^n_r(x))\right) dx dr\right\rvert^\gamma}\\ & =J_1+J_2+J_3+J_4.   \end{align*}
We have immediately
\begin{align}\label{estimate_J_3}
    J_3\lesssim \norm{\nabla^2_x f}_{C_b}^{\gamma}\lvert t-s\rvert^{\gamma}.
\end{align}
Similarly we can treat $J_4$. Indeed, since $k\cdot \left(I-\frac{k\otimes k}{\lvert k\rvert^2}\right)=0$, we obtain   
\begin{align}\label{estimate J_4}
    J_4 & \lesssim_{\gamma}\expt{\left\lvert \int_s^t\int_{\T^3} Tr\left(\mathcal{L}^n(B^n_r(x))\nabla^2_b f(x,B^n_r(x))\right) dx dr\right\rvert^\gamma}\notag\\ & \lesssim 
    \expt{\left\lvert \int_s^t\int_{\T^3} \lvert B^n_r(x)\rvert^2 \sum_{\substack{k\in \Z^3_0\\ n\leq \lvert k\rvert \leq 2n}}\frac{1}{\lvert k\rvert^3}\norm{\nabla^2_b f}_{C_b} dx dr\right\rvert^\gamma}\notag\\ & \lesssim \alpha_n^{\gamma}\vert   t-s\vert^\gamma\norm{\nabla^2_b f}_{C_b}^{\gamma}\expt{\sup_{t\in [0,T]}\norm{B^n_t}^{2\gamma}}\notag\\ & \lesssim C_1(\overline{B}_0,T,2\gamma)\norm{\nabla^2_b f}_{C_b}^{\gamma}\lvert t-s\rvert^{\gamma}.
\end{align}
Lastly $J_1$ and $J_2$ can be treated by Burkholder-Davis-Gundy inequality. Indeed,
\begin{align}\label{estimate_J_1}
    J_1&\lesssim_{\gamma}\expt{\left(\sumkj \int_s^t \left(\int_{\T^3} \skj(x)\cdot\left(\nabla_x f\right)(x,B^n_r(x))dx\right)^2 dr\right)^{\gamma/2}}\notag\\ & \lesssim_{\gamma} \eta_n^{\gamma/2}\norm{\nabla_x f}_{C_b}^\gamma\lvert t-s\rvert^{\gamma/2}.
\end{align}
Finally
\begin{align}\label{estimate_J_2}
J_2 &\lesssim_{\gamma} \expt{\left(\sumkj \int_s^t \left(\int_{\T^3} B^n_r(x)\cdot\nabla\skj(x)\nabla_b f(x,B^n_r(x))dx\right)^2 dr\right)^{\gamma/2}} \notag \\ & \lesssim \alpha_n^{\gamma/2}\expt{\operatorname{sup}_{t\in [0,T]}\norm{B^n_t}^{\gamma}} \norm{\nabla_b f}_{C_b}^\gamma \lvert t-s\rvert^{\gamma/2}\notag\\ & \lesssim\alpha_n^{\gamma/2}C_{1}(\overline{B}_0,T,\gamma)\norm{\nabla_b f}_{C_b}^\gamma \lvert t-s\rvert^{\gamma/2}.
\end{align}
Combining \eqref{estimate_J_1}, \eqref{estimate_J_2}, \eqref{estimate_J_3}, \eqref{estimate J_4} the result follows.
\end{proof}
As an immediate corollary of \autoref{a_priori_time} and of the Garsia--Rodemich--Rumsey inequality, see \cite{garsia1970real}, we obtain
\begin{corollary}\label{cor_holder_norms}
For each $f\in C^2_b(\T^3\times\R^3),\ \phi,\lambda$ such that $\phi<\frac{1}{2},\ \phi-\frac{1}{\lambda}>0$ then \begin{align*}
    \int_{T^3\times \R^3}f(x,b)\mu_n(t,x,db)dx
\end{align*} has a $\phi-\frac{1}{\lambda}$ H\"older version. Moreover
\begin{align*}
    \expt{\norm{\int_{T^3\times \R^3}f(x,b)\mu_n(t,x,db)dx}_{C^{0,\phi-\frac{1}{\lambda}}}^{\lambda}}\leq C(\phi,\lambda,\overline{B}_0,T)\norm{f}_{C^2_b}^{\lambda}.
\end{align*}
\end{corollary}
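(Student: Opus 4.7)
The plan is to deduce the corollary directly from \autoref{a_priori_time} by an application of the Garsia--Rodemich--Rumsey (GRR) inequality in its quantitative form, which yields both the existence of the Hölder modification and a bound on moments of its Hölder seminorm.

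First I would fix $\phi,\lambda$ with $\phi<1/2$ and $\phi-1/\lambda>0$, and set $X_n(t):=\int_{\T^3\times\R^3}f(x,b)\mu_n(t,x,db)\,dx$. Apply \autoref{a_priori_time} with $\gamma=\lambda$ to obtain
\begin{equation*}
\expt{|X_n(t)-X_n(s)|^\lambda}\leq C_2(\overline{B}_0,T,2\lambda)\,\norm{f}_{C^2_b}^{\lambda}\,|t-s|^{\lambda/2}.
\end{equation*}
Next I would invoke GRR with $\Psi(u)=u^\lambda$ and $p(u)=u^{\phi+1/\lambda}$; since $\Psi^{-1}(y)=y^{1/\lambda}$ and $dp(u)=(\phi+1/\lambda)u^{\phi+1/\lambda-1}\,du$, a direct computation yields, for each $0\le s<t\le T$,
\begin{equation*}
|X_n(t)-X_n(s)|\leq C(\phi,\lambda)\,|t-s|^{\phi-1/\lambda}\,\bigl(\mathcal{B}_n(f)\bigr)^{1/\lambda},
\end{equation*}
where the random variable
\begin{equation*}
\mathcal{B}_n(f):=\int_0^T\!\!\int_0^T\frac{|X_n(t)-X_n(s)|^\lambda}{|t-s|^{(\phi+1/\lambda)\lambda}}\,ds\,dt
\end{equation*}
governs the Hölder norm of the resulting continuous modification.

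It then remains to bound $\expt{\mathcal{B}_n(f)}$. Exchanging expectation and integration (Fubini) and using the moment bound from \autoref{a_priori_time}:
\begin{equation*}
\expt{\mathcal{B}_n(f)}\leq C_2(\overline{B}_0,T,2\lambda)\,\norm{f}_{C^2_b}^{\lambda}\int_0^T\!\!\int_0^T |t-s|^{\lambda/2-(\phi+1/\lambda)\lambda}\,ds\,dt.
\end{equation*}
The exponent inside the double integral equals $\lambda/2-\phi\lambda-1$, which is strictly greater than $-1$ precisely because $\phi<1/2$; hence the double integral is finite. Taking $\lambda$-th powers of the GRR pointwise bound (which controls the Hölder seminorm of $X_n$) and then expectation closes the argument and yields
\begin{equation*}
\expt{\norm{X_n}_{C^{0,\phi-1/\lambda}}^{\lambda}}\leq C(\phi,\lambda,\overline{B}_0,T)\,\norm{f}_{C^2_b}^{\lambda}.
\end{equation*}

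There is no substantial obstacle here, since this is the standard Kolmogorov/GRR deduction from moments of increments. The only points requiring attention are (i) matching exponents so that the Hölder order comes out exactly as $\phi-1/\lambda$, which forces the GRR auxiliary parameter $\alpha=\phi+1/\lambda$, and (ii) verifying that the hypothesis $\phi<1/2$ is precisely what makes the resulting double integral converge at the diagonal; both are purely bookkeeping.
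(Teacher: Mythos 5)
Your proposal is correct and follows essentially the same route as the paper: apply \autoref{a_priori_time} with $\gamma=\lambda$ and then the Garsia--Rodemich--Rumsey inequality with $\Psi(u)=u^{\lambda}$, $p(u)=u^{\phi+1/\lambda}$, bounding the expectation of the double integral by Fubini and noting that $\phi<\tfrac12$ makes the exponent $\lambda/2-\phi\lambda-1$ integrable at the diagonal. The bookkeeping of exponents and constants matches the paper's argument.
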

\begin{proof}
Thanks to Garsia--Rodemich--Rumsey inequality we know that for each $t,s$ \begin{multline}\label{grr ineq}
    \frac{\left\lvert\int_{T^3\times \R^3}f(x,b)\left(\mu_n(t,x,db)-\mu_n(s,x,db)\right)dx\right\rvert^{\lambda}}{\lvert t-s\rvert^{\phi\lambda-1}}\\ \leq C(\phi,\lambda)\int_s^t\int_s^t \frac{\left\lvert\int_{T^3\times \R^3}f(x,b)\left(\mu_n(r,x,db)-\mu_n(v,x,db)\right)dx\right\rvert^{\lambda}}{\lvert r-v\rvert^{\phi\lambda+1}}dr dv.
\end{multline}    
Therefore, combining \eqref{grr ineq} and  \autoref{a_priori_time} we obtain
\begin{align*}
\expt{\norm{\int_{T^3\times \R^3}f(x,b)\mu_n(t,x,db)dx}_{C^{0,\phi-\frac{1}{\lambda}}}^{\lambda}}&\lesssim_{\lambda} \norm{f}_{C_b}^{\lambda}+\int_0^T\int_0^T \frac{C_2(\overline{B}_0,T,2\lambda)\norm{f}_{C^2_b(\T^3\times\R^3)}^{\lambda}}{\lvert r-v\rvert^{\lambda(\phi-\frac{1}{2})+1}} dr dv \\ & \leq C(\phi,\lambda,\overline{B}_0,T)\norm{f}_{C^2_b}^{\lambda}.    
\end{align*}
This completes the proof.
\end{proof}
\subsection{Compactness of the laws}\label{compactness_stochastic_young_measures}
The goal of this section is to show that the laws of $\mu_n$ are tight in the space of probability measure on $C([0,T];\mathcal{Y}).$ We start introducing a family of mollifiers on $\T^3\times \R^3$, i.e. we introduce \begin{align*}
    h:\T^3\times \R^3\rightarrow \R\quad s.t.\ \operatorname{supp}h\subseteq B(\mathbf{0},1/2),\ h\in C^{\infty}(\T^3\times\R^3),\ h\geq 0, \int_{\T^3\times\R^3}h(\mathbf{x})d\mathbf{x}=1
\end{align*}
and set
$h_\eps:\T^3\times \R^3\rightarrow \R,\quad h_{\eps}(\mathbf{x})=\frac{1}{\eps^6}h(\frac{\mathbf{x}}{\eps})$
We recall that if $f\in C_{b}(\T^3\times\R^3)$ is uniformly continuous then
\begin{align}\label{properties mollification}
f^{\eps}:=h_{\eps}\ast f\rightarrow f\quad \text{in } C_{b}(\T^3\times\R^3),\quad 
\norm{f^{\eps}}_{C^1_b}\leq \frac{\norm{f}_{C_b}\norm{\nabla h}_{W^{1,1}}}{\eps},\quad \norm{f^{\eps}}_{C^2_b}\leq \frac{\norm{f}_{C_b}\norm{ h}_{W^{2,1}}}{\eps^2}.  
\end{align}
Now we are ready to prove the main result of this section.
\begin{theorem}\label{compactness laws}
The laws of $\mu_n$ are tight in the space of probability measure on $C([0,T];\mathcal{Y}).$    
\end{theorem}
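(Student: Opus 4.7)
I would apply the deterministic compactness criterion of \autoref{compact_sets}: fix $\delta>0$ and exhibit parameters $R(k,\delta)$ and $\theta(k,\delta,i)$ such that the associated set $K^{\delta}$ satisfies $\PP(\mu_n\in K^{\delta})\geq 1-\delta$ uniformly in $n$; this is the content of tightness on $C([0,T];\mathcal{Y})$. The two defining conditions of $K^{\delta}$ split the task into (a) a mass concentration bound on the $b$-variable of $\mu_n$, and (b) equicontinuity in time against the countable family $\{g_i\}_{i\in\N}\subseteq\mathcal{G}$.

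For (a), I exploit the delta structure $\mu_n(t,x,db)=\delta_{B^n_t(x)}(db)$, whence
\[
\int_{\T^3}^{*}\mu_n(t,x)(B(0,R)^{c})\,dx=\mathcal{L}_3\bigl(\{x\in\T^3:\,|B^n_t(x)|>R\}\bigr)\leq R^{-2}\,\norm{B^n_t}^{2}.
\]
Combining Markov's inequality with the uniform bound \eqref{apriori_estimateB_2}, picking $R_k=R(k,\delta)$ sufficiently large makes the exceptional probability of the first defining condition (at level $k$) at most $\delta\cdot 2^{-k-1}$, uniformly in $n$.

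For (b), the obstacle is that each $g_i\in\mathcal{G}$ is merely uniformly continuous, while the Hölder estimate of \autoref{cor_holder_norms} demands $C^2_b$ test functions. I would regularise via the mollifiers of \eqref{properties mollification}: set $g_i^{\eps}:=h_\eps\ast g_i$, so $\norm{g_i-g_i^{\eps}}_{C_b}\to 0$ as $\eps\to 0$ by uniform continuity of $g_i$, while $\norm{g_i^{\eps}}_{C^2_b}\lesssim\eps^{-2}$. The triangle split
\[
\Bigl|\int g_i\,d(\mu_n(t)-\mu_n(s))\Bigr|\leq 2\lvert\T^3\rvert\,\norm{g_i-g_i^{\eps}}_{C_b}+\Bigl|\int g_i^{\eps}\,d(\mu_n(t)-\mu_n(s))\Bigr|
\]
lets me first fix $\eps=\eps(k,i)$ so that the approximation term is at most $1/(2k)$; then, applying \autoref{cor_holder_norms} to $g_i^{\eps}$ with $\phi<1/2$ and $\lambda$ large (so $\phi-1/\lambda>0$), Chebyshev on the Hölder seminorm (whose $L^{\lambda}$-moment is $\lesssim\norm{g_i^{\eps}}_{C^2_b}^{\lambda}$) yields a $\theta(k,\delta,i)$ so small that the stochastic term exceeds $1/(2k)$ on $\{|t-s|<\theta\}$ with probability at most $\delta\cdot 2^{-k-i-1}$, uniformly in $n$.

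A union bound over $k$ and $(k,i)$ then gives $\PP(\mu_n\notin K^{\delta})<\delta$ uniformly in $n$, which is tightness. The principal difficulty is the coupling in (b): shrinking $\eps$ reduces the uniform-continuity error but inflates $\norm{g_i^{\eps}}_{C^2_b}$, forcing a smaller $\theta$. Since \autoref{compact_sets} allows $\theta$ to depend on $i$, the sequential choice ``fix $\eps(k,i)$ first, then $\theta(k,\delta,i)$'' resolves this without circularity.
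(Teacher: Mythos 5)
Your proposal is correct and follows essentially the same route as the paper's proof: reduction to the deterministic compact sets of \autoref{compact_sets}, the Chebyshev/Markov bound on $\mathcal{L}_3\{|B^n_t|>R\}$ via \eqref{apriori_estimateB_2} for the mass condition, and mollification of the $g_i$ combined with \autoref{cor_holder_norms} for the time-equicontinuity condition, with the same order of choices ($\eps$ first, then $\theta$). The only difference is cosmetic bookkeeping — you bound per-event probabilities and take a union bound, while the paper sums expectations after Markov's inequality — and both yield the same conclusion.
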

\begin{proof}
Thanks to \autoref{compact_sets} it is enough to show that for each $\delta>0$ there exist a family of positive quantities $R(k,\delta),\ \theta(k,\delta,i),\ i,k\in \N$ such that
\begin{align*}
    \mathbb{P}(\mu_n\notin \overline{K^{\delta}})\leq \delta.
\end{align*}
By definition of $K^{\delta}$ we have thanks to Markov's inequality
\begin{align*}
    \mathbb{P}(\mu_n\notin \overline{K^{\delta}})& \leq  \mathbb{P}(\mu_n\notin {K^{\delta}})\\ & \leq \sum_{k\in \N}\mathbb{P}\left(\operatorname{sup}_{t\in [0,T]}\int_{\T^3}^* \mu_n(t, x, B(0,R(k,\delta)))dx> \frac{1}{k}\right)\\ & +\sum_{i, k\in \N}\mathbb{P}\left(\sup_{\lvert t-s\rvert< \theta(k,\delta,i)}\left\lvert \int_{\T^3\times \R^3} g_i(x,b) (\mu_n(t,x,db)-\mu_n(s,x,db))\right\rvert > \frac{1}{k}\right)\\ & \leq \sum_{k\in \N} k\expt{\operatorname{sup}_{t\in [0,T]}\int_{\T^3}^* \mu_n(t,x, B(0,R(k,\delta)))dx}\\ & + \sum_{i, k\in \N}k\expt{\sup_{\lvert t-s\rvert< \theta(k,\delta,i)}\left\lvert \int_{\T^3\times \R^3} g_i(x,b) (\mu_n(t,x,db)-\mu_n(s,x,db))\right\rvert}=H_1+H_2.
\end{align*}
Thanks to the definition of $\mu_n$ and \autoref{well_posedness_estimates_MF} we can easily treat $H_1$ again by Markov's inequality. Indeed, 
\begin{align}\label{estimate_1_compact_meas}
    H_1&\leq \sum_{k\in \N} k\expt{\operatorname{sup}_{t\in [0,T]}\int_{\T^3}\one_{|B^n_t(x)|>R(k,\delta)}(x) dx }\notag\\ & \leq \lvert \T^3\rvert \sum_{k\in \N} \frac{k}{R^2(k,\delta)}\expt{\operatorname{sup}_{t\in [0,T]}\norm{B^n_t}^2}\notag\\ & \leq C_0(\overline{B}_0,T)\lvert \T^3\rvert \sum_{k\in \N} \frac{k}{R^2(k,\delta)}.
\end{align}
The estimate of $H_2$ is more involved. Obviously we have 
\begin{align*}
    \left\lvert \int_{\T^3\times \R^3} g_i(x,b) (\mu_n(t,x,db)-\mu_n(s,x,db))\right\rvert&\leq \left\lvert \int_{\T^3\times \R^3} g^{\eps}_i(x,b) (\mu_n(t,x,db)-\mu_n(s,x,db))\right\rvert\\ &+ \int_{\T^3\times \R^3}\lvert  g_i(x,b)-g^{\eps}_i(x,b)\rvert \lvert\mu_n(t,x,db)-\mu_n(s,x,db)\rvert\\ & \leq \lvert \T^3\rvert \norm{g_i-g_i^{\eps}}_{C_b}\\ &+\left\lvert \int_{\T^3\times \R^3} g^{\eps}_i(x,b) (\mu_n(t,x,db)-\mu_n(s,x,db))\right\rvert.
\end{align*}
Due to \eqref{properties mollification}  for each $i\in \N$ we can choose $\eps=\eps(k,\delta,i)$ such that 
\begin{align*}
    \lvert \T^3\rvert \norm{g_i-g_i^{\eps}}_{C_b}\leq \frac{3\delta}{2^{i+1} \pi^2 k^3}.
\end{align*}
Therefore we obtain
\begin{align*}
    H_2\leq \frac{\delta}{4}+\sum_{i, k\in \N}k\expt{\sup_{\lvert t-s\rvert< \theta(k,\delta,i)}\left\lvert \int_{\T^3\times \R^3} g^{\eps}_i(x,b) (\mu_n(t,x,db)-\mu_n(s,x,db))\right\rvert}.
\end{align*}
Lastly due to \autoref{cor_holder_norms}, with the choice of $\phi=\frac{3}{8},\ \gamma=8$, and \eqref{properties mollification} we have
\begin{align}\label{estimate_2_compact_meas}
    H_2\leq \frac{\delta}{4}+\frac{C(\overline{B}_0,T)\norm{h}_{W^{2,1}}}{\eps^2}\sum_{i, k\in \N}k\left(\theta(k,\delta,i)\right)^{1/4}.
\end{align}
Combining \eqref{estimate_1_compact_meas} and \eqref{estimate_2_compact_meas} we obtain
\begin{align*}
    \mathbb{P}(\mu_n\notin \overline{K^{\delta}})& \leq \frac{\delta}{4}+C_0(\overline{B}_0,T)\lvert \T^3\rvert \sum_{k\in \N} \frac{k}{R^2(k,\delta)}+\frac{C_3(\overline{B}_0,T)\norm{h}_{W^{2,1}}}{\eps^2}\sum_{i, k\in \N}k\left(\theta(k,\delta,i)\right)^{1/4}.
\end{align*}
Choosing 
\begin{align*}
    R(k,\delta)=\sqrt{\frac{2\pi^3k^3 C_0(\overline{B}_0,T)|\T^3|}{3\delta}},\quad \theta(k,\delta,i)=\left(\frac{3\eps^2}{2^{i+1}\pi^2k^3\norm{h}_{W^{2,1}}C_3(\overline{B}_0,T)}\right)^4
\end{align*}
the claim follows and the proof is complete.
\end{proof}
\subsection{Passage to the Limit}\label{passage_to_the_limit_sec}
Thanks to \autoref{compactness laws}, by Jakubowski version of Skorokhod's representation's theorem, \cite[Theorem 2]{yakubovskii1997almost}, up to passing to subsequences, we can find an auxiliary probability space, that for simplicity we continue to call $(\Omega,\mathcal{F},\mathbb{P})$, and processes \begin{align*}
    \left(\Tilde{\mu}_n,\ W^n:=\{W^{n,k,j}\}_{\substack{k\in \mathbb{Z}^3_0\\ j\in \{1,2\}}}\right),\quad   \left(\overline{\mu},\ W:=\{W^{k,j}\}_{\substack{k\in \mathbb{Z}^3_0\\ j\in \{1,2\}}}\right),
\end{align*} such that \begin{align*}
    &\Tilde{\mu}_n\rightarrow \overline{\mu} \quad \text{in } C([0,T];\mathcal{Y}) \quad \mathbb{P}-a.s.\\
   & W^n\rightarrow W \quad \text{in } C([0,T];\mathbb{R}^{\mathbb{Z}^3_0\times \mathbb{Z}^3_0} )\quad \mathbb{P}-a.s.
\end{align*}
Of course the convergence above between $W^n$ and $W$ can be seen as the uniform convergence of cylindrical Wiener processes $W^n=\sumkj e_{k,j} W^{n,k,j},\ W=\sumkj e_{k,j} W^{k,j}$ on a suitable Hilbert space $U_0$.
 Before going on, in order to identify the PDE satisfied by $\overline{\mu}$ we provide integrability properties of $\overline{\mu}$. 
 \begin{proposition}\label{Limi_preserves_moments}
 The limit measure $\overline{\mu}$ satisfies
 \begin{align*}
\expt{ \sup_{t\in [0,T]}\int_{\T^3\times \R^3} \lvert b\rvert^2 \overline{\mu}(t,x,db)dx}\leq C(\overline{B}_0,T).    
\end{align*}
 \end{proposition}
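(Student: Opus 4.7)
The plan is to truncate $\lvert b\rvert^2$ to make it bounded and continuous, pass to the limit along the Skorokhod sequence $\Tilde{\mu}_n\to\overline{\mu}$ for each truncation level, and then send the truncation to infinity by monotone convergence. The uniform-in-$n$ input is the a priori estimate \eqref{apriori_estimateB_2}, transferred from $\mu_n$ to $\Tilde{\mu}_n$ via equality in law.

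First, I would observe that the Skorokhod representation preserves the law of $\mu_n$, so $\Tilde{\mu}_n$ and $\mu_n=\delta_{B^n_t(x)}(db)$ have the same distribution on $C([0,T];\mathcal{Y})$. The functional
\[
\Phi(\mu):=\sup_{t\in[0,T]}\int_{\T^3\times\R^3}\lvert b\rvert^2\,\mu(t,x,db)\,dx
\]
is Borel measurable into $[0,\infty]$: writing $\lvert b\rvert^2$ as the increasing pointwise limit of $f_M(b):=\lvert b\rvert^2\wedge M\in C_b$ and replacing $\sup_{t}$ by a sup over a countable dense subset of $[0,T]$ (legitimate by $t$-continuity of $\mu$ in $\mathcal{Y}$), $\Phi$ is a monotone countable sup of Borel functionals. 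Hence equality in law together with \eqref{apriori_estimateB_2} gives
\[
\expt{\sup_{t\in[0,T]}\int_{\T^3\times\R^3}\lvert b\rvert^2\,\Tilde{\mu}_n(t,x,db)\,dx}=\expt{\sup_{t\in[0,T]}\norm{B^n_t}^2}\le C_0(\overline{B}_0,T,\chi)
\]
uniformly in $n$.

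Next, for fixed $M>0$ the function $f_M$ lies in $C_b(\T^3\times\R^3)$, and the a.s.\ convergence $\Tilde{\mu}_n\to\overline{\mu}$ in $C([0,T];\mathcal{Y})$ implies $\sup_t\lvert\int f_M\,\Tilde{\mu}_n(t)-\int f_M\,\overline{\mu}(t)\rvert\to 0$ almost surely. The upgrade from uniform $\rho$-convergence to uniform convergence against a single $C_b$ test function is routine: pick a sequence $t_n$ attaining the sup, extract $t_n\to t^*$ by compactness of $[0,T]$, and combine $\rho(\Tilde{\mu}_n(t_n),\overline{\mu}(t_n))\to 0$ with continuity of $\overline{\mu}$ to conclude that $\Tilde{\mu}_n(t_n)\to\overline{\mu}(t^*)$ weakly. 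Since the corresponding integrands are dominated by $M\lvert\T^3\rvert$, dominated convergence combined with Fatou's lemma yields
\[
\expt{\sup_t\int f_M\,\overline{\mu}(t,x,db)\,dx}\le\liminf_n\expt{\sup_t\int f_M\,\Tilde{\mu}_n(t,x,db)\,dx}\le C_0.
\]

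Finally, $f_M\uparrow\lvert b\rvert^2$ pointwise implies $\sup_t\int f_M\,\overline{\mu}(t,x,db)\,dx\uparrow\sup_t\int\lvert b\rvert^2\,\overline{\mu}(t,x,db)\,dx$ for each $\omega$, and monotone convergence on expectations then delivers the proposition. The only mild obstacle is the $\rho$-convergence upgrade described above; the rest is truncation plus standard limiting arguments, so I do not expect serious difficulty.
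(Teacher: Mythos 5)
Your proof is correct and follows the same overall skeleton as the paper's: truncate $\lvert b\rvert^2$ to a bounded continuous function, pass to the limit along the Skorokhod sequence using the a.s.\ convergence $\Tilde{\mu}_n\to\overline{\mu}$ together with Fatou's lemma and the uniform bound \eqref{apriori_estimateB_2} (transferred by equality in law), and then remove the truncation by monotone convergence. The one place where you genuinely diverge is the treatment of the time supremum. The paper never proves uniform-in-time convergence of $\int f_M\,\Tilde{\mu}_n(t)$ directly; instead it only uses the (weaker, immediate) $L^p(0,T)$ convergence \eqref{convergece_L^p_measures}, obtains the bound $\expt{\|\int\lvert b\rvert^2\overline{\mu}(\cdot)\|_{L^p(0,T)}}\le C_0$ uniformly in $p$, and recovers the $L^\infty(0,T)$ norm by letting $p\to\infty$, invoking continuity of $\overline{\mu}$ at the end. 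You instead upgrade the $C([0,T];\mathcal{Y})$ convergence to uniform convergence against the single test function $f_M$ via the compactness/subsequence argument ($t_n\to t^*$ plus continuity of $t\mapsto\overline{\mu}(t)$), which lets you apply Fatou directly to the supremum. Both routes are valid; yours is arguably more direct and avoids the $p\to\infty$ step, while the paper's sidesteps the need to justify the uniform-convergence upgrade. Your measurability discussion for the functional $\Phi$ (countable sup over a dense set of times, monotone limit of the truncations) is a point the paper leaves implicit when it transfers the estimate from $B^n$ to $\Tilde{\mu}_n$, so making it explicit is a small improvement rather than a deviation.
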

\begin{proof}
Since $\Tilde{\mu}_n$ converges to $\overline{\mu}$ in $C([0,T];\mathcal{Y})$, for each $f\in C_{b}(\T^3\times \R^3)$ and $p\geq 1$, by dominated convergence theorem, 
\begin{align}\label{convergece_L^p_measures}
    \int_0^T\left\lvert \int_{\T^3\times \R^3} f(b,x) \tilde{\mu}_n(t,x,db)dx-\int_{\T^3\times \R^3} f(b,x) \overline{\mu}(t,x,db)dx\right\rvert^p  dt\rightarrow 0 \quad \mathbb{P}-a.s.
\end{align}
Choose a smooth test function such that
\begin{align*}
    \phi^M(b,x)=\begin{cases}
        \lvert b\rvert^2\quad &\text{if } \lvert b\rvert\leq  M\\
        0\quad &\text{if } \lvert b\rvert\geq  M+1,
    \end{cases}\quad \norm{\phi^M}_{C_b}\leq M^2. 
\end{align*}
Obviously $\phi^M(b,x)\nearrow \lvert b\rvert^2$ pointwise. Therefore, due to \eqref{convergece_L^p_measures}, Fatou's Lemma and \autoref{well_posedness_estimates_MF}
\begin{align*}
    \expt{ \left(\int_0^T\left\lvert \int_{\T^3\times \R^3} \phi^M(b,x) \overline{\mu}(t,x,db)dx\right\rvert^p  dt\right)^{1/p}}&\leq \liminf_{n\rightarrow +\infty} \expt{ \left(\int_0^T\left\lvert \int_{\T^3\times \R^3} \phi^M(b,x) \tilde{\mu}_n(t,x,db)dx\right\rvert^p  dt\right)^{1/p}}\\ & \leq \liminf_{n\rightarrow +\infty} \expt{ \operatorname{sup}_{t\in [0,T]} \int_{\T^3\times \R^3} \lvert b\rvert^2 \tilde{\mu}_n(t,x,db)dx}\\ & \leq C_0(\overline{B}_0,T).
\end{align*}
Then, by monotone convergence theorem, for each $p\geq 1$
\begin{align}\label{bound_l^p_measure}
    \expt{ \left(\int_0^T\left\lvert \int_{\T^3\times \R^3} \lvert b\rvert^2 \overline{\mu}(t,x,db)dx\right\rvert^p  dt\right)^{1/p}}& \leq C_0(\overline{B}_0,T).
\end{align}
This implies that $\norm{\int_{\T^3\times \R^3} \lvert b\rvert^2 \overline{\mu}(\cdot, x,db)dx}_{L^p(0,T)}<+\infty\quad \mathbb{P}-a.s.$ Therefore
\begin{align}\label{convergence_linftymeasure}
  \norm{\int_{\T^3\times \R^3} \lvert b\rvert^2 \overline{\mu}(\cdot,x,db)dx}_{L^p(0,T)}\rightarrow \norm{\int_{\T^3\times \R^3} \lvert b\rvert^2 \overline{\mu}(\cdot, x,db)dx}_{L^\infty(0,T)}.  
\end{align}
Combining \eqref{bound_l^p_measure} and \eqref{convergence_linftymeasure} we obtain the result by Fatou's lemma and the continuity of $\overline{\mu}$.
\end{proof}
By the properties of Riemann sums we have
\begin{align}\label{convergence_operators}
    &\mathcal{L}^n(b)=(b\otimes b):\left(\sum_{\substack{k\in \Z^3_0\\ n\leq \lvert k\rvert \leq 2n}}\frac{k\otimes k}{\lvert k\rvert^5}\otimes \left(I-\frac{k\otimes k}{\lvert k\rvert^2}\right)\right)\notag\\ & \rightarrow \mathcal{L}(b):=(b\otimes b):\int_{\xi\in \R^3,\ 1\leq \lvert \xi\rvert\leq 2} \frac{\xi\otimes\xi}{\lvert \xi\rvert^5}\otimes \left(I-\frac{\xi\otimes \xi}{\lvert \xi\rvert^2}\right)d\xi =\frac{8\pi \log 2}{15}\left(2\lvert b\rvert^2 I-b\otimes b\right)
\end{align}
uniformly on compact sets.\\
Let us introduce our limit pde on the space of Young measures
\begin{align}\label{limit_PDE_vlasov}
\begin{cases}
\partial_t \mu(t,x,db)&=\operatorname{div}(\mathcal{L}(b)\mu(t,x,db))\quad b\in \R^3,x\in \T^3\ t\in [0,T]\\
\mu(0,x,db)&=\delta_{\overline{B}_0(x)}(db)
\end{cases}
\end{align}
and its notion of solution
\begin{definition}\label{weak_vlasov_pde}
We say that $\mu\in C([0,T];\mathcal{Y})$ is a weak solution of equation \eqref{limit_PDE_vlasov} if 
\begin{align*}
\mu(0,x,db)=\delta_{\overline{B}_0(x)}(db),\quad \int_0^T \int_{\T^3\times \R^3}\lvert b\rvert^2 \mu(t,x,db)dx<+\infty
\end{align*}
and for each $f\in C_c^1((0,T);C^2_c(\T^3\times \R^3))$ it holds
\begin{align*}
  &\int_0^T\int_{\T^3\times\R^3}\partial_sf(s,x,b){\mu}(s,x,db)dxds+\int_0^T\int_{\T^3\times \R^3} \operatorname{div}_b\left(\mathcal{L}(b)\nabla_b f_s(x,b)\right){\mu}(s,x,db) dx ds=0.
  \end{align*}
\end{definition}
   \begin{remark}\label{rmk:indifference to time}
   Arguing as in \cite[Remark 2.3]{trevisan2016well}, if $\mu$ is a weak solution of \eqref{limit_PDE_vlasov} in the sense of \autoref{weak_vlasov_pde}, then
\begin{align*}
 &\int_{\T^3\times \R^3} f_{t_2}(x,b)\mu(t_2,x,db)dx- \int_{\T^3\times \R^3} f_{t_1}(x,b)\mu(t_1,x,db)dx \\&=\int_{t_1}^{t_2}\int_{\T^3\times\R^3}\partial_sf(s,x,b){\mu}(s,x,db)dxds+\int_{t_1}^{t_2}\int_{\T^3\times \R^3} \operatorname{div}_b\left(\mathcal{L}(b)\nabla_b f_s(x,b)\right)\mu(s,x,db) dx ds
  \end{align*}
for each 
   $0\leq t_1\leq t_2\leq T$ and $f\in C^1_b\left([0,T];C^2_b(\T^3\times\R^3)\right)$.
    \end{remark}
The uniqueness of solutions of \eqref{limit_PDE_vlasov} in the sense of \autoref{weak_vlasov_pde} is an easy application of the results of \cite{trevisan2016well}. Indeed, the following holds.
   \begin{proposition}\label{prop:uniqueness_vlasov}
       There exists a unique $\mu\in C([0,T];\mathcal{Y})$ which solves \eqref{limit_PDE_vlasov} in the sense of \autoref{weak_vlasov_pde}.
       \end{proposition}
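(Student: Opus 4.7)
The limit PDE is purely a Fokker--Planck equation in the $b$-variable: no transport in $x$ appears, and $x$ enters only as a parameter through the initial datum. My plan is therefore to reduce uniqueness on the Young-measure level to uniqueness of a classical Fokker--Planck equation on $\mathbb{R}^3$ at almost every $x\in\mathbb{T}^3$, and then quote \cite{trevisan2016well} directly.

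The first step is disintegration. Since any $\mu\in C([0,T];\mathcal{Y})$ disintegrates as $\mu(t,x,db)=\nu^x_t(db)\,dx$, I would insert separable test functions $f(t,x,b)=\psi(x)g(t,b)$ (with $\psi\in C^\infty(\mathbb{T}^3)$, $g\in C^1_c((0,T);C^2_c(\mathbb{R}^3))$) into \autoref{weak_vlasov_pde}. Varying $\psi$ over a countable dense family and using the separability of the test-function class in $g$, one extracts a single Lebesgue-null set $N\subset\mathbb{T}^3$ off which
\[
\partial_t\nu^x_t=\operatorname{div}_b(\mathcal{L}(b)\nabla_b\nu^x_t),\qquad \nu^x_0=\delta_{\overline{B}_0(x)},
\]
holds distributionally on $\mathbb{R}^3$, with $t\mapsto\nu^x_t$ inheriting weak continuity from $\mu\in C([0,T];\mathcal{Y})$ (using \autoref{rmk:indifference to time}).

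The second step applies Trevisan's theorem. The operator $\operatorname{div}_b(\mathcal{L}(b)\nabla_b\cdot)$ is the formal Fokker--Planck operator associated with the SDE on $\mathbb{R}^3$ having diffusion matrix $\sigma\sigma^\top=2\mathcal{L}(b)$ and drift $\beta_i(b)=\sum_j\partial_j\mathcal{L}_{ij}(b)$. Because $\mathcal{L}(b)=\tfrac{8\pi\log 2}{15}(2|b|^2 I-b\otimes b)$ is a smooth polynomial of degree two, $\sigma$ and $\beta$ are locally Lipschitz with at-most-linear growth at infinity; consequently the martingale problem for this SDE starting from $\delta_{\overline{B}_0(x)}$ is well-posed in the strong sense. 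The superposition principle in \cite[Theorem 2.5]{trevisan2016well} then states that every measure-valued solution $\nu^x_\cdot$ of the Fokker--Planck equation satisfying
\[
\int_0^T\!\!\int_{\mathbb{R}^3}\bigl(|\beta(b)|+|\mathcal{L}(b)|\bigr)\,\nu^x_s(db)\,ds<\infty
\]
is the one-dimensional marginal of this (unique in law) martingale solution, hence is uniquely determined. The integrability requirement is underwritten by \autoref{Limi_preserves_moments}: $|\beta(b)|+|\mathcal{L}(b)|\lesssim 1+|b|^2$, and the bound $\mathbb{E}\sup_t\int\!\!\int|b|^2\overline{\mu}(t,x,db)\,dx<\infty$ transfers by Fubini to the disintegrated measures $\nu^x$ for a.e.\ $x$. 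Finally, since $\nu^x_\cdot$ is uniquely determined for a.e.\ $x$, the Young measure $\mu$ is itself unique.

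The conceptual content is entirely in the quoted superposition principle; the only place where some care is required is the disintegration step, where one must select a single null set in $x$ that simultaneously serves all countably many test functions and preserves the $C([0,T];\mathcal{Y})$ continuity pointwise in $x$. Once that technicality is handled, the remainder is a direct invocation of \cite{trevisan2016well}.
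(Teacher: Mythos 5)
Your proposal follows essentially the same route as the paper: identify \eqref{limit_PDE_vlasov} as the Fokker--Planck equation of an SDE in the $b$-variable with Lipschitz, linearly growing coefficients, deduce uniqueness in law for that SDE, and conclude via the superposition principle of \cite[Theorem 2.5]{trevisan2016well}, using the second-moment bound to verify the integrability hypothesis. The only structural difference is that you disintegrate in $x$ and apply Trevisan's theorem to the family $\nu^x_t$ on $\R^3$ for a.e.\ $x$, whereas the paper keeps $x$ as a frozen component of a joint SDE $(dX_t=0,\ db_t=\mathcal{A}(b_t)d\hat W_t)$ on $\T^3\times\R^3$ and applies the theorem once; the latter avoids the null-set bookkeeping and the need to check pointwise-in-$x$ narrow continuity that you correctly flag as the delicate part of your version.

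One step in your argument is asserted rather than proved, and the assertion as phrased is not a valid inference: you claim that $\sigma$ with $\sigma\sigma^\top=2\mathcal{L}(b)$ is locally Lipschitz \emph{because} $\mathcal{L}$ is a smooth quadratic polynomial. Smoothness of a nonnegative matrix field does not by itself transfer to its square root; indeed the paper's explicit square root
\begin{equation*}
\mathcal{A}(b)=\frac{\sqrt{8\pi\log 2}\,\lvert b\rvert}{\sqrt{15}}\,\frac{b\otimes b}{\lvert b\rvert^{2}}+\frac{4\sqrt{\pi\log 2}\,\lvert b\rvert}{\sqrt{15}}\left(I-\frac{b\otimes b}{\lvert b\rvert^{2}}\right)
\end{equation*}
fails to be differentiable at $b=0$, a point the paper has to work around later when proving the support results. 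The conclusion you need is nevertheless true: either one reads off global Lipschitz continuity and linear growth directly from this spectral formula (as the paper does), or one invokes the standard square-root lemma for $C^2$ nonnegative matrices with bounded second derivatives (applicable since $\nabla^2\mathcal{L}$ is constant). You should supply one of these justifications, since the Lipschitz property of the diffusion coefficient is precisely what powers the uniqueness in law that Trevisan's theorem converts into uniqueness for the Fokker--Planck equation. With that repaired, your argument is complete.
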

   \begin{proof}
   We start observing that for each $b\in \R^3$
   \begin{align*}
   \mathcal{L}(b)=\frac{8\pi\log 2 \lvert b\rvert^2}{15} \frac{b\otimes b}{\lvert b\rvert^2}+\frac{16\pi\log 2 \lvert b\rvert^2}{15} \left(I-\frac{b\otimes b }{\lvert b\rvert^2}\right).
   \end{align*}
   Therefore $ \mathcal{L}(b)=\mathcal{A}(b)\mathcal{A}(b)$, where 
   \begin{align*}
   \mathcal{A}(b)=\frac{\sqrt{8\pi\log 2} \lvert b\rvert}{\sqrt{15}} \frac{b\otimes b}{\lvert b\rvert^2}+\frac{4\sqrt{\pi\log 2 }\lvert b\rvert}{\sqrt{15}} \left(I-\frac{b\otimes b }{\lvert b\rvert^2}\right).
   \end{align*}
In particular $\mathcal{A}(b)$ is a symmetric, non negative matrix for each $b\in \R^3$ and the function $b\rightarrow  \mathcal{A}(b)$ is Lipschitz with linear growth.
Since for each $j\in \{1,2,3\},\ \sum_{i=1}^3 \partial_i\mathcal{L}_{i,j}(b)=0$, if $\hat{W}$ is a 3D Brownian motion, then the stochastic differential equation \begin{align}\label{uniqueness_Sde}
\begin{cases}
dX_t&=0,\\
db_t&=\mathcal{A}(b_t)d\hat{W}_t\\
X_0&=x\\
b_0&=b
\end{cases}
\end{align}
has \eqref{limit_PDE_vlasov} as Fokker-Planck equation associated. We have uniqueness in law of weak solutions of the stochastic differential equation \eqref{uniqueness_Sde} due to the properties of $\mathcal{A}(b)$. The latter implies uniqueness of solutions of \eqref{limit_PDE_vlasov} due to \cite[Theorem 2.5]{trevisan2016well}.
   \end{proof}
   Therefore, we are left to show that our limit object $\overline{\mu}$ is a weak solution of \eqref{limit_PDE_vlasov}. This is the content of the following result. \begin{proposition}\label{prop_limit_PDE}
   $\overline{\mu}$ is the unique weak solution of \eqref{limit_PDE_vlasov} in the sense of \autoref{weak_vlasov_pde}. 
   \end{proposition}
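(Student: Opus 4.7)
The plan is to pass to the limit $n\to\infty$ in a time-dependent version of the weak Vlasov identity \eqref{vlasov_weak}, using the almost sure convergences $\tilde\mu_n\to\overline\mu$ in $C([0,T];\mathcal{Y})$ and $W^n\to W$ guaranteed by the Skorokhod/Jakubowski construction, and to show that the stochastic contributions vanish in mean square. Uniqueness from \autoref{prop:uniqueness_vlasov} then identifies $\overline\mu$ with the deterministic solution $\mu$, and because this limit does not depend on the extracted subsequence or on the auxiliary probability space, the original sequence $\mu_n$ converges in probability to $\mu$ in $C([0,T];\mathcal{Y})$.

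First I would extend \eqref{vlasov_weak} to time-dependent test functions. For $f(t,x,b)=\varphi(t)g(x,b)$ with $\varphi\in C_c^1(0,T)$ and $g\in C_c^2(\T^3\times\R^3)$, multiplying the weak identity at time $t$ by $\varphi'(t)$ and integrating by parts in $t$ (using $\varphi(0)=\varphi(T)=0$) yields
\[
0 = \int_0^T \langle\partial_s f,\mu_n(s)\rangle\, ds + \tfrac{2}{3}\eta_n \int_0^T \langle\Delta_x f,\mu_n(s)\rangle\, ds + \int_0^T \langle\operatorname{div}_b(\mathcal{L}^n(b)\nabla_b f),\mu_n(s)\rangle\, ds + M_n,
\]
where $M_n$ collects the stochastic integrals at $t=T$. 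Linearity and density in $C_c^1((0,T);C_c^2(\T^3\times\R^3))$ propagate this to general time-dependent test functions, and the same identity is satisfied by $(\tilde\mu_n,W^n)$ on the auxiliary space by equality in law.

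The passage to the limit is straightforward for the drift terms: the $C([0,T];\mathcal{Y})$-convergence of $\tilde\mu_n$ and the bounded continuity of $\partial_s f$ give $\int_0^T\langle\partial_s f,\tilde\mu_n\rangle\,ds\to\int_0^T\langle\partial_s f,\overline\mu\rangle\,ds$ almost surely; since $\eta_n=O(n^{-2})$, the $x$-Laplacian term vanishes; and splitting $\operatorname{div}_b(\mathcal{L}^n\nabla_b f)=\operatorname{div}_b(\mathcal{L}\nabla_b f)+\operatorname{div}_b((\mathcal{L}^n-\mathcal{L})\nabla_b f)$, the compact $b$-support of $f$ together with \eqref{convergence_operators} makes the error uniformly small, while the main part converges by $\mathcal{Y}$-convergence. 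The crux is to show $\expt{\lvert M_n\rvert^2}\to 0$. Using $\nabla\skj(x)= i\tkj(k\otimes a_{k,j})e^{ik\cdot x}$, the stretching integrand at frequency $k$ reduces to a Fourier coefficient of the matrix $H_s:=B^n_s\otimes\nabla_b f(\cdot,B^n_s)$; summing over $j$ via the orthonormal frame $\{k/\lvert k\rvert,a_{k,1},a_{k,2}\}$ gives $\sum_j\lvert a_{k,j}\cdot v\rvert^2\leq\lvert v\rvert^2$, and combining $\lvert\tkj\rvert^2\leq\lvert k\rvert^{-5}\mathbf{1}_{\{n\leq\lvert k\rvert\leq 2n\}}$ with Bessel's inequality on the dyadic shell yields
\[
\sumkj\left\lvert\int_{\T^3}(B^n_s\cdot\nabla\skj)\cdot\nabla_b f\,dx\right\rvert^2 \lesssim \frac{1}{n^3}\norm{H_s}^2 \lesssim \frac{\norm{\nabla_b f}_{C_b}^2}{n^3}\norm{B^n_s}^2;
\]
an analogous, cheaper $O(n^{-5})$ bound handles the transport noise. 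Itô's isometry together with the uniform moment bound from \autoref{well_posedness_estimates_MF} then gives $\expt{\lvert M_n\rvert^2}=O(n^{-3})\to 0$, so $M_n\to 0$ in probability.

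Collecting the limits produces the weak formulation of \autoref{weak_vlasov_pde} for $\overline\mu$. The initial condition $\overline\mu(0,x,db)=\delta_{\overline B_0(x)}(db)$ is inherited from $\tilde\mu_n(0)=\delta_{\overline B_0}$ via continuity in $C([0,T];\mathcal{Y})$, and the required second-moment integrability $\int_0^T\int_{\T^3\times\R^3}\lvert b\rvert^2\,d\overline\mu<\infty$ is \autoref{Limi_preserves_moments}. Uniqueness from \autoref{prop:uniqueness_vlasov} forces $\overline\mu=\mu$, which upgrades the subsequential Skorokhod convergence to convergence in probability of the full sequence on the original probability space. The main obstacle is the stretching martingale $M_n$: its quadratic variation only admits the trivial $O(\alpha_n)=O(1)$ bound from an $L^1$-Fourier estimate, so the whole argument hinges on the Plancherel-Bessel gain sketched above to extract the genuine $O(n^{-3})$ decay.
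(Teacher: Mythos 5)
Your proposal is correct and follows essentially the same route as the paper: pass to the limit in the weak Vlasov identity on the Skorokhod space, kill the transport noise by the crude $\sum|k|^{-5}=O(n^{-2})$ bound and the stretching noise by the Bessel/Plancherel gain on the dyadic shell applied to $B^n_s\otimes\nabla_b f(\cdot,B^n_s)$ (the paper's $F^n_f$), yielding the same $O(n^{-3})$ decay, then invoke \autoref{Limi_preserves_moments} and uniqueness from \autoref{prop:uniqueness_vlasov}. The only (inessential) difference is ordering: you extend to time-dependent test functions at the level of $\mu_n$ via integration by parts in time before passing to the limit, whereas the paper first takes the limit with time-independent $f$ and then recovers time-dependent test functions for $\overline{\mu}$ by a Riemann-sum argument.
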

   \begin{proof}
   We divide the proof in two steps.\\
   \emph{Step 1: test function independent of time}. Let us start showing that for each $f\in C^2_c(\T^3\times \R^3)$ and $t\in [0,T]$ \begin{multline}\label{limit_pde_weak_indep_time}
\int_{\T^3\times \R^3}f(x,b)(\overline{\mu}(t,x,db)-\delta_{\overline{B}_0(x)}(db)) dx\\ =
\int_0^t\int_{\T^3\times \R^3} \operatorname{div}_b\left(\mathcal{L}(b)\nabla_b f(x,b)\right)\overline{\mu}(s,x,db) dx ds  \quad \mathbb{P}-a.s.
   \end{multline}
By standard arguments, see for example \cite[Chapter 2]{flandoli2023stochastic}, also $\tilde{\mu}_n$ satisfies \eqref{vlasov_weak}.
    Now we pass to the limit in each term of \eqref{vlasov_weak}. Thanks to the a.s.  convergence of $\tilde{\mu}_n$ to $\overline{\mu}$ in $C([0,T];\mathcal{Y})$ we obtain
    \begin{align*}
    \int_{\T^3\times \R^3}f(x,b)(\tilde{\mu}_n(t,x,db)-\delta_{\overline{B}_0(x)}(db)) dx\rightarrow \int_{\T^3\times \R^3}f(x,b)(\overline{\mu}(t,x,db)-\delta_{\overline{B}_0(x)}(db)) dx\quad \mathbb{P}-a.s.    
    \end{align*}
    Secondly, as discussed at the beginning of the proof of \autoref{Limi_preserves_moments}, since $\eta
_n\rightarrow 0$, $\mathcal{L}^n(b)~\rightarrow~\mathcal{L}(b)$ uniformly on compact sets and $f$ has compact support we have
    \begin{align*}
    \left\lvert\eta_n\int_0^t\int_{\T^3\times \R^3} \Delta_x f(x,b)\tilde{\mu}_n(s,x,db) dx ds\right\rvert \lesssim \eta_n \norm{f}_{C^2_b}~\rightarrow 0,   
    \end{align*}
    \begin{align*}
       &\left\lvert\int_0^t\int_{\T^3\times \R^3} \operatorname{div}_b\left(\mathcal{L}^n(b)\nabla_b f(x,b)\right)\tilde{\mu}_n(s,x,db) dx ds-\int_0^t\int_{\T^3\times \R^3} \operatorname{div}_b\left(\mathcal{L}(b)\nabla_b f(x,b)\right)\overline{\mu}(s,x,db) dx ds\right\rvert\notag\\ & \leq  \left\lvert\int_0^t\int_{\T^3\times \R^3} \operatorname{div}_b\left(\left(\mathcal{L}^n(b)-\mathcal{L}(b)\right)\nabla_b f(x,b)\right)\tilde{\mu}_n(s,x,db) dx ds\right\rvert\notag\\ & +\left\lvert\int_0^t\int_{\T^3\times \R^3} \operatorname{div}_b\left(\mathcal{L}(b)\nabla_b f(x,b)\right)\left(\overline{\mu}(s,x,db)-\tilde{\mu}_n(s,x,db)\right) dx ds\right\rvert\rightarrow 0\quad \mathbb{P}-a.s.,
    \end{align*}
    where  we  used  that $\operatorname{div}_b\left(\mathcal{L}^n(b)-\mathcal{L}(b)\right)=0$ to show the convergence of the first summand.
    Concerning the stochastic integrals we argue by Burkholder-Davis-Gundy inequality obtaining
    \begin{align*}
&\expt{\lvert\sumkj\int_0^t\int_{\T^3\times \R^3} \skj(x)\cdot(\nabla_x f)(x,b)\tilde{\mu}_n(s,x,db)dxdW^{k,j}_s\rvert^2}\\  
& =\sumkj \expt{\int_0^t\lvert\int_{\T^3\times \R^3} \skj(x)\cdot(\nabla_x f)(x,b)\tilde{\mu}_n(s,x,db)dx \rvert^2 ds}\\ & \lesssim \sum_{\substack{k\in \Z^3_0\\ n\leq \lvert k\rvert \leq 2n} } \frac{1}{\lvert k\rvert^5}\norm{\nabla_x f}_{C_b}^2\rightarrow 0.
    \end{align*}
    For the second one we need to argue in a more precise way, using the fact that $\{a_{k,j}\otimes a_{k,l} e^{ik\cdot x}\}_{\substack{k\in \Z^3_0\\ j,l\in\{1,2,3\}}}$ is an orthonormal basis of $L^2(\T^3;\R^3\otimes \R^3)$. Let us define for each $f$ and $n$ the process
    \begin{align*}
        F^n_f(t,x)=\int_{\R^3}b\otimes \nabla_b f(x,b) \tilde{\mu}_n(t,x,db).
    \end{align*}
    Due to Jensen's inequality and \autoref{well_posedness_estimates_MF} \begin{align*}
     \expt{\sup_{t\in [0,T]}\norm{F^n_f(t)}_{L^2(\T^3)}^2}&\leq \expt{\sup_{t\in [0,T]}\int_{\T^3}\int_{\R^3}\lvert b\otimes \nabla_b f(x,b) \rvert ^2\tilde{\mu}_n(t,x,db) dx   }\\ & \leq  \norm{f}_{C^1_b}^2\expt{\sup_{t\in [0,T]}\int_{\T^3}\int_{\R^3}\lvert b\rvert ^2\tilde{\mu}_n(t,x,db) dx   }\\ & \leq \norm{f}_{C^1_b}^2 C_0(\overline{B}_0,T).
    \end{align*}
    Therefore
\begin{align*}
&\expt{\lvert\sumkj\int_0^t\int_{\T^3\times \R^3} b\cdot\nabla\skj(x)\nabla_b f(x,b)\Tilde{\mu}_n(s,x,db)dxdW^{k,j}_s \rvert^2}\\ & =\sumkj\expt{\int_0^t\lvert \int_{\T^3\times \R^3} b\cdot\nabla\skj(x)\nabla_b f(x,b)\Tilde{\mu}_n(s,x,db)dx\rvert^2 ds}\\ & =    \sumkj \lvert \tkj\rvert^2\lvert k\rvert^2\expt{\int_0^t \langle \frac{k}{\lvert k\rvert}\otimes a_{k,j} e^{ik\cdot x}, F^n_f(s,x) \rangle_{L^2(\T^3)}^2 ds}\\ & \leq \frac{1}{n^3}\expt{\int_0^T \norm{F^n_f(t)}_{L^2(\T^3)}^2 dt }\\ & \leq \frac{T}{n^3}\norm{f}_{C^1_b}^2 C_0(\overline{B}_0,T)\rightarrow 0.
\end{align*}
This completes the proof of the first step. \\
\emph{Step 2: Conclusion.}
By continuity in time of the objects and separability of $C^2_c(\T^3\times \R^3)$ we can find a zero probability set $\mathcal{N}$ such that on its complementary the PDE in weak form is satisfied for each $t\in [0,T],$  $f\in C^{2}_c(\T^3\times \R^3)$. Secondly, let us consider $f\in C^{1}_c((0,T);C^2_c(\T^3\times\R^3))$ and $0=t_0<t_1<\dots<t_N=T$ a partition of $[0,T]$ also denoted by $\pi$. Thanks to the identity 
\begin{multline*}
\int_{\T^3\times \R^3} f_{t_{i+1}}(x,b)\overline{\mu}(t_{i+1},x,db)dx-\int_{\T^3\times \R^3} f_{t_{i}}(x,b)\overline{\mu}(t_{i+1},x,db)dx\\=\int_{t_{i}}^{t_{i+1}}\int_{\T^3\times \R^3} \partial_s f_{s}(x,b)\overline{\mu}(t_{i+1},x,db)dxds
\end{multline*}
and relation \eqref{limit_pde_weak_indep_time} we obtain
\begin{align*}
\int_{\T^3\times \R^3}f_{i+1}(x,b)\overline{\mu}(t_{i+1},x,db)dx&-\int_{\T^3\times \R^3}f_{t_i}(x,b)\overline{\mu}(t_{i},x,db)dx\\ &=
\int_{t_{i}}^{t_{i+1}}\int_{\T^3\times \R^3} \partial_s f_{s}(x,b)\overline{\mu}(t_{i+1},x,db)dxds\\ &+\int_{t_{i}}^{t_{i+1}}\int_{\T^3\times \R^3} \operatorname{div}_b\left(\mathcal{L}(b)\nabla_b f_{t_i}(x,b)\right)\overline{\mu}(s,x,db) dx ds,  
\end{align*}
which implies
\begin{align*}
\int_{0}^{T}\int_{\T^3\times \R^3} \partial_s f_{s}(x,b)\overline{\mu}(s^{+}_{\pi},x,db)dxds+\int_{0}^{T}\int_{\T^3\times \R^3} \operatorname{div}_b\left(\mathcal{L}(b)\nabla_b f_{s^{-}_{\pi}}(x,b)\right)\overline{\mu}(s,x,db) dx ds=0,  
\end{align*}
where $s^{-}_{\pi}=t_i$ and $s^{+}_{\pi}=t_{i+1}$ if $s\in [t_i,t_{i+1})$. Taking a sequence $\pi_n$ with size going to $0$ we conclude the proof thanks to dominated convergence's theorem and the regularity of $\overline{\mu}$ and $f$.
\end{proof}
Combining \autoref{compactness laws}, \autoref{prop:uniqueness_vlasov} and \autoref{prop_limit_PDE} we can easily prove \autoref{main_thm_magnetic_field}.
\begin{proof}[Proof of \autoref{main_thm_magnetic_field}]
We denote by $\L_n$ the law of $\mu_n$ on $C([0,T];\mathcal{Y})$.
By \autoref{compactness laws}, \autoref{prop:uniqueness_vlasov} and \autoref{prop_limit_PDE}, every subsequence $\L_{n(k)}$ admits a sub-subsequence which converges to the unique limit point $\delta_\mu$, where $\mu$ is the deterministic solution of \eqref{limit_PDE_vlasov} in the sense of \autoref{weak_vlasov_pde}. Then, for example by \cite[Theorem 2.6]{Bil1999}, the whole sequence $\L_{n}$ converges weakly to $\delta_\mu$, and then also in probability as the limit point is a Dirac delta (see e.g. the argument in \cite[page 27]{Bil1999}). The last claim follows introducing a family of functions $f^{M,i}\in C^2_b(\T^3\times \R^3)$ for $i\in \{1,2,3\},\ M\in \N$ such that
\begin{align*}
     \norm{\nabla^2 f^{M,i}}_{C_b(\T^3\times \R^3)}\leq 1,\quad  \lvert f^{M,i}(x,b)\rvert\leq \lvert b\rvert,\quad f^{M,i}(x,b)=
        b^i \quad &\text{if } \lvert b \rvert\leq M.
\end{align*}
Using these $f^{M,i}$ as test functions
in \eqref{limit_pde_weak_indep_time} and letting $M\rightarrow+\infty$, by dominated convergence theorem we get the claim. The proof is complete.    
\end{proof}
\appendix
\section{Global Well-Posedness of equation \eqref{ito_PDE_magnetic}}\label{appendix PDE magnetic Field}
The proof of \autoref{well_posedness_estimates_MF} is a consequence of a vanishing viscosity procedure. In order to ease the notation, let us fix $\chi=1$. For each $n\in \N$, given a family of positive numbers $\eta\rightarrow 0$, let us introduce the approximate problem
\begin{align}\label{ito_PDE_viscous_magnetic}
 \begin{cases}
     dB^{n,\eta}_t&=(\frac{2}{3}\eta_n+\eta)\Delta B^{n,\eta}_t dt+\sumkj (\skj\cdot\nabla B^{n,\eta}_t-B^{n,\eta}_t\cdot\nabla \skj)dW^{k,j}_t,\\
     \div B^{n,\eta}_t&=0,\\
     B^{n,\eta}_0&=\overline{B}_0.
 \end{cases}   
\end{align}  
Analogously to \autoref{weak_solution MF} we introduced a notion of weak solution for the viscous problem \eqref{ito_PDE_viscous_magnetic}.
\begin{definition}\label{viscous_magnetic_weak_sol}
A stochastic process $C_{\mathcal{F}}(0,T;V)\cap L_{\mathcal{F}}^2(0,T;\mathbf{H}^2)$ is a weak solution of equation \eqref{ito_PDE_viscous_magnetic} if $\mathbb{P}-$a.s.
\begin{align*}
 \langle B^{n,\eta}_t,\phi\rangle-\langle \overline{B}_0,\phi\rangle&=-\left(\frac{2}{3}\eta_n+\eta\right)\int_0^t \langle \nabla B^{n,\eta}_s,\nabla \phi\rangle ds\\ &+\sumkj \int_0^t \langle\skj \cdot\nabla B^{n,\eta}_s - B^{n,\eta}_s\cdot \nabla \skj ,\phi\rangle dW^{k,j}_s   
\end{align*}
for each $\phi \in V,\ t\in [0,T]$.        
\end{definition}
The well-posedness of the stochastic PDE above is a well-known fact, see for example \cite[Chapters 3-5]{Flandoli_Book_95}. Indeed the following holds
\begin{theorem}\label{well-posedness_viscous_pde}
For each $\overline{B}_0\in V$ there exists a unique weak solution of \eqref{ito_PDE_viscous_magnetic} in the sense of \autoref{viscous_magnetic_weak_sol}.    
\end{theorem}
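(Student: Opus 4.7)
The plan is to invoke the classical variational theory for linear parabolic SPDEs with multiplicative noise, as developed for instance in \cite{Flandoli_Book_95}, realized on the Gelfand triple $\mathbf{H}^{2}\hookrightarrow V\hookrightarrow H$. For $n$ fixed the noise coefficients $\sigma^n_{k,j}$ are trigonometric polynomials and nonzero only for the finitely many $k$ with $n\leq |k|\leq 2n$, so the structural hypotheses of the theory hold trivially and the viscous parameter $\eta>0$ provides the coercivity.

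First, I would construct solutions via a Galerkin approximation. Let $\{e_m\}_{m\in\mathbb{N}}$ be the complete orthonormal basis of $H$ made by the divergence-free eigenfunctions of $-\Delta$ recalled in \autoref{Preliminaries}, and let $\Pi_N$ be the orthogonal projection onto $H_N=\mathrm{span}\{e_1,\dots,e_N\}$. The projected equation
\begin{align*}
dB^{n,\eta,N}_t &=\left(\tfrac{2}{3}\eta_n+\eta\right)\Delta B^{n,\eta,N}_t\,dt+\sumkj\Pi_N\!\left(\skj\cdot\nabla B^{n,\eta,N}_t-B^{n,\eta,N}_t\cdot\nabla\skj\right)dW^{k,j}_t, \\
B^{n,\eta,N}_0 &=\Pi_N \overline{B}_0,
\end{align*}
is a finite-dimensional linear SDE with Lipschitz coefficients, and hence admits a unique global strong solution.

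Second, I would derive a priori estimates uniform in $N$. Applying It\^o's formula to $\|B^{n,\eta,N}_t\|_V^2=\|\nabla B^{n,\eta,N}_t\|^2$, integrating by parts, and using $\operatorname{div}\skj=0$ to cancel the transport cross terms (exactly as in the derivation of \eqref{ito2magnetic}--\eqref{itogammamagnetic}), the drift produces $-2(\tfrac{2}{3}\eta_n+\eta)\|\Delta B^{n,\eta,N}\|^2$ plus It\^o corrections controlled by $C(n)\|B^{n,\eta,N}\|_V^2$, since only finitely many smooth modes contribute. For $\eta>0$ this is coercive at the $V$-level, and Burkholder--Davis--Gundy plus Gr\"onwall yield
\begin{align*}
\mathbb{E}\!\left[\sup_{t\in[0,T]}\|B^{n,\eta,N}_t\|_V^2\right]+\mathbb{E}\!\int_0^T\|B^{n,\eta,N}_t\|_{\mathbf{H}^2}^2\,dt\leq C(n,\eta,T)\|\overline{B}_0\|_V^2.
\end{align*}
By Banach--Alaoglu, up to a subsequence $B^{n,\eta,N}\to B^{n,\eta}$ weakly$^{*}$ in $L^2(\Omega;L^\infty(0,T;V))$ and weakly in $L^2_{\mathcal{F}}(0,T;\mathbf{H}^2)$. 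Linearity makes the passage to the limit routine: for $\phi\in\bigcup_M H_M$ both the bounded variation and the martingale part of $\langle B^{n,\eta,N}_t,\phi\rangle$ converge in probability, and the density of $\bigcup_M H_M$ in $V$ extends \autoref{viscous_magnetic_weak_sol} to all $\phi\in V$. Strong continuity in $V$ (not just weak) is recovered by a further application of It\^o's formula to $\|B^{n,\eta}_t\|_V^2$, justified by the $\mathbf{H}^2$ regularity.

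Uniqueness follows from linearity: the difference $D=B^{1}-B^{2}$ has zero initial datum and satisfies the same equation. It\^o's formula for $\|D_t\|^2$, combined with the isotropy identity $\sumkj\|\skj\cdot\nabla D\|^2=\tfrac{2}{3}\eta_n\|\nabla D\|^2$ underlying \eqref{ito2magnetic} (which cancels the transport contribution to the quadratic variation against the corresponding It\^o drift), reduces the estimate to
\begin{align*}
\frac{d}{dt}\mathbb{E}\|D_t\|^2 \leq C(n)\,\mathbb{E}\|D_t\|^2,
\end{align*}
whence $D\equiv 0$ by Gr\"onwall. The main obstacle is the bookkeeping in the $V$-level a priori estimate: one must verify that the It\^o corrections arising from the stretching $B\cdot\nabla\skj$ are of strictly lower order than the viscous dissipation $\eta\|\Delta B\|^2$, which holds for every $\eta>0$ with constants depending only on the finitely many active modes. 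Since these constants blow up as $n\to\infty$, the viscous problem is the correct intermediate object, after which the vanishing-viscosity limit $\eta\to 0$ (performed by reproducing the $n$-uniform estimates of \autoref{well_posedness_estimates_MF}) yields the solution of \eqref{ito_PDE_magnetic}.
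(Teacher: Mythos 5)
The paper offers no proof of this theorem — it is stated as a known fact with a citation to \cite[Chapters 3--5]{Flandoli_Book_95} — and your Galerkin/variational argument is precisely the standard proof behind that citation, with the $V$-level It\^o bookkeeping matching what the paper itself carries out (for the $\eta$-uniform version) in the proof of \autoref{prop_compactess_space_viscous}. One small correction to your bookkeeping: the second-order It\^o correction that must be weighed against the dissipation comes from the transport part $\skj\cdot\nabla B$, whose contribution $\tfrac{4}{3}\eta_n\norm{\Delta B}^2$ is exactly cancelled by the $\eta_n$-part of the drift leaving the coercive $-2\eta\norm{\Delta B}^2$, and not from the stretching $B\cdot\nabla\skj$, whose corrections are genuinely lower order; this does not affect the validity of your scheme, since for fixed $\eta>0$ the estimate closes either by the exact cancellation or by crude absorption.
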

\subsection{A priori estimates for the viscous equation}
In this section we provide for each $n\in \N$ some a priori estimates for $B^{n,\eta}$ uniformly in the viscosity parameter. 
\begin{proposition}\label{prop_compactess_space_viscous}
For each $\gamma\geq 4$ we have
\begin{align*}
    \expt{\sup_{t\in [0,T]}\norm{B^{n,\eta}_t}_V^\gamma}\lesssim_{n,\overline{B}_0,T,\gamma}1.
\end{align*}
\end{proposition}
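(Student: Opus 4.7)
The plan is to apply the It\^o formula first to $\norm{B^{n,\eta}_t}_V^2=\norm{B^{n,\eta}_t}^2+\norm{\nabla B^{n,\eta}_t}^2$ and then to $\norm{B^{n,\eta}_t}_V^\gamma$, and to close the resulting estimate by the Burkholder--Davis--Gundy inequality and Gr\"onwall's lemma. Since \autoref{well-posedness_viscous_pde} provides $B^{n,\eta}\in C_{\mathcal{F}}(0,T;V)\cap L^2_{\mathcal{F}}(0,T;\mathbf{H}^2)$, one may differentiate \eqref{ito_PDE_viscous_magnetic} in $x$ and apply It\^o in $H$ to the sufficiently regular process $\nabla B^{n,\eta}$; combined with the It\^o formula for $\norm{B^{n,\eta}_t}^2$ (the analogue of \eqref{ito2magnetic}, corrected by the extra drift $-2\eta\norm{\nabla B^{n,\eta}_t}^2\,dt$), this yields a closed stochastic differential identity for $\norm{B^{n,\eta}_t}_V^2$.

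The heart of the argument is an exact cancellation at the second-order level in the $H^1$ norm. The explicit drift produces
\[
-2\bigl(\tfrac{2}{3}\eta_n+\eta\bigr)\bigl(\norm{\nabla B^{n,\eta}_t}^2+\norm{\Delta B^{n,\eta}_t}^2\bigr)\,dt,
\]
while the quadratic variation of the stochastic term, restricted to its transport components $\sigma_{k,j}^n\cdot\nabla B^{n,\eta}$ and $\sigma_{k,j}^n\cdot\nabla(\partial_i B^{n,\eta})$, contributes $+\tfrac{4}{3}\eta_n\bigl(\norm{\nabla B^{n,\eta}_t}^2+\norm{\Delta B^{n,\eta}_t}^2\bigr)\,dt$ after summation over $k,j$. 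The mechanism is the pointwise isotropy identity
\[
\sumkj \sigma_{k,j}^n(x)\otimes\sigma_{-k,j}^n(x)=\tfrac{2}{3}\eta_n\,I,
\]
which is precisely what motivates the deterministic drift $\tfrac{2}{3}\eta_n\Delta$ appearing in \eqref{ito_PDE_magnetic}. Every transport--stretching cross term vanishes via the same divergence-freeness / complex-conjugation symmetry that eliminates such terms in \eqref{ito2magnetic}, and the residual lower-order contributions -- the pure stretching term $\sum_{k,j}\norm{B^{n,\eta}_t\cdot\nabla\sigma_{k,j}^n}^2$ together with the commutators produced when $\partial_i$ hits $\sigma_{k,j}^n$ in the gradient equation -- are bounded by $C(n)\norm{B^{n,\eta}_t}_V^2$ for a finite constant $C(n)$ depending on the $C^2$-bounds of the $\sigma_{k,j}^n$ but \emph{not} on $\eta$. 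Discarding the remaining non-positive viscous dissipation $-2\eta\bigl(\norm{\nabla B^{n,\eta}_t}^2+\norm{\Delta B^{n,\eta}_t}^2\bigr)\,dt$, we arrive at the energy inequality
\[
d\norm{B^{n,\eta}_t}_V^2\leq C(n)\norm{B^{n,\eta}_t}_V^2\,dt+dM_t,\qquad d[M]_t\leq C(n)\norm{B^{n,\eta}_t}_V^4\,dt,
\]
where $M$ is a local martingale and the constants are uniform in $\eta>0$.

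With this inequality in hand, apply It\^o's formula to $x\mapsto x^{\gamma/2}$ composed with $\norm{B^{n,\eta}_t}_V^2$, localize via the stopping times $\tau_N:=\inf\{t\in[0,T]:\norm{B^{n,\eta}_t}_V\geq N\}$ so that every local martingale becomes a genuine martingale and every expectation is finite, take the expected supremum on $[0,t\wedge\tau_N]$, apply Burkholder--Davis--Gundy to the martingale part, and use Cauchy--Schwarz followed by Young's inequality to absorb $\tfrac{1}{2}\EE\bigl[\sup_{s\le t\wedge\tau_N}\norm{B^{n,\eta}_s}_V^\gamma\bigr]$ onto the left-hand side. This yields an integral Gr\"onwall inequality for the function $t\mapsto \EE\bigl[\sup_{s\leq t\wedge\tau_N}\norm{B^{n,\eta}_s}_V^\gamma\bigr]$ with constants depending only on $n,\gamma,\overline{B}_0,T$; Gr\"onwall's lemma gives a bound uniform in $N$, and Fatou's lemma as $N\to\infty$ delivers the proposition. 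The main obstacle is the cancellation of the previous paragraph: one must carefully enumerate every second-order-in-$B^{n,\eta}$ term produced by the It\^o expansion of $\norm{\nabla B^{n,\eta}_t}^2$ and verify, via the noise symmetries, that everything not explicitly compensating the viscous drift is bounded linearly in $\norm{B^{n,\eta}_t}_V^2$; once this bookkeeping is settled the remainder of the argument is a standard moment estimate.
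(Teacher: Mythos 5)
Your proposal is correct and follows essentially the same route as the paper: an It\^o expansion of the $H^1$-energy in which the second-order contribution of the transport part of the quadratic variation (computed via the isotropy identity $\sumkj \skj\otimes\smkj=\tfrac{2}{3}\eta_n I$) exactly cancels the $\tfrac{2}{3}\eta_n\Delta$ drift, the transport--stretching cross terms vanish by divergence-freeness and conjugation symmetry, the residual terms are bounded by $C(n)\norm{B^{n,\eta}_t}_V^2$ uniformly in $\eta$, and the moment estimate is closed by localization, Burkholder--Davis--Gundy, Young and Gr\"onwall. The only cosmetic difference is that the paper applies the infinite-dimensional It\^o formula in $V$ directly to $\norm{\nabla B^{n,\eta}_t}^2$ rather than differentiating the equation and working in $H$, which changes nothing of substance.
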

\begin{proof}
We divide the proof of this proposition in many steps.\\
\emph{Step 1: It\^o formula.}
    By \autoref{viscous_magnetic_weak_sol}
    \begin{align*}
        \int_0^T \norm{\Delta B^{n,\eta}_t}^2 dt<+\infty \quad \mathbb{P}-a.s.
    \end{align*}
    and 
    \begin{align*}
        \sumkj \int_0^t \skj\cdot\nabla B^{n,\eta}_s-B^{n,\eta}_s\cdot\nabla\skj dW^{k,j}_s
    \end{align*}
    is a $V$ valued local martingale. Therefore we can apply It\^o formula in $V$, see \cite[Theorem 2.13]{linear_theory_rozovsky_lotosky}, obtaining
    \begin{align*}
        d\norm{\nabla B^{n,\eta}_t}^2+2\left(\frac{2}{3}\eta_n+\eta\right) \norm{\Delta B^{n,\eta}_t}^2dt&=dM^{n,\eta}_t+I^{n,\eta}_tdt
    \end{align*}
    where
    \begin{align*}
        dM^{n,\eta}_t&:=2\sumkj \langle \nabla (\skj\cdot\nabla B^{n,\eta}_t-B^{n,\eta}_t\cdot\nabla\skj),\nabla B^{n,\eta}_t\rangle dW^{k,j}_t,\\
        I^{n,\eta}_t&:=2\sumkj \langle \nabla (\skj\cdot\nabla B^{n,\eta}_t-B^{n,\eta}_t\cdot\nabla\skj), \nabla (\smkj\cdot\nabla B^{n,\eta}_t-B^{n,\eta}_t\cdot\nabla\smkj) \rangle.
    \end{align*}
Since $\skj$ are divergence free the second order term in $B^{n,\eta}$ cancels, i.e.
\begin{align*}
    \langle \skj\cdot \nabla^2 B^{n,\eta}_t,\nabla B^{n,\eta}_t\rangle = 0. 
\end{align*}
Therefore 
\begin{align*}
    dM^{n,\eta}_t=2\sumkj\langle \nabla \skj \cdot\nabla B^{n,\eta}_t-\nabla B^{n,\eta}_t\cdot\nabla \skj-B^{n,\eta}_t\cdot\nabla^2\skj,\nabla B^{n,\eta}_t\rangle dW^{k,j}_t.
\end{align*}
Let us write better also $I^{n,\eta}_t$
\begin{align*}
I^{n,\eta}_t&=I^{0,n,\eta}_t+I^{1,n,\eta}_t+I^{2,n,\eta}_t+I^{3,n,\eta}_t+I^{4,n,\eta}_t,
\end{align*}
where
\begin{align*}
    I^{0,n,\eta}_t&=2\sumkj\langle \nabla\skj\cdot\nabla B^{n,\eta}_t, \nabla\smkj\cdot\nabla B^{n,\eta}_t-\nabla B^{n,\eta}_t\cdot\nabla \smkj\rangle\\ 
    & -2\sumkj \langle \nabla B^{n,\eta}_t\cdot\nabla\skj,\nabla\smkj\cdot\nabla B^{n,\eta}_t-\nabla B^{n,\eta}_t\cdot\nabla \smkj\rangle
    \\ & +2\sumkj\langle B^{n,\eta}_t\cdot \nabla^2 \skj,B^{n,\eta}_t\cdot\nabla^2 \smkj\rangle,\\
    I^{1,n,\eta}_t&=-2\sumkj\left(\langle \skj\cdot\nabla^2 B^{n,\eta}_t ,B^{n,\eta}_t\cdot\nabla \smkj\rangle+\langle B^{n,\eta}_t\cdot\nabla \skj,\smkj\cdot\nabla^2 B^{n,\eta}_t, \rangle\right)\\
    I^{2,n,\eta}_t&=2\sumkj\langle \skj\cdot\nabla^2 B^{n,\eta}_t,\nabla\smkj\cdot\nabla B^{n,\eta}_t-\nabla B^{n,\eta}_t\cdot\nabla\smkj\rangle\\ & +2\sumkj\langle \nabla\skj\cdot\nabla B^{n,\eta}_t-\nabla B^{n,\eta}_t\cdot\nabla\skj,\smkj\cdot\nabla^2 B^{n,\eta}_t\rangle,\\
    I^{3,n,\eta}_t&=-2\sumkj \langle \nabla\skj\cdot\nabla B^{n,\eta}_t-\nabla B^{n,\eta}_t\cdot\nabla\skj, B^{n,\eta}_t\cdot\nabla^2\smkj\rangle\\ & -2\sumkj \langle \nabla\smkj\cdot\nabla B^{n,\eta}_t-\nabla B^{n,\eta}_t\cdot\nabla\smkj, B^{n,\eta}_t\cdot\nabla^2\skj\rangle
    \\
    I^{4,n,\eta}_t&=2\sumkj \langle \skj\cdot\nabla^2 B^{n,\eta}_t, \smkj\cdot\nabla^2 B^{n,\eta}_t \rangle.
\end{align*}
Now we show that $I^{1,n,\eta}_t=I^{2,n,\eta}_t=I^{3,n,\eta}_t=0,\ I^{4,n,\eta}_t=\frac{4}{3}\norm{\Delta B^{n,\eta}_t}^2$.
Expanding the definition of each summand in $I^{1,n,\eta}_t$ we have
\begin{align*}
\langle \skj\cdot\nabla^2 B^{n,\eta}_t ,B^{n,\eta}_t\cdot\nabla \smkj\rangle&+\langle B^{n,\eta}_t\cdot\nabla \skj,\smkj\cdot\nabla^2 B^{n,\eta}_t, \rangle\\ &=-i\lvert \tkj\rvert^2\sum_{\alpha,\beta,\gamma,\theta=1}^3\int_{\T^3} (a_{k,j})_{\alpha}\partial_{\alpha,\beta}(B^{n,\eta}_t)_{\gamma}(B^{n,\eta}_t)_{\theta}(k)_{\theta}(a_{k,j})_{\gamma} dx\\ & +i\lvert \tkj\rvert^2\sum_{\alpha,\beta,\gamma,\theta=1}^3\int_{\T^3} (a_{k,j})_{\alpha}\partial_{\alpha,\beta}(B^{n,\eta}_t)_{\gamma}(B^{n,\eta}_t)_{\theta}(k)_{\theta}(a_{k,j})_{\gamma} dx=0.    
\end{align*}
The analysis of $I^{2,n,\eta}_t,\ I^{3,n,\eta}_t$ is analogous and we omit the detailed computations. Concerning $I^{4,n,\eta}_t$, we have, recalling that $\sumkj \lvert \tkj\rvert^2 (a_{k,j})_{\alpha}(a_{k,j})_{\beta}=\frac{2}{3}\eta_n\delta_{\alpha,\beta}$,
\begin{align*}
I^{4,n,\eta}_t&=\frac{4}{3}\eta_n \sum_{\alpha,\beta,\gamma=1}^3 \langle \partial_{\alpha,\beta} (B^{n,\eta}_t)_{\gamma},\partial_{\alpha,\beta} (B^{n,\eta}_t)_{\gamma}\rangle\\ & =\frac{4}{3}\eta_n \norm{\Delta B^{n,\eta}_t}^2.    
\end{align*}
In conclusion, so far we showed that
\begin{align}\label{ito_viscous}
d\norm{\nabla B^{n,\eta}_t}^2+2\eta \norm{\Delta B^{n,\eta}_t}^2dt&=dM^{n,\eta}_t+I^{0,n,\eta}_tdt.    
\end{align}
Therefore, applying finite dimensional It\^o formula to relation \eqref{ito_viscous} we obtain for $\gamma\geq 4$
\begin{align}\label{ito_further_powers_viscous}
    d&\norm{\nabla B^{n,\eta}_t}^\gamma+\eta\gamma\norm{\nabla B^{n,\eta}_t}^{\gamma-2}\norm{\Delta B^{n,\eta}_t}^2dt =\frac{\gamma}{2}\norm{\nabla B^{n,\eta}_t}^{\gamma-2} I^{0,n,\eta}_t dt+\frac{\gamma}{2}\norm{\nabla B^{n,\eta}_t}^{\gamma-2}dM^{n,\eta}_t\notag\\ &+\gamma(\gamma-2)\norm{\nabla B^{n,\eta}_t}^{\gamma-4}\sumkj\left\lvert\langle \nabla \skj \cdot\nabla B^{n,\eta}_t-\nabla B^{n,\eta}_t\cdot\nabla \skj-B^{n,\eta}_t\cdot\nabla^2\skj,\nabla B^{n,\eta}_t\rangle\right\rvert^2 dt.
\end{align}
\emph{Step 2: Proof of the estimates.}
The reason why expressions \eqref{ito_viscous} and \eqref{ito_further_powers_viscous} allow to obtain estimates independent in the viscosity is due to the fact do not appear second order derivatives in $B^{n,\eta}_t$ in each term in the right hand side. For each $M\geq 1,\ n,\ \eta$ let $\tau_M=\inf\{t\in [0,T]:\ \norm{\nabla B^{n,\eta}_t}^2\geq M\}\wedge T$.
Integrating \eqref{ito_viscous} between $0$ and $r\leq \tau^M$ and taking the expected value of the supremum in time for $r\leq t\wedge \tau_M$ we obtain, by Burkholder-Davis-Gundy inequality and H\"older inequality, since only finite $\skj$ are different from $0$
\begin{align*}
    \expt{\sup_{r\leq t}\norm{\nabla B^{n,\eta}_{r \wedge \tau_M}}^2}&=\expt{\sup_{r\leq t\wedge \tau_M}\norm{\nabla B^{n,\eta}_r}^2}\\ & \leq \norm{\nabla \overline{B}_0}^2+\expt{\int_0^{t\wedge \tau_M}\lvert I^{0,n,\eta}_s\rvert ds}+\expt{\left(\int_0^{t\wedge \tau_M}d[M^{n,\eta},M^{n,\eta}]_s ds\right)^{1/2}}\\ &\leq \norm{\nabla \overline{B}_0}^2+C_{n}\expt{\int_0^{t\wedge\tau_M}\norm{\nabla B^{n,\eta}_s}^2 ds}\\ &\quad\quad\quad\qquad+C_{n}\expt{\left(\int_0^{t\wedge\tau_M}\norm{\nabla B^{n,\eta}_s}^4 ds\right)^{1/2}}\\ & \leq \norm{\nabla \overline{B}_0}^2+C_{n}\expt{\int_0^{t\wedge\tau_M}\norm{\nabla B^{n,\eta}_s}^2 ds}\\ &\quad\quad\quad\qquad+C_{n}\expt{\sup_{r\leq t\wedge\tau_M}\norm{\nabla B^{n,\eta}_r}\left(\int_0^{t\wedge\tau_M}\norm{\nabla B^{n,\eta}_s}^2 ds\right)^{1/2}}\\ & \leq \norm{\nabla \overline{B}_0}^2+\frac{1}{2}\expt{\sup_{r\leq t\wedge\tau_M}\norm{\nabla B^{n,\eta}_r}^2}+C_{n}\expt{\int_0^{t\wedge\tau_M}\norm{\nabla B^{n,\eta}_s}^2 ds},
\end{align*}
where the last step follows by Young's inequality. In conclusion, by simple manipulations, we proved that
\begin{align*}
    \expt{\sup_{r\leq t}\norm{\nabla B^{n,\eta}_{r \wedge \tau_M}}^2}\leq 2\norm{\nabla \overline{B}_0}^2+C_n\expt{\int_0^t \operatorname{sup}_{r\leq s} \norm{\nabla B^{n,\eta}_{r \wedge \tau_M}}^2 ds}.
\end{align*}
By Gr\"onwall's lemma, the latter implies
\begin{align*}
    \expt{\sup_{r\leq T\wedge \tau_M}\norm{\nabla B^{n,\eta}_r}^2}=\expt{\sup_{r\leq T}\norm{\nabla B^{n,\eta}_{r \wedge \tau_M}}^2}\lesssim_{n,\overline{B}_0,T}1.
\end{align*}
Since $\tau_M\rightarrow T$ as $M\rightarrow +\infty$ due to the fact that $B^{n,\eta}\in C([0,T];V)$, letting $M\rightarrow +\infty$ the latter implies the claim for $\gamma=2$ by monotone convergence. The proof of the claim for $\gamma\geq 4$ is analogous, only starting by \eqref{ito_further_powers_viscous} in place of \eqref{ito_viscous} and we omit the easy details.
\end{proof}
\begin{proposition}\label{prop_compactness_time_viscius}
For each $\beta\geq 4,s_1\in (0,\frac{1}{2}),r_1>2$ such that $r_1s_1>1$ we have
\begin{align*}
    \expt{\int_0^T\int_0^T \frac{\norm{B^{n,\eta}_t-B^{n,\eta}_s}_{\mathbf{H}^{-\beta}}^{r_1}}{\lvert t-s\rvert^{1+r_1s_1}}dt ds}\lesssim_{n,\overline{B}_0,\beta,r_1,s_1,T} 1.
\end{align*}
\end{proposition}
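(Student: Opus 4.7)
The plan is to reduce, via Fubini's theorem, to the pointwise-in-$(t,s)$ moment bound $\expt{\norm{B^{n,\eta}_t-B^{n,\eta}_s}_{\mathbf{H}^{-\beta}}^{r_1}}\lesssim_{n,\overline{B}_0,T,r_1}\lvert t-s\rvert^{r_1/2}$, and then to integrate against the Slobodeckij kernel. Starting from the weak formulation \autoref{viscous_magnetic_weak_sol} and using that $\beta\geq 4\geq 2$ so that $\Delta:H\to\mathbf{H}^{-2}\hookrightarrow\mathbf{H}^{-\beta}$ continuously, one obtains the $\mathbf{H}^{-\beta}$-valued identity
\begin{align*}
B^{n,\eta}_t-B^{n,\eta}_s=\left(\tfrac{2}{3}\eta_n+\eta\right)\int_s^t\Delta B^{n,\eta}_r\,dr+\sumkj\int_s^t\left(\skj\cdot\nabla B^{n,\eta}_r-B^{n,\eta}_r\cdot\nabla\skj\right)dW^{k,j}_r,
\end{align*}
which I would estimate term by term.

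For the deterministic drift, $\norm{\Delta v}_{\mathbf{H}^{-\beta}}\lesssim\norm{v}$ combined with Jensen's inequality yields an $r_1$-th moment of order $\lvert t-s\rvert^{r_1}\lesssim_T\lvert t-s\rvert^{r_1/2}$, once \autoref{prop_compactess_space_viscous} is invoked to bound $\expt{\sup_r\norm{B^{n,\eta}_r}^{r_1}}$. For the stochastic integral, I would apply the Burkholder-Davis-Gundy inequality for $\mathbf{H}^{-\beta}$-valued martingales: integration by parts against $\phi\in\mathbf{H}^\beta$ using $\div\skj=0$ gives
\begin{align*}
\norm{\skj\cdot\nabla B^{n,\eta}_r-B^{n,\eta}_r\cdot\nabla\skj}_{\mathbf{H}^{-\beta}}\lesssim\norm{\skj}_{C^1_b}\norm{B^{n,\eta}_r}
\end{align*}
(valid since $\beta\geq 1$), and since for fixed $n$ only finitely many $\skj$ are non-zero, BDG produces
\begin{align*}
\expt{\norm{\textstyle\sumkj\int_s^t(\skj\cdot\nabla B^{n,\eta}_r-B^{n,\eta}_r\cdot\nabla\skj)dW^{k,j}_r}_{\mathbf{H}^{-\beta}}^{r_1}}\lesssim_{n,r_1}\lvert t-s\rvert^{r_1/2}\expt{\sup_{r\in[0,T]}\norm{B^{n,\eta}_r}^{r_1}},
\end{align*}
and the right-hand factor is again controlled by \autoref{prop_compactess_space_viscous} with the choice $\gamma=\max(r_1,4)\geq 4$.

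Combining both contributions and exchanging expectation with the integral via Fubini, the double integral in the statement is majorized by $\int_0^T\int_0^T\lvert t-s\rvert^{r_1(1/2-s_1)-1}\,dt\,ds$, which is finite exactly when $r_1(1/2-s_1)-1>-1$, i.e.\ when $s_1<1/2$, the very hypothesis of the proposition. The only substantive step is the Hilbert-valued BDG applied together with the divergence-free integration by parts that transfers the spatial derivative off $B^{n,\eta}$ onto the test function (this is where $\beta\geq 1$ is used); everything else is bookkeeping based on the uniform-in-$\eta$ bounds of \autoref{prop_compactess_space_viscous}. Note that the assumption $r_1s_1>1$ does not enter the present estimate; it is presumably used downstream, through a Sobolev embedding in time, to promote the Slobodeckij bound to tightness of the laws of $\{B^{n,\eta}\}_\eta$ in $C([0,T];\mathbf{H}^{-\beta})$.
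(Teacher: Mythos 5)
Your proof is correct, but it takes a genuinely different route from the paper's. The paper never forms the $\mathbf{H}^{-\beta}$-valued identity: it tests the weak formulation against each Fourier mode $a_{h,l}e^{ih\cdot x}$ separately, obtains the scalar bound $\expt{\lvert\langle B^{n,\eta}_t-B^{n,\eta}_s,a_{h,l}e^{ih\cdot x}\rangle\rvert^{r_1}}\lesssim_{n}\lvert h\rvert^{r_1}\lvert t-s\rvert^{r_1/2}$ via one-dimensional Burkholder--Davis--Gundy, and then reassembles the $\mathbf{H}^{-\beta}$ norm by writing $\norm{\cdot}_{\mathbf{H}^{-\beta}}^{r_1}$ as the $r_1/2$ power of a lattice sum and applying H\"older's inequality over the lattice to pull the power inside; the hypothesis $\beta\geq 4$ is consumed precisely there, to make the resulting weighted sum $\sum_h \lvert h\rvert^{-(\beta-1)r_1+3(1+\eps)(r_1-2)/2}$ converge. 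Your argument replaces this mode-by-mode bookkeeping with a single application of the Hilbert-space-valued BDG inequality in $\mathbf{H}^{-\beta}$, after transferring the derivative off $B^{n,\eta}$ by duality using $\div\skj=0$; since for fixed $n$ only finitely many $\skj$ are nonzero, the quadratic variation is controlled by $\lvert t-s\rvert\sup_r\norm{B^{n,\eta}_r}^2$ and \autoref{prop_compactess_space_viscous} (with $\gamma=\max(r_1,4)$, plus Jensen when $r_1<4$) closes the estimate. This buys a cleaner proof that in fact only needs $\beta\geq 2$ (for $\norm{\Delta v}_{\mathbf{H}^{-\beta}}\lesssim\norm{v}$) rather than $\beta\geq 4$, at the price of invoking the infinite-dimensional BDG inequality, which the paper's elementary scalar route avoids. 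Your closing remark is also accurate: $r_1s_1>1$ plays no role in this estimate and is only used later, through the compact embedding of \autoref{lemma_compactness_1}, to extract tightness. One small point to make explicit if you write this up: the martingale is a sum of integrals against correlated complex Brownian motions ($[W^{k,j},W^{-k,j}]_t=2t$), so the quadratic variation contains the cross terms $k\leftrightarrow -k$; this only changes constants, and the paper glosses over the same point.
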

\begin{proof}
We start estimating $ \expt{\lvert\langle B^{n,\eta}_t-B^{n,\eta}_s,a_{h,l}e^{ih\cdot x}\rangle\rvert^{r_1}}$ for each $h\in \Z^3_0,\ l\in \{1,2\}$.
Thanks to the weak formulation satisfied by $B^{n,\eta}_t$ we have
\begin{align*}
 \langle B^{n,\eta}_t-B^{n,\eta}_s,a_{h,l}e^{ih\cdot x}\rangle&=\left(\frac{2}{3}\eta_n+\eta\right)\int_s^t \langle \nabla B^{n,\eta}_r,h\otimes a_{h,l}e^{ih\cdot x}\rangle dr\\ &+\sumkj \int_s^t \langle\skj \cdot\nabla B^{n,\eta}_r - B^{n,\eta}_r\cdot \nabla \skj ,a_{h,l}e^{ih\cdot x}\rangle dW^{k,j}_r.   
\end{align*}    
Taking the modulus of the expression above and raising to the power $r_1$ we can estimate the quantity required by Burkholder-Davis-Gundy inequality. Indeed, thanks to H\"older inequality and \autoref{prop_compactess_space_viscous}
\begin{align}\label{step_1_compt_time_viscous}
\expt{\lvert\langle B^{n,\eta}_t-B^{n,\eta}_s,a_{h,l}e^{ih\cdot x}\rangle\rvert^{r_1}}&\lesssim_{r_1,n}\lvert h\rvert^r\expt{\left(\int_s^t\norm{\nabla B^{n,\eta}_r}dr\right)^{r_1}}+\expt{\left(\int_s^t \norm{\nabla B^{n,\eta}_r}^{2}dr\right)^{r_1/2}}\notag\\ & \leq \lvert h\rvert^{r_1}\lvert t-s\rvert^{r_1}\expt{\sup_{r\in [0,T]}\norm{\nabla B^{n,\eps}_r}^{r_1}}+\lvert t-s\rvert^{\frac{r_1}{2}}\expt{\sup_{r\in [0,T]}\norm{\nabla B^{n,\eps}_r}^{r_1}} \notag \\ & \lesssim_{n,\overline{B}_0,T,r_1} \lvert h\rvert^{r_1}\lvert t-s\rvert^{\frac{r_1}{2}}. 
\end{align}
Secondly, by definition of $\mathbf{H}^{-\beta}$ norm and H\"older's inequality for some $\eps>0$ small enough
\begin{align}\label{step_2_compt_time_viscous}
\expt{\norm{B^{n,\eta}_t-B^{n,\eta}_s}^{r_1}_{\mathbf{H}^{-\beta}}}&=\expt{\left(\sum_{\substack{h\in \Z^3_0\\ l\in \{1,2\}}}\frac{\langle B^{n,\eta}_t-B^{n,\eta}_s,a_{h,l}e^{ih\cdot x} \rangle^2}{\lvert h\rvert^{2\beta}} \right)^{\frac{r_1}{2}}} \notag  \\ & =\expt{\left(\sum_{\substack{h\in \Z^3_0\\ l\in \{1,2\}}}\frac{\langle B^{n,\eta}_t-B^{n,\eta}_s,a_{h,l}e^{ih\cdot x} \rangle^2}{\lvert h\rvert^{\frac{3(1+\eps)(r_1-2)}{r_1}}\lvert h\rvert^{\frac{2\beta r_1-3(1+\eps)(r_1-2)}{r_1}}} \right)^{\frac{r_1}{2}}}\notag\\ & \leq \left(\sum_{\substack{h\in \Z^3_0\\ l\in \{1,2\}}}\frac{1}{\lvert h\rvert^{3(1+\eps)}}\right)^{\frac{r_1-2}{2}}   \expt{\sum_{\substack{h\in \Z^3_0\\ l\in \{1,2\}}}\frac{\langle B^{n,\eta}_t-B^{n,\eta}_s,a_{h,l}e^{ih\cdot x} \rangle^{r_1}}{\lvert h\rvert^{\beta r_1-\frac{3(1+\eps)(r_1-2)}{2}}} }\notag\\ & \lesssim_{r_1}\expt{\sum_{\substack{h\in \Z^3_0\\ l\in \{1,2\}}}\frac{\langle B^{n,\eta}_t-B^{n,\eta}_s,a_{h,l}e^{ih\cdot x} \rangle^{r_1}}{\lvert h\rvert^{\beta r_1 -\frac{3(1+\eps)(r_1-2)}{2}}} }.
\end{align}
Combining \eqref{step_1_compt_time_viscous} and \eqref{step_2_compt_time_viscous} we get immediately the claim of the proposition. Indeed
\begin{multline*}
    \expt{\int_0^T\int_0^T \frac{\norm{B^{n,\eta}_t-B^{n,\eta}_s}_{\mathbf{H}^{-\beta}}^{r_1}}{\lvert t-s\rvert^{1+r_1s_1}}dt ds}\\ \lesssim_{n,\overline{B}_0,T,r_1}\sum_{\substack{h\in \Z^3_0\\ l\in \{1,2\}}}\frac{1}{\lvert h\rvert^{(\beta-1)r_1-\frac{3(1+\eps)(r_1-2)}{2}}}\int_0^T\int_0^T\frac{1}{\lvert t-s\rvert^{1+r_1(s_1-\frac{1}{2})}}dtds.
\end{multline*}
The latter being finite as soon as $s_1\in (0,\frac{1}{2})$ and
\begin{align*}
 (\beta-1)r_1-\frac{3(1+\eps)(r_1-2)}{2}>3.   
\end{align*}
The relation above is always satisfied if $\beta\geq 4$ for each choice of $r_1>2 $ and $\eps\in (0,1)$.
\end{proof}
\subsection{Tightness and limit object}
Let us denote by \begin{align*}
    \mathbb{B}_R=\{x\in V: \norm{x}_V\leq R\}
\end{align*}
and $q$ the metric on $\mathbb{B}_R$ compatible with the weak topology. Let us introduce the space
\begin{align*}
    C([0,T];\mathbb{B}_R)=\{f\in C([0,T];V_w):\ \norm{f(t)}_V\leq R\quad \forall t\in [0,T]\}.
\end{align*}
The space $C([0,T];\mathbb{B}_R)$ is metrizable and complete if endowed with the metric
\begin{align*}
d(u,v)=\sup_{t\in [0,T]}q(u(t),v(t))    
\end{align*}
see \cite{Brez_book_f}, 
\cite{brzezniak2013existence}, \cite{brzezniak2021stochastic}.
Secondly we denote by \begin{align*}
    Z_T=C([0,T];H)\cap C([0,T];V_w).
\end{align*}
endowed with the supremum of the corresponding topologies, i.e. the coarsest topology on $Z_T$ that is finer either than the one in $C([0,T];H)$ and the one in $C([0,T];V_w).$ 
We are interested to introduce compact sets in $Z_T$. For this reason we recall the following results, see \cite[Lemma 2.1]{brzezniak2014existence} and \cite[Corollary 5]{simon1986compact}.
\begin{theorem}\label{lemma_abstract_convergence_1}
 Let $u_k\in C([0,T];V_w)$ such that\begin{align*}
     \sup_k \sup_{t\in [0,T]}\norm{ u_k(t)}_V\leq R,\quad u_k\rightarrow u\quad\text{in } C([0,T];H).
 \end{align*}
 Then $u_k,u \in C([0,T];\mathbb{B}_r),\ u_k\rightarrow u $ in $C([0,T];\mathbb{B}_r)$.
\end{theorem}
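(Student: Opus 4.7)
The plan is to exploit the compact embedding $V = \mathbf{H}^1 \hookrightarrow H = \mathbf{H}^0$ (Rellich--Kondrachov on the torus) together with the reflexivity of $V$. The central observation I intend to use is that on the norm-bounded set $\mathbb{B}_R$, the weak-$V$ topology and the norm-$H$ topology coincide; once this is established, the desired uniform convergence follows almost automatically from the hypothesized uniform $H$-convergence. (I assume the $r$ in the conclusion is a typo for the $R$ bounding the sequence in $V$.)

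First I would verify that both $u_k$ and $u$ actually lie in $C([0,T]; \mathbb{B}_R)$. For the $u_k$ this is immediate: by hypothesis each $u_k$ is weakly $V$-continuous and satisfies $\norm{u_k(t)}_V \leq R$ for every $t$. For the limit $u$ I would fix $t$ and combine $u_k(t) \to u(t)$ in $H$ with $\{u_k(t)\}_k \subset \mathbb{B}_R$: weak-$V$ compactness extracts a subsequence $u_{k_j}(t) \rightharpoonup w$ in $V$; the continuous embedding $V \hookrightarrow H$ transports this to weak-$H$ convergence, forcing $w = u(t)$; and weak lower semi-continuity of $\norm{\cdot}_V$ then yields $u(t) \in \mathbb{B}_R$. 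Uniqueness of the subsequential limit upgrades this to full-sequence weak convergence $u_k(t) \rightharpoonup u(t)$ in $V$ for every fixed $t$, giving in particular $u \in C([0,T]; \mathbb{B}_R)$ since weak-$V$ continuity on $\mathbb{B}_R$ follows from $H$-continuity by the topological claim below.

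Next I would prove the key topological claim: on $\mathbb{B}_R$, the weak-$V$ and norm-$H$ topologies agree. One direction uses the compact embedding $V \hookrightarrow H$ (weakly convergent $V$-bounded sequences converge strongly in $H$); the reverse is exactly the argument of the previous paragraph. Because $\mathbb{B}_R$ is weakly $V$-compact by reflexivity of $V$, and the norm-$H$ topology is Hausdorff and coarser, the identity map $(\mathbb{B}_R, q) \to (\mathbb{B}_R, \norm{\cdot}_H)$ is a continuous bijection from a compact space to a Hausdorff space, hence a homeomorphism. As a homeomorphism between compact metric spaces, it is uniformly continuous in both directions.

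Finally I would conclude the uniform convergence. Given $\varepsilon > 0$, uniform continuity produces $\delta > 0$ such that $v, w \in \mathbb{B}_R$ with $\norm{v - w}_H < \delta$ implies $q(v, w) < \varepsilon$. Choosing $k$ large enough that $\sup_{t \in [0,T]} \norm{u_k(t) - u(t)}_H < \delta$ then gives $d(u_k, u) = \sup_{t \in [0,T]} q(u_k(t), u(t)) < \varepsilon$. The main subtlety — and where I expect the argument to demand care — is precisely this passage from pointwise weak-$V$ convergence to uniform weak-$V$ convergence: it is not automatic and genuinely relies on the weak compactness of $\mathbb{B}_R$ (equivalently, the metrizability of the weak topology on bounded sets of the separable Hilbert space $V$). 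Every other step is soft functional-analytic bookkeeping.
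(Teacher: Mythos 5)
Your proof is correct. Note that the paper does not actually prove this statement: it is quoted verbatim from the literature (Lemma 2.1 of the cited work of Brze\'zniak et al.), so there is no in-paper argument to compare against. Your route --- establishing that the weak-$V$ and norm-$H$ topologies coincide on $\mathbb{B}_R$ via the continuous bijection from the weakly compact ball onto a Hausdorff space, and then invoking uniform continuity of the inverse identity map between compact metric spaces to upgrade uniform $H$-convergence to uniform $q$-convergence --- is a clean, self-contained proof; the only cosmetic issues are the harmless $r$ versus $R$ typo you already flagged and the fact that the pointwise weak-$V$ convergence paragraph is subsumed by the final uniform-continuity step.
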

\begin{theorem}\label{lemma_compactness_1}
    Let $p,r \in [1,+\infty]$ and $s\in \mathbb R$; assume that $s>0$ if $r\geq p$ or $s>1/r-1/p$ if $r\leq p$.
    Let $X,\ B,\ Y$ be separable Banach spaces such that \begin{align*}
    X\stackrel{c}{\hookrightarrow}B\hookrightarrow Y;
\end{align*}
    and $F$ a bounded subset in $L^p(0,T;X)\cap W^{s,r}(0,T;Y)$. Then $F$ is relatively compact in $L^p(0,T;B)$ (in $C([0,T];B)$ if $p=+\infty$).
\end{theorem}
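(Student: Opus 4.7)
The proof follows the standard Aubin--Lions--Simon template: combine pointwise (in time) compactness from the $L^p(0,T;X)$ bound with time-equicontinuity from the $W^{s,r}(0,T;Y)$ bound, gluing them via Ehrling's inequality. The preliminary observation is Ehrling's lemma: since $X\hookrightarrow B$ is compact and $B\hookrightarrow Y$ is continuous, for every $\eta>0$ there exists $C_\eta>0$ such that $\|v\|_B\leq \eta\|v\|_X+C_\eta\|v\|_Y$ for all $v\in X$.

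\textbf{Case $p<\infty$.} The plan is to use the Kolmogorov--Riesz--Fréchet characterisation of relative compactness in $L^p(0,T;B)$: it suffices to show that $F$ is bounded in $L^p(0,T;B)$ (immediate from $X\hookrightarrow B$) and that the uniform $L^p$-modulus of continuity in time vanishes, namely $\sup_{u\in F}\|\tau_h u-u\|_{L^p(0,T-h;B)}\to 0$ as $h\to 0^+$. Integrating Ehrling pointwise in time yields
\[
\|\tau_h u-u\|_{L^p(0,T-h;B)}\leq 2\eta\,\|u\|_{L^p(0,T;X)}+C_\eta\,\|\tau_h u-u\|_{L^p(0,T-h;Y)}.
\]
The first term is uniformly controlled by choosing $\eta$ small. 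For the second, the hypotheses on $(s,r,p)$ yield an embedding $W^{s,r}(0,T;Y)\hookrightarrow W^{\tilde s,p}(0,T;Y)$ with $\tilde s>0$: when $r\geq p$ this is H\"older with $\tilde s=s$, while when $r<p$ one uses the one-dimensional fractional Sobolev embedding combined with interpolation to gain integrability at the cost of $\tilde s=s-(1/r-1/p)>0$. The standard $L^p$-translation estimate $\|\tau_h u-u\|_{L^p(0,T-h;Y)}\lesssim h^{\tilde s}\|u\|_{W^{s,r}(0,T;Y)}$ then delivers uniform vanishing as $h\to 0$.

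\textbf{Case $p=\infty$.} Here relative compactness in $C([0,T];B)$ is obtained via Ascoli--Arzelà. The hypothesis $s>1/r$ (which is precisely $s>1/r-1/p$ when $p=\infty$) gives the Sobolev--Morrey embedding $W^{s,r}(0,T;Y)\hookrightarrow C^{s-1/r}([0,T];Y)$, hence uniform equicontinuity of $F$ in $Y$. Ehrling combined with the uniform $L^\infty(0,T;X)$-bound then upgrades this to uniform equicontinuity in $B$. Since for each $t$ the set $\{u(t):u\in F\}$ is bounded in $X$, the compact embedding $X\hookrightarrow B$ gives its relative compactness in $B$, and Ascoli--Arzelà concludes.

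\textbf{Main obstacle.} The delicate point is establishing the $L^p(0,T;Y)$-translation estimate in the regime $r<p$: one cannot directly bound the $L^p$-modulus of continuity by the $W^{s,r}$-seminorm, and the threshold $s>1/r-1/p$ is exactly that of the scalar fractional Sobolev embedding $W^{s,r}(0,T)\hookrightarrow L^p(0,T)$. The proof must quantify an interpolation between this embedding and the fractional regularity in $Y$ to recover a strictly positive modulus exponent, which is what makes the statement sharp.
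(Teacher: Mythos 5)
The paper does not actually prove this statement: it is quoted as a known result from Simon (\cite[Corollary 5]{simon1986compact}) and used as a black box, so there is no internal proof to compare your attempt against. That said, your sketch follows what is essentially Simon's own route: Ehrling's inequality to interpolate the space variable, translation estimates in $Y$ extracted from the $W^{s,r}$-bound (via the Besov-type inequality $\|\tau_h u-u\|_{L^p(0,T-h;Y)}\lesssim h^{\tilde s}\|u\|_{W^{s,r}(0,T;Y)}$ and the one-dimensional fractional embedding in the regime $r<p$), and Morrey plus Ascoli--Arzel\`a when $p=\infty$. These ingredients are correct, and identifying the threshold $s>1/r-1/p$ with the scalar embedding $W^{s,r}(0,T)\hookrightarrow L^p(0,T)$ is the right way to see why the hypothesis is what it is.

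There is, however, one genuine gap in the case $p<\infty$: the compactness criterion you invoke is misstated. In $L^p(0,T;B)$ with $B$ infinite-dimensional, boundedness together with uniform vanishing of the translation modulus $\sup_{u\in F}\|\tau_h u-u\|_{L^p(0,T-h;B)}\to 0$ does \emph{not} imply relative compactness: the family of constant functions $t\mapsto e_n$, with $(e_n)$ an orthonormal sequence in a Hilbert space $B$, satisfies both conditions and admits no convergent subsequence. The vector-valued Kolmogorov--Riesz theorem (equivalently, Simon's Theorem 1) requires in addition a pointwise compactness condition, namely that the set of time averages $\bigl\{\int_{t_1}^{t_2}u(t)\,dt:\ u\in F\bigr\}$ be relatively compact in $B$ for every $0<t_1<t_2<T$. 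In your setting this extra condition comes for free -- by H\"older the averages lie in a fixed ball of $X$, which is relatively compact in $B$ by the compact embedding -- but it must be stated and verified; as written, the claim that boundedness in $L^p(0,T;B)$ plus a vanishing $L^p$-modulus of continuity suffices is false. The remainder of the argument, including the $p=\infty$ case, goes through once this is repaired (with the minor additional remark that the a.e.\ bound $\|u(t)\|_X\leq M$ extends to every $t$ by continuity in $Y$ and closedness of $\overline{B_X(0,M)}$ in $B$).
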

Combining \autoref{lemma_abstract_convergence_1} and \autoref{lemma_compactness_1} we obtain the following.
\begin{lemma}\label{lemma_compactness}
Let $s\in (0,\frac{1}{2}),\ r>2$ such that $sr>1$ and $F$ a bounded subset in $C(0,T;V)\cap W^{s,r}(0,T;\mathbf{H}^{-4})$. Then $F$ is relatively compact in $Z_T$.     
\end{lemma}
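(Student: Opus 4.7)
The plan is to extract a convergent subsequence in each of the two component topologies of $Z_T$ and then combine them. Fix a sequence $\{u_k\}\subset F$; since $F$ is bounded in $C([0,T];V)$ there exists $R>0$ such that $\sup_k\sup_{t\in[0,T]}\|u_k(t)\|_V\leq R$.

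The first step is to invoke the Simon-type compactness result (\autoref{lemma_compactness_1}) with the triple $V\stackrel{c}{\hookrightarrow}H\hookrightarrow \mathbf{H}^{-4}$, where the first embedding is compact by the Rellich--Kondrachov theorem on the torus and the second is continuous. We take $p=\infty$, $X=V$, $B=H$, $Y=\mathbf{H}^{-4}$, and the Sobolev--Slobodeckij parameters $s,r$ as in the statement. Since $p=\infty$ we are in the regime $r\leq p$, so the required condition is $s>1/r-1/p=1/r$, which is exactly the hypothesis $sr>1$. The assumed boundedness of $F$ in $C([0,T];V)\cap W^{s,r}(0,T;\mathbf{H}^{-4})$ therefore yields relative compactness of $F$ in $C([0,T];H)$. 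Extracting a subsequence (not relabeled) we get $u_k\to u$ in $C([0,T];H)$ for some limit $u$.

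The second step upgrades this to convergence in $C([0,T];V_w)$. The uniform bound $\sup_k\sup_{t}\|u_k(t)\|_V\leq R$ together with convergence in $C([0,T];H)$ puts us exactly in the setting of \autoref{lemma_abstract_convergence_1}, which yields $u_k\to u$ in $C([0,T];\mathbb{B}_R)$, and in particular, since the weak topology of $V$ restricted to $\mathbb{B}_R$ coincides with the metric $q$, we obtain $u_k\to u$ in $C([0,T];V_w)$. Combining the two convergences, $u_k\to u$ in $Z_T=C([0,T];H)\cap C([0,T];V_w)$ with its supremum-of-topologies structure, which proves relative compactness.

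The argument is essentially an assembly of the two quoted theorems, so the only delicate point is matching the parameters in \autoref{lemma_compactness_1}: one must be careful that the case $p=\infty$ is covered (giving compactness in $C([0,T];H)$ rather than an $L^p$-in-time space) and that the threshold $s>1/r$ is precisely the stated assumption $sr>1$. Once that bookkeeping is done, no further estimates are required.
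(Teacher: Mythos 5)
Your proof is correct and follows essentially the same route as the paper: Simon's compactness criterion (\autoref{lemma_compactness_1}) with $p=+\infty$, $X=V$, $B=H$, $Y=\mathbf{H}^{-4}$ to extract a subsequence converging in $C([0,T];H)$, followed by \autoref{lemma_abstract_convergence_1} to upgrade the convergence to $C([0,T];\mathbb{B}_R)$ using the uniform bound in $C([0,T];V)$. Your explicit verification that $sr>1$ is precisely the threshold $s>1/r-1/p$ in the case $r\leq p=\infty$ is a detail the paper leaves implicit.
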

\begin{proof}
Since $F$ is bounded in $C([0,T];V),$ there exists $R<+\infty$ such that
\begin{align*}
    \sup_{f\in F}\norm{f}_{C([0,T];V)}\leq R.
\end{align*}
Therefore we can consider on $Z_T$ the metric of $C([0,T];\mathbb{B}_R)\cap C([0,T];H)$. In particular compactness is equivalent to sequentially compactness. Let $(f_k)\in F$. Thanks to \autoref{lemma_compactness_1} with $p=+\infty,\ X=V,\ B=H,\ Y=\mathbf{H}^{-4}$ there exists a subsequence $k_j$ and $f\in C([0,T];H)$ such that \begin{align*}
    f_{k_j}\rightarrow f\quad \text{in }C([0,T];H).
\end{align*}
Then \autoref{lemma_abstract_convergence_1} implies that also 
\begin{align*}
    f_{k_j}\rightarrow f\quad \text{in }C([0,T];\mathbb{B}_R).
\end{align*}
This completes the proof.
\end{proof}
Having  \autoref{prop_compactess_space_viscous}, \autoref{prop_compactness_time_viscius}, \autoref{lemma_compactness} in mind we get immediately the following:
\begin{corollary}\label{compactness_viscous}
For each $n\in \mathbb{N}$ the family of laws $\{\L(B^{n,\eta})\}_{\eta > 0}$ is tight in $Z_T$.    
\end{corollary}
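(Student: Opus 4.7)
The plan is a standard Prokhorov-type tightness argument. I will exhibit, for every $\epsilon>0$, a compact subset $K_\epsilon\subset Z_T$ such that $\PP(B^{n,\eta}\in K_\epsilon)\geq 1-\epsilon$ uniformly in $\eta>0$, using \autoref{lemma_compactness} to manufacture compact sets out of a priori bounds, and \autoref{prop_compactess_space_viscous}--\autoref{prop_compactness_time_viscius} to control the probabilities.

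First, I would fix exponents compatible with \autoref{prop_compactness_time_viscius} and \autoref{lemma_compactness}: pick $s_1\in(0,1/2)$ and $r_1>2$ such that $s_1 r_1>1$, and take $\beta=4$. By \autoref{lemma_compactness}, for any $R_1,R_2>0$ the set
\[
K_{R_1,R_2}:=\Big\{f\in C([0,T];V)\cap W^{s_1,r_1}(0,T;\mathbf{H}^{-4}):\ \norm{f}_{C([0,T];V)}\leq R_1,\ \norm{f}_{W^{s_1,r_1}(0,T;\mathbf{H}^{-4})}\leq R_2\Big\}
\]
is relatively compact in $Z_T$; its closure $\overline{K_{R_1,R_2}}$ in $Z_T$ is therefore compact.

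Next I would use Markov's inequality together with the two a priori estimates. For the $C([0,T];V)$ part, \autoref{prop_compactess_space_viscous} gives, for $\gamma\geq 4$,
\[
\PP\bigl(\norm{B^{n,\eta}}_{C([0,T];V)}>R_1\bigr)\leq \frac{\EE\bigl[\sup_{t\in[0,T]}\norm{B^{n,\eta}_t}_V^\gamma\bigr]}{R_1^\gamma}\leq \frac{C(n,\overline{B}_0,T,\gamma)}{R_1^\gamma},
\]
which is $\leq \epsilon/2$ for $R_1=R_1(\epsilon)$ large. For the fractional Sobolev seminorm in time, the Gagliardo characterization
\[
\norm{B^{n,\eta}}_{W^{s_1,r_1}(0,T;\mathbf{H}^{-4})}^{r_1}\lesssim \norm{B^{n,\eta}}_{L^{r_1}(0,T;\mathbf{H}^{-4})}^{r_1}+\int_0^T\!\int_0^T \frac{\norm{B^{n,\eta}_t-B^{n,\eta}_s}_{\mathbf{H}^{-4}}^{r_1}}{\lvert t-s\rvert^{1+r_1 s_1}}\,dt\,ds
\]
combined with \autoref{prop_compactness_time_viscius} (for the double integral) and the trivial estimate $\norm{B^{n,\eta}}_{\mathbf{H}^{-4}}\lesssim \norm{B^{n,\eta}}_V$ together with \autoref{prop_compactess_space_viscous} (for the $L^{r_1}$ term) yields
\[
\EE\bigl[\norm{B^{n,\eta}}_{W^{s_1,r_1}(0,T;\mathbf{H}^{-4})}^{r_1}\bigr]\leq C(n,\overline{B}_0,T,s_1,r_1),
\]
so by Markov
\[
\PP\bigl(\norm{B^{n,\eta}}_{W^{s_1,r_1}(0,T;\mathbf{H}^{-4})}>R_2\bigr)\leq \frac{C(n,\overline{B}_0,T,s_1,r_1)}{R_2^{r_1}}\leq \frac{\epsilon}{2}
\]
for $R_2=R_2(\epsilon)$ large enough.

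Finally, I would set $K_\epsilon:=\overline{K_{R_1(\epsilon),R_2(\epsilon)}}$. Combining the two estimates via a union bound, $\PP(B^{n,\eta}\notin K_\epsilon)\leq \epsilon$ uniformly in $\eta$, proving tightness of $\{\L(B^{n,\eta})\}_{\eta>0}$ in $Z_T$. Because the difficult analytic work (the It\^o formula yielding viscosity-independent bounds in $V$, and the martingale plus deterministic estimates giving time regularity in $\mathbf{H}^{-4}$) has already been carried out in \autoref{prop_compactess_space_viscous} and \autoref{prop_compactness_time_viscius}, there is no genuine obstacle here; the only subtle point is to ensure the exponents $(s_1,r_1,\beta)$ are chosen simultaneously compatible with both \autoref{lemma_compactness} and \autoref{prop_compactness_time_viscius}, which is possible since the latter allows $\beta=4$ and arbitrary $r_1>2$, $s_1\in(0,1/2)$ with $s_1 r_1>1$.
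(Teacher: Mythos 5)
Your proposal is correct and follows exactly the route the paper intends: the paper omits an explicit proof, stating that the corollary follows ``immediately'' from \autoref{prop_compactess_space_viscous}, \autoref{prop_compactness_time_viscius} and \autoref{lemma_compactness}, and your Markov-plus-compact-set argument with the Gagliardo seminorm is precisely the standard way to fill that in. No issues.
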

By standard arguments, see for example \cite[Chapter 2]{flandoli2023stochastic} or \cite[Section 5.3]{brzezniak2021stochastic}, by Jakubowski version of Skorokhod's representation's theorem, \cite[Theorem 2]{yakubovskii1997almost}, for each $n\in \N$ we can find, up to passing to subsequences, an auxiliary probability space, that for simplicity we continue to call $(\Omega,\mathcal{F},\mathbb{P})$, and processes $(\Tilde{B}^{n,\eta_h},W^{h}:=\{W^{k,j,h}\}_{k\in \mathbb{Z}^3_0,\ j\in \{1,2\}}),\  (B^n,W:=\{W^{k,j}\}_{k\in \mathbb{Z}^3_0,\ j\in\{1,2\}}),  $ such that \begin{align*}
    &\Tilde{B}^{n,\eta_h}\rightarrow B^n\quad \text{in } C([0,T];V_w)\cap C([0,T];H) \quad \mathbb{P}-a.s.\\
   & W^h\rightarrow W \quad \text{in } C([0,T];\mathbb{C}^{\mathbb{Z}^3_0\times \mathbb{Z}^3_0})\quad \mathbb{P}-a.s.
\end{align*}
Of course the convergence above between $W^n$ and $W$ can be seen as the uniform convergence of cylindrical Wiener processes \begin{align*}
    W^h=\sumkj a_{k,j}e^{ik\cdot x} W^{k,j,h},\ W=\sumkj a_{k,j}e^{ik\cdot x} W^{k,j}
\end{align*} on a suitable Hilbert space $U_0$. Before going on, in order to identify $B^n$ as a weak solution of equation \eqref{ito_PDE_magnetic} we need further integrability properties of $B^n$.
\begin{proposition}\label{further_estimates_limit_solution}
For each $\gamma\geq 4$
\begin{align*}
    \expt{\sup_{t\in [0,T]}\norm{B^{n}_t}_V^\gamma}\lesssim_{n,\overline{B}_0,T,\gamma}1.
\end{align*}
\end{proposition}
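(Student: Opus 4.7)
The plan is to transfer the uniform-in-$\eta$ bound of \autoref{prop_compactess_space_viscous} to the limit object $B^n$ by combining the weak continuity of the norm with Fatou's lemma. Recall that after the Skorokhod step we have, on the new probability space, $\tilde B^{n,\eta_h}\to B^n$ in $C([0,T];V_w)\cap C([0,T];H)$ $\mathbb{P}$-a.s.\ (for $n$ fixed), and the laws of $\tilde B^{n,\eta_h}$ and of $B^{n,\eta_h}$ coincide, so the a priori estimate of \autoref{prop_compactess_space_viscous} reads
\begin{align*}
\sup_{h}\expt{\sup_{t\in[0,T]}\|\tilde B^{n,\eta_h}_t\|_V^{\gamma}}\leq C(n,\overline{B}_0,T,\gamma).
\end{align*}

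The first step is pointwise-in-$t$ weak lower semicontinuity. From the convergence in $C([0,T];V_w)$, for every $t\in[0,T]$ the sequence $\tilde B^{n,\eta_h}_t$ converges to $B^n_t$ weakly in $V$, so $\|B^n_t\|_V\leq \liminf_{h\to\infty}\|\tilde B^{n,\eta_h}_t\|_V$ $\mathbb{P}$-a.s. Bounding the right-hand side by $\sup_{s\in[0,T]}\|\tilde B^{n,\eta_h}_s\|_V$ and then taking the supremum over $t\in[0,T]$ on the left, I obtain
\begin{align*}
\sup_{t\in[0,T]}\|B^n_t\|_V\leq \liminf_{h\to\infty}\sup_{t\in[0,T]}\|\tilde B^{n,\eta_h}_t\|_V\qquad \mathbb{P}\text{-a.s.}
\end{align*}
Raising to the power $\gamma$ preserves the inequality, and then Fatou's lemma gives
\begin{align*}
\expt{\sup_{t\in[0,T]}\|B^n_t\|_V^{\gamma}}\leq \liminf_{h\to\infty}\expt{\sup_{t\in[0,T]}\|\tilde B^{n,\eta_h}_t\|_V^{\gamma}}\leq C(n,\overline{B}_0,T,\gamma),
\end{align*}
which is exactly the claim.

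There is no substantial obstacle: the only subtle point is the interchange of $\sup_{t}$ and $\liminf_{h}$, which goes through in the easy direction as written above, and the use of equality in law of $\tilde B^{n,\eta_h}$ and $B^{n,\eta_h}$ (which is built into the Jakubowski--Skorokhod construction invoked before the statement) to legitimately plug in the viscous a priori bound. Once this is in place, $B^n$ inherits the $L^{\gamma}_{\Omega}L^{\infty}_t V$ estimate used throughout the rest of \autoref{appendix PDE magnetic Field} to identify $B^n$ as the weak solution of \eqref{ito_PDE_magnetic}.
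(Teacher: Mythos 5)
Your argument is correct, and it reaches the conclusion by a mechanism different from the one in the paper. The paper also starts from the equality in law and the uniform viscous bound of \autoref{prop_compactess_space_viscous}, but then extracts a further subsequence converging weakly-$*$ in $L^{\gamma}(\Omega;L^{\infty}(0,T;V))$ to some $\overline{B}^n$, identifies $\overline{B}^n=B^n$ through the $\mathbb{P}$-a.s.\ convergence in $C([0,T];H)$, and concludes by weak-$*$ lower semicontinuity of the norm. Your route instead exploits the $\mathbb{P}$-a.s.\ convergence in $C([0,T];V_w)$ pathwise: pointwise-in-$t$ weak lower semicontinuity of $\norm{\cdot}_V$, the elementary interchange of $\sup_t$ and $\liminf_h$ in the easy direction, and Fatou. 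This is arguably more elementary, since it avoids invoking duality for the mixed space $L^{\gamma}(\Omega;L^{\infty}(0,T;V))$ and the (glossed-over in the paper) identification of the pointwise supremum with the essential supremum norm; on the other hand, the paper's argument would survive even if only the $C([0,T];H)$ convergence were available, whereas yours genuinely uses the $C([0,T];V_w)$ convergence provided by the compactness step. The only point worth making explicit in your write-up is that $\sup_{t\in[0,T]}\norm{B^n_t}_V$ is a well-defined random variable: since $B^n\in C([0,T];V_w)$ a.s., the map $t\mapsto\norm{B^n_t}_V$ is lower semicontinuous, so the supremum coincides with the supremum over a countable dense set and is measurable.
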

\begin{proof}
     Since $\tilde{B}^{n,\eta_h}$ has the same law of $B^{n,\eta_h}$, by \autoref{prop_compactess_space_viscous} for each $\gamma\geq 4$
\begin{align*}
    \sup_{h\in \N}\expt{\sup_{t\in [0,T]}\norm{\nabla \tilde{B}^{n,\eta_h}}^\gamma}\lesssim_{n,\overline{B}_0,T,\gamma}1.
\end{align*}
In particular up to passing to a further subsequence $B^{n,\eta,h_l}$ converges weakly star in $L^{\gamma}(\Omega;L^{\infty}(0,T;V))$ to some $\overline{B}^n$. The latter implies also that the weak star convergence holds in $L^{\gamma}(\Omega;L^{\infty}(0,T;H))$. Since $B^{n,\eta,h_l}\rightarrow B^n\ \mathbb{P}-a.s.$ in $C([0,T];H)\hookrightarrow L^{\infty}(0,T;H)$ we have that $B^n=\overline{B}^n$ and in particular the claim.
\end{proof}
Now we are in the position to prove \autoref{well_posedness_estimates_MF}.
\begin{proof}[Proof of \autoref{well_posedness_estimates_MF}]
We divide the proof in some steps.\\
\emph{Step 1: A priori estimates.}
We start showing that each solution of \eqref{ito_PDE_magnetic} in the sense of  \autoref{weak_solution MF} satisfies \eqref{ito2magnetic}, \eqref{itogammamagnetic}, \eqref{apriori_estimateB_2}, \eqref{apriori_estimate_gamma}.
 By \autoref{weak_solution MF}
    \begin{align*}
        \int_0^T \norm{\Delta B^{n}_t}_{\mathbf{H}^{-1}}^2 dt<+\infty \quad \mathbb{P}-a.s.
    \end{align*}
    and 
    \begin{align*}
        \sumkj \int_0^t \skj\cdot\nabla B^{n}_s-B^{n}_s\cdot\nabla\skj dW^{k,j}_s
    \end{align*}
    is a $H$ valued local martingale. Therefore we can apply It\^o formula in $H$, see \cite[Theorem 2.13]{linear_theory_rozovsky_lotosky}, obtaining \eqref{ito2magnetic}.
Therefore, applying finite dimensional It\^o formula to relation \eqref{ito2magnetic} with $f(x)=\lvert x\rvert^{\gamma/2}$ for $\gamma\geq 4$  we obtain \eqref{itogammamagnetic}. For each $M\geq 1,\ n\in\mathbb{N}$ let $\tau_M=\inf\{t\in [0,T]:\ \norm{\nabla B^{n}_t}^2\geq M\}\wedge T$.
Integrating \eqref{ito2magnetic} between $0$ and $r\leq \tau^M$ and taking the expected value of the supremum in time for $r\leq t\wedge \tau_M$ we obtain, by Burkholder-Davis-Gundy inequality and H\"older inequality
\begin{align*}
    \expt{\sup_{r\leq t}\norm{ B^{n}_{r \wedge \tau_M}}^2}&=\expt{\sup_{r\leq t\wedge \tau_M}\norm{ B^{n}_r}^2}\\ & \leq \norm{ \overline{B}_0}^2+\sumkj \lvert k\tkj\rvert^2\expt{\int_0^{t\wedge \tau_M}\lVert B^n_s\rVert^2 ds}\\ &+\expt{\left(\int_0^{t\wedge \tau_M}\sumkj \lvert k\tkj\rvert^2  \lVert B^n_s\rVert^4 ds\right)^{1/2}}\\ &\leq \norm{ \overline{B}_0}^2+C\expt{\int_0^{t\wedge\tau_M}\norm{ B^{n}_s}^2 ds}+C\expt{\left(\int_0^{t\wedge\tau_M}\norm{B^{n}_s}^4 ds\right)^{1/2}}\\ & \leq \norm{\overline{B}_0}^2+C\expt{\int_0^{t\wedge\tau_M}\norm{ B^{n}_s}^2 ds}\\ &\quad\quad\quad\quad\qquad+C\expt{\sup_{r\leq t\wedge\tau_M}\norm{ B^{n}_r}\left(\int_0^{t\wedge\tau_M}\norm{ B^{n}_s}^2 ds\right)^{1/2}}\\ & \leq \norm{ \overline{B}_0}^2+\frac{1}{2}\expt{\sup_{r\leq t\wedge\tau_M}\norm{B^{n}_r}^2}+C\expt{\int_0^{t\wedge\tau_M}\norm{ B^{n}_s}^2 ds},
\end{align*}
where in the second step we exploited the fact that
\begin{align*}
    \sumkj \lvert k\tkj\rvert^2=\sum_{\substack{k\in \Z^3_0\\
    n\leq \lvert k\rvert\leq 2n\\
    j\in \{1,2\}}}\frac{1}{\lvert k\rvert^3}\lesssim 1
\end{align*} and in the last one we applied Young's inequality. In conclusion, by simple manipulations, we proved that
\begin{align}\label{gronwall_preineq_uniqueness}
    \expt{\sup_{r\leq t}\norm{ B^{n}_{r \wedge \tau_M}}^2}\leq 2\expt{\norm{\overline{B}_0}^2}+C\expt{\int_0^t \operatorname{sup}_{r\leq s} \norm{ B^{n}_{r \wedge \tau_M}}^2 ds}.
\end{align}
By Gr\"onwall's lemma, the latter implies
\begin{align*}
    \expt{\sup_{r\leq T\wedge \tau_M}\norm{ B^{n}_r}^2}=\expt{\sup_{r\leq T}\norm{ B^{n}_{r \wedge \tau_M}}^2}\lesssim_{\overline{B}_0,T}1.
\end{align*}
Since $\tau_M\rightarrow T$ as $M\rightarrow +\infty$ due to the fact that $B^{n}\in C([0,T];H)$, letting $M\rightarrow +\infty$ the latter implies equation \eqref{apriori_estimateB_2} by monotone convergence. The proof of equation \eqref{apriori_estimate_gamma} is analogous, only starting by \eqref{itogammamagnetic} in place of \eqref{ito2magnetic} and we omit the easy details.\\
\emph{Step 2: Uniqueness.} By linearity of the equation it is enough to prove uniqueness in case of $\overline{B}_0=0$. In particular, due \eqref{gronwall_preineq_uniqueness}, our solution starting from $\overline{B}_0=0$ satisfies
\begin{align*}
    \expt{\sup_{r\leq t}\norm{ B^{n}_{r \wedge \tau_M}}^2}\leq C\expt{\int_0^t \operatorname{sup}_{r\leq s} \norm{ B^{n}_{r \wedge \tau_M}}^2 ds}.
\end{align*}
By Gr\"onwall's lemma, the latter implies
\begin{align*}
    \expt{\sup_{r\leq T\wedge \tau_M}\norm{ B^{n}_r}^2}=\expt{\sup_{r\leq T}\norm{ B^{n}_{r \wedge \tau_M}}^2}=0.
\end{align*}
Since $\tau_M\rightarrow T$ as $M\rightarrow +\infty$ due to the fact that $B^{n}\in C([0,T];H)$, letting $M\rightarrow +\infty$ the $B^n\equiv 0$ and the uniqueness follows.\\
\emph{Step 3: Limit equation.} 
Let $\phi \in \mathbf{H}^2$ we know that
\begin{align*}
    \langle \tilde{B}^{n,\eta_h}_t,\phi\rangle-\langle \overline{B}_0,\phi\rangle&=\left(\frac{2}{3}\eta_n+\eta_{h}\right)\int_0^t \langle \tilde{B}^{n,\eta_h}_s,\Delta\phi\rangle ds\\ & +\sumkj \int_0^t\langle \skj\times B^{n,\eta_h}_s,\nabla\times \phi\rangle dW^{k,j,h}_s\quad \mathbb{P}-a.s.
\end{align*}
Therefore we will show, up to passing to a further subsequence, $\mathbb{P}$-a.s. convergence of all the terms appearing above, uniformly in time.
Thanks to the fact that $\tilde{B}^{n,\eta_h}\rightarrow B^n\ \mathbb{P}-a.s.$ in $C([0,T];H)$ we get easily
\begin{align*}
    &\operatorname{sup}_{t\in [0,T]}\lvert\langle \tilde{B}^{n,\eta_h}_t-B^n_t,\phi\rangle\rvert\rightarrow 0\quad \mathbb{P}-a.s.,\\
    &\sup_{t\in [0,T]}\left\lvert\left(\frac{2}{3}\eta_n+\eta_{h}\right)\int_0^t \langle \tilde{B}^{n,\eta_h}_s,\Delta\phi\rangle ds- \frac{2}{3}\eta_n \int_0^t \langle \tilde{B}^{n}_s,\Delta\phi\rangle ds\right\rvert\rightarrow 0\quad \mathbb{P}-a.s.
\end{align*}
We are left to show the convergence of the stochastic integrals. Since 
\begin{align*}
   \sumkj \int_0^T\langle \skj\times \left(B^{n,\eta_h}_s-B^n_s\right),\nabla\times \phi\rangle^2 ds& \leq \sum_{\substack{k\in \Z^3_0\\
   n\leq \lvert k\rvert \leq 2n\\ j\in\{1,2\}}}\frac{1}{\lvert k\rvert^5}\norm{\phi}_V^2 T\norm{B^{n,\eta_h}-B^n}_{C([0,T];H)}^2\\ & \rightarrow 0, 
\end{align*}
we can apply \cite[Lemma 4.3]{MarioMarco} obtaining that 
\begin{align*}
    \sup_{t\in [0,T]}\left\lvert  \sumkj \int_0^t\langle \skj\times B^{n,\eta_h}_s,\nabla\times \phi\rangle dW^{k,j,h}_s - \int_0^t\langle \skj\times B^{n}_s,\nabla\times \phi\rangle dW^{k,j}_s\right\rvert\rightarrow 0 \quad \text{in probability},
\end{align*}
therefore $\mathbb{P}-$a.s. up to passing to a further sub-subsequence.
In conclusion, so far we showed that for each $\phi\in \mathbf{H}^2$
\begin{align}\label{final_expression_magn_pde}
 \langle B^n_t,\phi\rangle-\langle \overline{B}_0,\phi\rangle&=\frac{2}{3}\eta_n\int_0^t \langle  B^n_s,\Delta \phi\rangle ds+\sumkj \int_0^t \langle \skj\times B^n_s ,\nabla\times\phi\rangle dW^{k,j}_s   \quad \mathbb{P}-a.s.\quad \forall t\in [0,T].
\end{align}
By standard density argument, since $B^n\in C([0,T];V_w)$, we can find a zero measure set $\mathcal{N}\subseteq\Omega$ such that on its complementary relation \eqref{final_expression_magn_pde} holds for each $\phi\in V$. We omit the easy details.\\
\emph{Step 4: Conclusion.} Every subsequence $\L(B^{n,\eta_h})$ admits a sub-subsequence which converges to the unique limit point $\L(B^n)$, where $B^n$ is the unique weak solution of \eqref{ito_PDE_magnetic} in the sense of \autoref{weak_solution MF}. Then, the Gyongy–Krylov Convergence Criterion, see \cite{gyongy1996existence}, applies and the whole sequence $B^{n,\eta_h}$ converges in probability in $C([0,T];H)\cap L^2(0,T;V)$ to $B^n$ in the original probability space. The proof is complete.
\end{proof}
\section{Some Remarks on the Occupation Measure}\label{appendix:rmk_physics}
Let $f:\mathbb{T}^{3}\rightarrow\mathbb{R}^{3}$ be a measurable function.
Given $x_{0}\in\mathbb{T}^{3}$ and $\epsilon>0$, call $\nu_{x_{0},\epsilon}$
the probability measure on Borel sets of $\mathbb{R}^{3}$ defined as%
\[
\int_{\mathbb{R}^{k}}\varphi\left(  y\right)  \nu_{x_{0},\epsilon}\left(
dy\right)  =\frac{1}{\left\vert B\left(  x_{0},\epsilon\right)  \right\vert
}\int_{B\left(  x_{0},\epsilon\right)  }\varphi\left(  f\left(  x\right)
\right)  dx
\]
for every $\varphi\in C_{b}\left(  \mathbb{R}^{3}\right)  $, where $B\left(
x_{0},\epsilon\right)  $ is the ball in $\mathbb{T}^{3}$ of center $x_{0}$ and
radius $\epsilon$ and $\left\vert B\left(  x_{0},\epsilon\right)  \right\vert
$ is its Lebesgue measure. In other words, $\nu_{x_{0},\epsilon}$ is the push
forward of the normalized Lebesgue measure $\frac{1}{\left\vert B\left(
x_{0},\epsilon\right)  \right\vert }\mathcal{L}_{3}|_{B\left(  x_{0}%
,\epsilon\right)  }$ on $B\left(  x_{0},\epsilon\right)  $ under the map $f$;
or the image law of the random variable $f$ considered on the probability
space $\left(  B\left(  x_{0},\epsilon\right)  ,\mathcal{B}\left(  B\left(
x_{0},\epsilon\right)  \right)  ,\frac{1}{\left\vert B\left(  x_{0}%
,\epsilon\right)  \right\vert }\mathcal{L}_3|_{B\left(  x_{0},\epsilon
\right)  }\right)  $. \textit{We call }$\nu_{x_{0},\epsilon}$\textit{ the
local occupation measure around }$x_{0}$\textit{ of size }$\epsilon$\textit{.
}

Clearly, if $f$ is continuous, the weak limit as $\epsilon\rightarrow0$ of
$\nu_{x_{0},\epsilon}$ is $\delta_{f\left(  x_{0}\right)  }$. When $f$ is
rapidly oscillating, however, $\nu_{x_{0},\epsilon}$ \textit{captures the
local oscillations in a statistical sense}, for finite $\epsilon$. Here
"statistical" is understood in a spatial sense (opposite to the more common
temporal or probabilistic sense):\ where (on $\mathbb{R}^{3}$) $\nu
_{x_{0},\epsilon}$ gives more mass, more often we observe such values of $f$
around $x_{0}$. The notion becomes particularly natural from the view point of
interpretation in the stationary case, notion that we express here only in a
loose way: if the phenomenon at hand is independent of position $x_{0}$ and
maybe of a time parameter, $\nu_{x_{0},\epsilon}$ captures the oscillations of
$f$ in a statistical sense, where now ``statistical" may be interpreted in a
wider sense, not only locally spatial.

Rigorously, assume we have a sequence of measurable functions $\left(
f_{n}\right)  $ from $\mathbb{T}^{3}$ to $\mathbb{R}^{3}$ and a measurable
function $x\mapsto\nu_{x}$ from $\mathbb{T}^{3}$ to the space of probability
measures on Borel sets of $\mathbb{R}^{3}$ endowed with the topology of weak
convergence, such that
\[
\lim_{n\rightarrow\infty}\int_{\mathbb{T}^{3}}\varphi\left(  f_{n}\left(
x\right)  \right)  \psi\left(  x\right)  dx=\int_{\mathbb{T}^{3}}\left(
\int_{\mathbb{R}^{3}}\varphi\left(  y\right)  \nu_{x}\left(  dy\right)
\right)  \psi\left(  x\right)  dx
\]
for every $\varphi\in C_{b}\left(  \mathbb{R}^{3}\right)  $ and $\psi\in
C_{b}\left(  \mathbb{T}^{3}\right)  $. \textit{The measure-valued function
}$\left(  \nu_{x}\right)  $\textit{ is called the Young measure associated
with the sequence }$\left(  f_{n}\right)  $. The above convergence extends to
$\psi\in L^{2}\left(  \mathbb{T}^{3}\right)  $ by an easy argument of density,
taking into account that the sequence $\left\Vert \varphi\left(  f_{n}\left(
\cdot\right)  \right)  \right\Vert _{\infty}$ is bounded. Then we can take
$\psi=\frac{1}{\left\vert B\left(  x_{0},\epsilon\right)  \right\vert
}\one_{B\left(  x_{0},\epsilon\right)  }$ and get%
\[
\lim_{n\rightarrow\infty}\frac{1}{\left\vert B\left(  x_{0},\epsilon\right)
\right\vert }\int_{B\left(  x_{0},\epsilon\right)  }\varphi\left(
f_{n}\left(  x\right)  \right)  dx=\frac{1}{\left\vert B\left(  x_{0}%
,\epsilon\right)  \right\vert }\int_{B\left(  x_{0},\epsilon\right)  }\left(
\int_{\mathbb{R}^{3}}\varphi\left(  y\right)  \nu_{x}\left(  dy\right)
\right)  dx.
\]
In other words, if $\nu_{x_{0},\epsilon}^{n}$ is the local occupation measure
of $f_{n}$, we have
\[
\lim_{n\rightarrow\infty}\int_{\mathbb{R}^{3}}\varphi\left(  y\right)
\nu_{x_{0},\epsilon}^{n}\left(  dy\right)  =\frac{1}{\left\vert B\left(
x_{0},\epsilon\right)  \right\vert }\int_{B\left(  x_{0},\epsilon\right)
}\left(  \int_{\mathbb{R}^{3}}\varphi\left(  y\right)  \nu_{x}\left(
dy\right)  \right)  dx.
\]
More expressively, we may write%
\[
\lim_{n\rightarrow\infty}\nu_{x_{0},\epsilon}^{n}=\frac{1}{\left\vert B\left(
x_{0},\epsilon\right)  \right\vert }\int_{B\left(  x_{0},\epsilon\right)  }%
\nu_{x}dx
\]
in the weak sense of measures. This formula provides an interpretation for the
Young measure $\left(  \nu_{x}\right)  $ of the sequence $\left(
f_{n}\right)  $: \textit{its local average captures the oscillations of
}$f_{n}$\textit{ in a statistical sense, in the limit as }$n\rightarrow\infty
$. In the homogeneous cases, when $\nu_{x}$ is independent of $x$, the
interpretation is particularly strong.

\subsection{Large values}

In the case of a scalar transport equation, the values of solutions for
positive time cannot exceed the values of the initial conditions. 

In the case of a vector-advection equation, the stretching term may increase
the length of the vectors. The physical phenomenon of magnetic dynamo is an
example. The theory developed above may help to recognize the existence of
large values of the length of the vectors. Let us see a quantitative statement.

Assume we have%
\begin{align}\label{weak_convergence_measure_vector_fields}
    \lim_{n\rightarrow\infty}\int_{\mathbb{T}^{3}}\varphi\left(  B_t^{n}\left(
x\right)  \right)  \psi\left(  x\right)  dx=\int_{\mathbb{T}^{3}}\left(
\int_{\mathbb{R}^{3}}\varphi\left(  b\right)  \mu_{t,x}\left(  db\right)
\right)  \psi\left(  x\right)  dx
\end{align}
for a sequence of vector fields $\left(  B^{n}_t\right)  $, where $\varphi
,\psi,\mu_{t,x}$ are as above. Denote by $\mathcal{L}_{3}$ the Lebesgue measure
on $\mathbb{T}^{3}$. 

\begin{proposition} \label{PropLargeValues}
Assume that there exists a ball $B\left(  x_{0},r\right)  \subset
\mathbb{T}^{3}$ and real numbers $R,\lambda>0$ such that
\[
\mu_{t,x}\left(  B\left(  0,R\right)  ^{c}\right)  \geq\lambda\qquad\text{for
all }x\in B\left(  x_{0},r\right)  .
\] 
Then, for all $\lambda^{\prime}\in\left(  0,\lambda\right)  $ there exists
$n_{0}\in\mathbb{N}$ such that for all $n\geq n_{0}$
\[
\mathcal{L}_{3}\left\{  x\in B\left(  x_{0},r\right)  :\left\vert B_t^{n}\left(
x\right)  \right\vert \geq R\right\}  \geq\lambda^{\prime}\mathcal{L}%
_{3}\left(  B\left(  x_{0},r\right)  \right)  .
\]

\end{proposition}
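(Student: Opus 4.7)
The plan is to recast the conclusion as a lower-semicontinuity statement for the volumes of super-level sets along the Young-measure convergence \eqref{weak_convergence_measure_vector_fields}. The two ingredients are a density extension of that convergence to an indicator test function in $x$, and the Portmanteau theorem applied to an open subset of $\R^3$.

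First I would upgrade \eqref{weak_convergence_measure_vector_fields} from $\psi\in C_b(\T^3)$ to the choice $\psi=\one_{B(x_0,r)}$. This is exactly the density argument already sketched at the beginning of this appendix: approximate $\one_{B(x_0,r)}$ in $L^2(\T^3)$ by continuous functions and pass to the limit by means of the uniform bound $\norm{\varphi(B^n_t(\cdot))}_{L^\infty}\leq\norm{\varphi}_{C_b}$, valid for every $\varphi\in C_b(\R^3)$. The outcome is
\[
\int_{B(x_0,r)}\varphi(B^n_t(x))\,dx\;\longrightarrow\;\int_{B(x_0,r)}\int_{\R^3}\varphi(b)\,\mu_{t,x}(db)\,dx\qquad\forall\,\varphi\in C_b(\R^3).
\]
After normalising by $\mathcal{L}_3(B(x_0,r))$, this says precisely that the pushforward probability measures
\[
\nu_n(A)=\frac{\mathcal{L}_3\{x\in B(x_0,r):B^n_t(x)\in A\}}{\mathcal{L}_3(B(x_0,r))},\qquad \nu(A)=\frac{1}{\mathcal{L}_3(B(x_0,r))}\int_{B(x_0,r)}\mu_{t,x}(A)\,dx,
\]
on $\R^3$ satisfy $\nu_n\rightharpoonup\nu$ weakly.

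I would then apply the Portmanteau theorem to the open set $U:=\{b\in\R^3:|b|>R\}$, obtaining $\liminf_n\nu_n(U)\geq\nu(U)$. Under the convention that $B(0,R)$ denotes the closed ball one has $U=B(0,R)^c$, so the hypothesis of the proposition immediately delivers $\nu(U)\geq\lambda$; otherwise the same bound still follows from the hypothesis together with a $\delta$-perturbation of $R$, exploiting that in the present setup the limit Young measure is absolutely continuous in $b$ for $t>0$ by the Fokker--Planck structure of \eqref{limit_PDE_vlasov}, so that spheres carry zero $\mu_{t,x}$-mass. Unpacking the definition of $\nu_n$ gives
\[
\liminf_{n\to\infty}\frac{\mathcal{L}_3\{x\in B(x_0,r):|B^n_t(x)|\geq R\}}{\mathcal{L}_3(B(x_0,r))}\;\geq\;\lambda,
\]
and the claim for any $\lambda'\in(0,\lambda)$ is the very definition of $\liminf$.

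Since everything reduces to a one-line Portmanteau statement once \eqref{weak_convergence_measure_vector_fields} has been promoted to the indicator of a ball, no substantial obstacle is expected; the only delicate point is the matching between $\{|b|\geq R\}$ and the open set $\{|b|>R\}$ used in Portmanteau, a minor technicality that is handled as indicated above.
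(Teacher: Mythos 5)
Your proof is correct, and it reorganizes the argument in a way that differs from the paper's. The paper builds, for each $\delta>0$, smooth cutoffs $\varphi_\delta\geq\one_{B(0,R)^c}$ and $\psi_\delta\geq\one_{B(x_0,r)}$ (supported in $B(0,R-2\delta)^c$ and $B(x_0,r+2\delta)$ respectively), feeds them directly into \eqref{weak_convergence_measure_vector_fields}, and then concludes ``by the arbitrariness of $\delta$''; this last step is delicate because $n_0$ depends on $\delta$, and what the construction literally yields is a bound on $\mathcal{L}_3\{x\in B(x_0,r+2\delta):|B^n_t(x)|>R-2\delta\}$. You instead decouple the two variables: you promote $\psi$ to the exact indicator $\one_{B(x_0,r)}$ via the $L^1$/$L^2$-density remark that the appendix itself provides, which turns the statement into genuine weak convergence of the pushforward probability measures $\nu_n\rightharpoonup\nu$ on $\R^3$, and you then invoke the Portmanteau inequality $\liminf_n\nu_n(U)\geq\nu(U)$ for the open set $U=\{|b|>R\}$. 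This buys you two things: no enlargement of the ball in $x$, and an explicit treatment of the open/closed boundary issue at $|b|=R$ that the paper's ``arbitrariness of $\delta$'' glosses over. Two minor caveats: (i) the underlying mechanism is the same (Portmanteau is proved exactly by minorizing $\one_U$ with continuous functions, which is what the paper's $\varphi_\delta$ does), so this is a repackaging rather than a new idea; (ii) your fallback claim that $\mu_{t,x}$ is absolutely continuous in $b$ for $t>0$ is slightly too strong, since $\mu_{t,x}=\delta_0$ wherever $\overline{B}_0(x)=0$ — but the property you actually need, namely $\mu_{t,x}(\{|b|=R\})=0$ for $R>0$, does hold (the law of the $b$-diffusion started away from the origin never reaches $0$ and is absolutely continuous on $\R^3\setminus\{0\}$ by ellipticity on annuli), and in the paper's only application (\autoref{Corollary B6 on support}) the hypothesis holds for every $R>0$, so one may simply run your argument at a slightly larger radius.
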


\begin{proof}
For every $\delta\in\left(  0,\frac{R}{2}\right)  $, let $\varphi_{\delta}$ be
a smooth function on $\mathbb{R}^{3}$, equal to 1 on $B\left(  0,R-\delta
\right)  ^{c}$, zero on $B\left(  0,R-2\delta\right)  $, with values in
$\left[  0,1\right]  $ in $B\left(  0,R-\delta\right)  \backslash B\left(
0,R-2\delta\right)  $; and similarly let $\psi_{\delta}$ be a smooth function
on $\mathbb{T}^{d}$, equal to 1 on $B\left(  x_{0},r+\delta\right)  $, zero on
$B\left(  x_{0},r+2\delta\right)  $, with values in $\left[  0,1\right]  $ in
$B\left(  x_{0},r+2\delta\right)  \backslash B\left(  x_{0},r+\delta\right)
$. Then%
\[
\int_{\mathbb{R}^{3}}\varphi_{\delta}\left(  b\right)  \mu_{t,x}\left(
db\right)  \geq\lambda\qquad\text{for all }x\in B\left(  x_{0},r\right)
\]
and thus
\[
\int_{\mathbb{T}^{3}}\left(  \int_{\mathbb{R}^{3}}\varphi_{\delta}\left(
b\right)  \nu_{t,x}\left(  db\right)  \right)  \psi_{\delta}\left(  x\right)
dx\geq\lambda\mathcal{L}_{3}\left(  B_t\left(  x_{0},r\right)  \right)  .
\]
Therefore, given $\lambda^{\prime}\in\left(  0,\lambda\right)  $, there exists
$n_{0}\in\mathbb{N}$ such that for all $n\geq n_{0}$%
\[
\int_{\mathbb{T}^{3}}\varphi_{\delta}\left(  B_t^{n}\left(  x\right)  \right)
\psi_{\delta}\left(  x\right)  dx\geq\lambda^{\prime}\mathcal{L}_{3}\left(
B\left(  x_{0},r\right)  \right)  .
\]
We may easily deduce, by the arbitrariness of $\delta$, that
\[
\int_{B\left(  x_{0},r\right)  }\one_{\left\{  \left\vert B_t^{n}\left(  x\right)
\right\vert \geq R\right\}  }dx\geq\lambda^{\prime}\mathcal{L}_{3}\left(
B\left(  x_{0},r\right)  \right)  .
\]
This is the claimed formula. 
\end{proof}
Due to \autoref{main_thm_magnetic_field}, relation \eqref{weak_convergence_measure_vector_fields} is satisfied $\mathbb{P}-a.s.$ up to passing to subsequences. To be able to use \autoref{PropLargeValues} to say something about large values attained by the stochastic PDE for $B^n_t$, we prove here also that for every $x \in \T^3$ such that $\overline{B}_0(x)\neq 0$, the limit measure $\mu(t,x,db)$ has full support on $\R^3$ for every $t>0$. This fact does not follow trivially by a standard application of the Support Theorem because the diffusion matrix $\mathcal A(b)$ introduced in the proof of \autoref{prop:uniqueness_vlasov} is not differentiable at $b=0$. To remedy this fact we `avoid' the singularity by means of a localization procedure. We state the two main results we are going to use for the ease of the reader and refer to \cite[Theorem 3.5]{Millet1994}, \cite[Chapter 9.5]{baldi2017stochastic} for details. 
\begin{proposition}\label{support thm}
    Let $\sigma:\R^n\rightarrow\R^{n\times k}$ be of class $C^2$ with bounded first and second derivative. For $0\le t \le T$, let $X_t$ be the solution of 
    $$dX_t = \sigma(X_t)dW_t, \qquad X_0=x\in \R^n,$$ then for every $\alpha \in [0, 1/2)$, the support of the measure $P\circ X^{-1}$ in $C^\alpha(0, T; \R^n)$ is the closure of the set $\left\{S(h), h\in \H \right\}$ where $\H$ is the Cameron-Martin space of $W$ and $S(h)$ is given by the solution of 
   \begin{equation}\label{control eq}
       S(h)_t = x + \int_0^t \sigma(S(h)_t)\cdot \dot h_tdt - \frac{1}{2}\int_0^t \nabla(\sigma(S(h)_t))\sigma(S(h)_t)dt
   \end{equation}
\end{proposition}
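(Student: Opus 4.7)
The plan is to establish the two inclusions separately, following the classical Stroock--Varadhan strategy. Set $\mathcal{S} := \overline{\{S(h) : h \in \H\}}^{C^\alpha}$.

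For the inclusion $\operatorname{supp}(P \circ X^{-1}) \subseteq \mathcal{S}$, I would approximate the driving Brownian path by piecewise linear interpolations. Let $W^{(n)}$ denote the polygonal interpolation of $W$ on the uniform partition of mesh $T/n$; its derivative lies in $L^2(0,T)$, so $W^{(n)} \in \H$ almost surely. Set $X^{(n)} := S(W^{(n)})$, i.e.\ the solution of the ODE
\[
\dot X^{(n)}_t = \sigma(X^{(n)}_t)\,\dot W^{(n)}_t - \tfrac{1}{2}\,\nabla\sigma(X^{(n)}_t)\,\sigma(X^{(n)}_t),\qquad X^{(n)}_0 = x.
\]
Wong--Zakai--Stroock--Varadhan type convergence then yields $X^{(n)} \to X$ in probability in $C^\alpha([0,T];\R^n)$ for every $\alpha<1/2$; the $-\tfrac{1}{2}\nabla\sigma\,\sigma$ correction baked into $S$ is precisely the Stratonovich--It\^o conversion term needed for this limit to be $X$ rather than some other diffusion. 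Since $X^{(n)} \in \mathcal{S}$ almost surely, the limit $X$ lies in $\mathcal{S}$ almost surely, giving the inclusion.

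For the reverse inclusion $\mathcal{S} \subseteq \operatorname{supp}(P \circ X^{-1})$, fix $h \in \H$ and $\varepsilon>0$; it is enough to show $P(\|X - S(h)\|_{C^\alpha} < \varepsilon)>0$. Apply Girsanov's theorem with density
\[
\frac{dQ}{dP} = \exp\left(\int_0^T \dot h_t \cdot dW_t - \tfrac{1}{2}\int_0^T |\dot h_t|^2\, dt\right),
\]
so that $\tilde W_t := W_t - h_t$ is a $Q$-Brownian motion. Under $Q$ the process $X$ satisfies $dX_t = \sigma(X_t)\,\dot h_t\,dt + \sigma(X_t)\,d\tilde W_t$; rewriting the It\^o part in Stratonovich form produces the drift $\sigma(X)\,\dot h - \tfrac{1}{2}\nabla\sigma\,\sigma$, which coincides with the drift in the control equation \eqref{control eq} for $S(h)$. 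Consequently $X$ and $S(h)$ solve the same ODE modulo the Stratonovich noise $\int_0^{\cdot}\sigma(X_s)\circ d\tilde W_s$. A standard Gronwall stability estimate in $C^\alpha$ shows that on the event $\{\|\tilde W\|_{C^\alpha}\le\delta\}$ one has $\|X - S(h)\|_{C^\alpha}<\varepsilon$ for $\delta$ small enough; since $Q(\|\tilde W\|_{C^\alpha}\le\delta)>0$ (Brownian motion has full support in $C^\alpha$ around the zero path) and $P\sim Q$, the claim follows.

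The main obstacle in both steps is getting the convergence/stability in $C^\alpha$ norm rather than in the easier uniform norm. For the first inclusion, the polygonal approximation argument requires sharp BDG-type estimates on the stochastic integrals combined with a control of the $C^\alpha$ oscillations of $W^{(n)} - W$ and of the Stratonovich correction, for which one typically uses a Kolmogorov-type criterion together with the hypothesis that $\nabla\sigma$, $\nabla^2\sigma$ are bounded. For the second inclusion, the delicate point is controlling the Stratonovich integral $\int_0^{\cdot}\sigma(X_s)\circ d\tilde W_s$ in $C^\alpha$ by $\|\tilde W\|_{C^\alpha}$, which again exploits the $C^2_b$ regularity of $\sigma$ and, if one prefers a fully modern treatment, can be reduced to the continuity of the It\^o--Lyons solution map with respect to the rough path lift of $W$.
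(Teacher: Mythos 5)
The paper does not prove this proposition at all: it is stated as a quoted result, with explicit references to Millet--Sanz-Sol\'e and to Baldi's book, so there is no in-paper argument to compare yours against. Your outline is the correct classical skeleton of the Stroock--Varadhan support theorem (Wong--Zakai approximation for the inclusion $\operatorname{supp}(P\circ X^{-1})\subseteq\mathcal{S}$, Girsanov plus a small-ball estimate for the reverse inclusion), and your bookkeeping of the It\^o--Stratonovich correction and of the Girsanov shift is accurate.

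However, the second inclusion contains a genuine gap, and it sits exactly where the whole difficulty of the theorem lives. You assert that ``a standard Gronwall stability estimate in $C^\alpha$ shows that on the event $\{\|\tilde W\|_{C^\alpha}\le\delta\}$ one has $\|X-S(h)\|_{C^\alpha}<\varepsilon$.'' This is false as stated: for $\alpha<1/2$ the solution map is \emph{not} continuous with respect to the $\alpha$-H\"older norm of the driving path alone (the L\'evy area is the obstruction), so smallness of $\|\tilde W\|_{C^\alpha}$ does not control $\int_0^\cdot\sigma(X_s)\circ d\tilde W_s$, and no Gronwall argument closes. The classical proofs replace this step by a delicate conditional estimate of the form $P\bigl(\|X^h-S(h)\|>\varepsilon \mid \|W\|\le\delta\bigr)\to0$ as $\delta\to0$, obtained by integrating the stochastic integrals by parts and exploiting the $C^2_b$ bounds on $\sigma$; the rough-path route you mention as an alternative instead requires positivity of the probability that the \emph{lift} of $\tilde W$ is close to the trivial rough path, which is itself the support theorem for enhanced Brownian motion (Ledoux--Qian--Zhang) and cannot be invoked without care about circularity. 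The first inclusion is likewise only as strong as the Wong--Zakai theorem in $C^\alpha$ topology, which is a quotable result but not one that follows from ``sharp BDG estimates'' in a few lines. Since the paper itself simply cites the literature for this proposition, the honest course is to do the same rather than to sketch a proof whose central estimate is missing.
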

\begin{proposition}\label{localization}
    Let $\sigma_i(x):\R^n\rightarrow\R^{n\times k}, \ i=1,2 $ be measurable functions and let $X^i$ be the solutions of
    $$dX^i_t = \sigma_i(X^i_t)dW_t \qquad X^i_0=x_0.$$
    Let $D\subset \R^d$ be an open set such that on $D$ it holds $\sigma_1 = \sigma_2$ and 
    $$|\sigma_i(x)- \sigma_i(y)|\le L|x-y|.$$
    Then, if $\tau_i$ denotes the exit time of $X_i$ from $D$, we have 
    \begin{equation}
        \tau_1=\tau_2 \text{ a.s. and }\ \PP(X_i(t)=X_2(t), \text{ for every }0\le t\le \tau_1) = 1.
    \end{equation}
\end{proposition}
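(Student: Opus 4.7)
The plan is to exploit the fact that on $D$ the two equations share a common Lipschitz diffusion coefficient, and to run a standard Itô--Gronwall pathwise uniqueness argument on the stopped difference process. Some care is needed because the Lipschitz bound is only assumed locally on $D$, so the proof requires a preliminary localization.

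First, I would exhaust $D$ by an increasing sequence of open sets $(D_m)_{m\in\N}$ with $\overline{D_m}\subset D_{m+1}\subset D$ and $\bigcup_m D_m=D$. For each $m$ define
\begin{equation*}
\tau_i^m=\inf\{t\geq 0: X^i_t\notin D_m\},\qquad \tau^m=\tau_1^m\wedge\tau_2^m.
\end{equation*}
Since the paths are continuous, $\tau_i^m\nearrow\tau_i$ as $m\to\infty$. The Lipschitz hypothesis implies $\sigma_i$ is continuous on $D$, hence bounded on the compact set $\overline{D_m}$, so the stochastic integrals $\int_0^{\cdot\wedge\tau^m}\sigma_i(X^i_s)\,dW_s$ are genuine square-integrable martingales.

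Next, I would apply Itô's formula to $|X^1_{t\wedge\tau^m}-X^2_{t\wedge\tau^m}|^2$. On $[0,\tau^m]$ both trajectories lie in $\overline{D_m}\subset D$, where $\sigma_1\equiv\sigma_2$ and the Lipschitz estimate yields
\begin{equation*}
|\sigma_1(X^1_s)-\sigma_2(X^2_s)|^2=|\sigma_1(X^1_s)-\sigma_1(X^2_s)|^2\leq L^2\,|X^1_s-X^2_s|^2.
\end{equation*}
Taking expectations and using that the martingale part has zero mean after stopping, one gets
\begin{equation*}
\EE\bigl[|X^1_{t\wedge\tau^m}-X^2_{t\wedge\tau^m}|^2\bigr]\leq L^2\int_0^t \EE\bigl[|X^1_{s\wedge\tau^m}-X^2_{s\wedge\tau^m}|^2\bigr]\,ds,
\end{equation*}
and Gronwall's lemma forces the left-hand side to vanish for every $t\geq 0$. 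Thus $X^1_{t\wedge\tau^m}=X^2_{t\wedge\tau^m}$ $\PP$-a.s.\ for every $t$; by continuity, outside a single null set, $X^1_t=X^2_t$ for all $t\in[0,\tau^m]$.

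Finally, letting $m\to\infty$ gives $X^1=X^2$ on $[0,\tau_1\wedge\tau_2)$, and path continuity extends this to the closed interval $[0,\tau_1\wedge\tau_2]$. In particular the two trajectories exit $D$ at the same instant, which forces $\tau_1=\tau_2$ almost surely, and yields the claimed coincidence on $[0,\tau_1]$. The only mildly delicate point is ensuring the quadratic variation term $|\sigma_i|^2$ is integrable, which is precisely what the localization to $\overline{D_m}$ achieves: no growth assumption on $\sigma_i$ outside $D$ is ever used, because the stopped integrands depend only on values of $\sigma_i$ on $\overline{D_m}\subset D$.
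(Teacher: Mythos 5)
The paper does not actually prove this proposition: it is quoted as a known localization result, with the proof deferred to the cited references (Millet--Sanz-Sol\'e and Baldi, Ch.~9.5), so there is nothing in the text to compare against. Your argument is the standard one found in those references --- stop at the exit time from an exhausting sequence of subdomains, run It\^o/Gronwall on the stopped difference, and let the subdomains grow --- and it is correct. Two small points are worth making explicit: the exhausting sets should be chosen bounded, e.g.\ $D_m=\{x\in D:\operatorname{dist}(x,D^c)>1/m\}\cap B(0,m)$, so that $\overline{D_m}$ is genuinely compact and $\sigma_i$ is bounded there even when $D$ is unbounded (this also guarantees that every compact subset of $D$ lies in some $D_m$, which is what makes $\tau_i^m\nearrow\tau_i$); and the final identification $\tau_1=\tau_2$ rests on the observation that, $D$ being open and the paths continuous, $X^1_{\tau_1}\notin D$ on $\{\tau_1<\infty\}$, so the coincidence of the two paths up to $\tau_1\wedge\tau_2$ forces the exit times to agree.
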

Recalling the notation introduced in the proof of \autoref{prop:uniqueness_vlasov}, with these tools we prove the following
\begin{proposition}\label{support lemma}
    Let $b_t$ be the solution of 
    \begin{equation}
        db_t = \A(b_t)d\hat{W}_t
    \end{equation}\label{sde}
    with $b(0)=b_0\neq 0$. Then for every $T>0$, $\eps >0 $, $b^*\in \R^3$ we have 
    $$\PP(|b_T-b^*| \le \eps)>0$$
\end{proposition}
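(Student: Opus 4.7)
The plan is to overcome the singularity of $\A$ at $b=0$ by a localization argument. From the spectral decomposition recorded in the proof of \autoref{prop:uniqueness_vlasov}, $\A$ is Lipschitz with linear growth on $\R^3$ and of class $C^\infty$ with bounded derivatives of every order on any compact subset of $\R^3\setminus\{0\}$; the only obstruction to applying \autoref{support thm} directly is its lack of smoothness at the origin. I plan to construct a $C^2$-bounded extension $\tilde\A$ that agrees with $\A$ inside a tube bounded away from $0$, apply the support theorem to the extended SDE, and transfer the information back via \autoref{localization}.

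Fix $b^*\in\R^3$ and $\eps>0$. I choose $b^*_\eps\in\R^3\setminus\{0\}$ with $|b^*_\eps-b^*|<\eps/2$ (possible whether or not $b^*=0$). Since $\R^3\setminus\{0\}$ is path-connected, I pick a smooth curve $\gamma:[0,T]\to\R^3\setminus\{0\}$ with $\gamma(0)=b_0$ and $\gamma(T)=b^*_\eps$, and set $\delta:=\tfrac{1}{2}\inf_{t\in[0,T]}|\gamma(t)|>0$. Let $D$ be an open tubular neighbourhood of $\gamma([0,T])$ contained in $\{|b|>\delta\}$, and $D'\Subset D$ a slightly smaller tube. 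By multiplying $\A$ by a smooth cutoff supported in $D$ and equal to $1$ on $D'$, and by extending the result by a fixed bounded smooth matrix field outside $D$, I obtain $\tilde\A:\R^3\to\R^{3\times 3}$ that coincides with $\A$ on $D'$ and is globally $C^2$ with bounded first and second derivatives.

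Let $\tilde b$ solve $d\tilde b_t=\tilde\A(\tilde b_t)d\hat W_t$ with $\tilde b_0=b_0$. By \autoref{support thm}, the support of the law of $\tilde b$ in $C^\alpha([0,T];\R^3)$ equals the closure of $\{S(h):h\in\H\}$, where $S(h)$ solves \eqref{control eq}. The key step is producing a control $h$ whose associated trajectory is $\gamma$: since $\A(b)$ has full rank for $b\neq 0$ (its smallest eigenvalue is of order $|b|$, read off the explicit decomposition in the proof of \autoref{prop:uniqueness_vlasov}), the matrix $\tilde\A(\gamma(t))=\A(\gamma(t))$ is invertible along the path, so I may take
\[
\dot h_t:=\tilde\A(\gamma(t))^{-1}\Bigl(\dot\gamma(t)+\tfrac{1}{2}\nabla\tilde\A(\gamma(t))\,\tilde\A(\gamma(t))\Bigr),
\]
which is smooth on $[0,T]$, hence belongs to $\H$, and yields $S(h)\equiv\gamma$ by direct differentiation of \eqref{control eq}. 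Consequently $\PP(\sup_{[0,T]}|\tilde b_t-\gamma(t)|<\eps')>0$ for every $\eps'>0$.

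To conclude, I choose $\eps'$ smaller than both $\eps/2$ and $\operatorname{dist}(\gamma([0,T]),\partial D')$; on the positive-probability event just constructed, $\tilde b$ stays inside $D'$ throughout $[0,T]$ and $|\tilde b_T-b^*_\eps|<\eps/2$. Applying \autoref{localization} on $D'$ with $\sigma_1=\A$, $\sigma_2=\tilde\A$ (both Lipschitz there and coinciding) then yields $b_t=\tilde b_t$ for all $t\in[0,T]$ on that event, and therefore $|b_T-b^*|<\eps$ with positive probability. The main obstacle I expect is the simultaneous construction of $\tilde\A$ that stays identical to $\A$ along the path and is globally $C^2$-bounded, together with verifying invertibility of $\A(\gamma(t))$ and smoothness of $\dot h$; once these are in place, the other ingredients are direct applications of the cited theorems.
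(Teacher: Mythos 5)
Your proposal is correct and follows essentially the same strategy as the paper's proof: localize away from the singularity of $\mathcal{A}$ at the origin, replace $\mathcal{A}$ by a globally smooth compactly supported coefficient agreeing with it on the localization domain, drive the modified diffusion along a path avoiding $0$ via an explicit control in \autoref{support thm}, and transfer back with \autoref{localization}. The only difference is cosmetic (a tubular neighbourhood of a general smooth curve instead of the paper's annulus with a straight or two-segment path), and your control formula is consistent with the stated form of \eqref{control eq}.
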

\begin{proof}
Without loss of generality we can suppose that $b^*\neq 0$ (if it does, choose $|b'|\le \frac{\eps}{2}$, then use $b'$ as target point). Fix $\delta \le \frac{1}{2}(|b_0|\land |b^*|)$ , $R\ge 2(|b_0|\lor |b^*|) $ and call $D= B(0, R)\setminus \overline{B(0, \delta)} $ the open annulus of radii $\delta$ and $R$. Set
$\tau = \inf \{t>0 : b_t \notin D\} $. Now consider a new diffusion coefficient $\mathcal{A}^r$ such that $\mathcal{A}^r(b)=\mathcal{A}(b)$ for every $b\in D$ and  $\A^r \in C^\infty_c(\R^3)$.  Finally consider $b_t^r$ the solution of $db_t^r = \A^r(b_t)d\hat{W}_t$ with $b_0^r= b_0$, and $\tau^r$ its associated exit time from $D$. By \autoref{localization}, we know that $\tau = \tau^r$ a.s. and $b_t=b_t^r$ a.s. for every $t\le \tau$. This implies that
\begin{align*}
    \PP(|b_T-b^*| \le \eps) &\ge \PP(|b_T-b^*| \le \eps, \ T\le \tau, \ \sup_{t\le T}|b_t-b_t^r|=0)\\
    &\ge \PP(|b^r_T-b^*| \le \eps, \  T\le \tau^r ).
\end{align*}
Let us now suppose that the line $y_t= \frac{t}{T}b^* + \frac{T-t}{T}b_0$ lies inside $D^\eps:=\{x \in D : |x|>\delta + \eps\}$. In that case we have 
\begin{align*}
 \PP(|b^r_T-b^*| \le \eps,\ T\le \tau^r ) &\ge  \PP(|b^r_T-b^*| \le \eps,\ T\le \tau^r,\ \sup_{t\le T}|b_t^r - y_t|\le \eps/2 )  \\
    &= \PP( \sup_{t\le T}|b_t^r - y_t|\le \eps/2 ,\ T\le \tau^r)\\
    &= \PP( \sup_{t\le T}|b_t^r - y_t|\le \eps/2).
\end{align*}
Thus, we are left to show that the curve $y_t$ is in the support of the law of $b^r$ on $C^0(0, T; \R^3)$. But now we are in shape to apply \autoref{support thm}: we just need to check that there exists $h \in \H$ such that $y_t$ solves the control problem \ref{control eq}. This is readily done, setting 
\begin{equation}
    h_t = \int_0^t \A^{-1}(y_t)\pa{\frac{b^*- b_0}{T} - (\nabla \A(y_t))\A(y_t)}dt
\end{equation}
Since $y_t$ has support in $D$, $\A(y_t)$ is invertible with bounded inverse for every $t\le T$. Also $\A$ and $\nabla \A$ are bounded, thus the integrand in is $L^2$ and $h \in \H$. \\
If instead $y_t$ does not lie inside of $D$, we can find a middle point $b^{**}$ such that the line $\tilde y_t$ connecting $b_0, b^{**}$ and $b^*$ in time $T$ lies inside of $D^\eps$ and repeat the argument using $\tilde y_t$ in place of $y_t$. 
\end{proof}
To be able to apply \autoref{PropLargeValues} we need some uniformity of the previous statement with respect to the initial point $x\in \T^3$.
\begin{lemma}\label{continuity_measure_data}
     Suppose that $B_0\in C(\T^3; \R^3)$ then for every $R>0$ the map $\T^3 \ni x\rightarrow \mu_{t,x}(B(0, R))$ is continuous. 
\end{lemma}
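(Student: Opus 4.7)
The plan is to identify $\mu_{t,x}$ as the law of the SDE used in the proof of \autoref{prop:uniqueness_vlasov}, combine continuity in the initial data with a Portmanteau argument, and rule out atoms on spheres $\{|b|=R\}$ by exploiting the radial structure of the equation. Concretely, since \eqref{limit_PDE_vlasov} is the Fokker--Planck equation of $db_t=\mathcal A(b_t)\,d\hat W_t$ with $b_0=\overline B_0(x)$, uniqueness in \autoref{prop:uniqueness_vlasov} guarantees that $\mu_{t,x}$ coincides with the law of $b_t^{(x)}$, the strong solution starting from $\overline B_0(x)$. Because $\mathcal A$ is globally Lipschitz with linear growth (cf. the proof of \autoref{prop:uniqueness_vlasov}) and $\overline B_0$ is continuous, a standard Gr\"onwall estimate yields
\begin{equation*}
    \EE\bigl[|b_t^{(x_n)}-b_t^{(x)}|^2\bigr]\lesssim_{t}|\overline B_0(x_n)-\overline B_0(x)|^2\xrightarrow[n\to\infty]{}0
\end{equation*}
whenever $x_n\to x$, so $\mu_{t,x_n}\to\mu_{t,x}$ in the topology of weak convergence of probability measures on $\R^3$.

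Passing from weak convergence to convergence of $\mu_{t,x_n}(B(0,R))$ is non-trivial, since $\one_{B(0,R)}$ is discontinuous on $\{|b|=R\}$. I would therefore invoke the Portmanteau theorem: it suffices to show
\begin{equation*}
    \mu_{t,x}\bigl(\{b\in\R^3:|b|=R\}\bigr)=0\qquad\text{for every }R>0\text{ and every }x\in\T^3.
\end{equation*}
If $\overline B_0(x)=0$, then $\mathcal A(0)=0$ makes the origin absorbing, so $\mu_{t,x}=\delta_0$ and the claim is immediate for $R>0$. If $\overline B_0(x)\neq 0$, I would apply It\^o's formula to the smooth observable $R_t:=|b_t^{(x)}|^2$ and use the identities $\mathcal A(b)b=c_1|b|b$ (with $c_1=\sqrt{8\pi\log 2/15}$) and $\operatorname{tr}(\mathcal L(b))=\tfrac{8\pi\log 2}{3}|b|^2$ to reduce, via the martingale $\tilde\beta_t:=\int_0^t\frac{b_s^{(x)}}{|b_s^{(x)}|}\cdot d\hat W_s$, to the one-dimensional linear SDE
\begin{equation*}
    dR_t=2c_1 R_t\,d\tilde\beta_t+\tfrac{8\pi\log 2}{3}R_t\,dt.
\end{equation*}
Thus $R_t$ is a geometric Brownian motion and $|b_t^{(x)}|$ is lognormally distributed; its law is absolutely continuous on $(0,+\infty)$ and therefore charges no sphere.

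The delicate point is that $\tilde\beta$ is a priori only defined on the random set $\{s:b_s^{(x)}\neq 0\}$. I plan to handle this by a stopping-time argument: on $[0,\tau)$, with $\tau:=\inf\{t>0:b_t^{(x)}=0\}$, the process $\tilde\beta$ is a genuine one-dimensional Brownian motion by L\'evy's characterization, the SDE for $R_t$ derived above is valid, and its unique strictly positive solution (geometric Brownian motion starting from $|\overline B_0(x)|^2>0$) rules out $\tau<+\infty$ almost surely, so the computation extends globally. Combining this absence of atoms on spheres with the weak continuity from the first paragraph, the Portmanteau theorem yields $\mu_{t,x_n}(B(0,R))\to\mu_{t,x}(B(0,R))$ as $x_n\to x$, concluding the proof.
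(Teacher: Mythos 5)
Your proposal is correct and follows the same basic route as the paper: identify $\mu_{t,x}$ with the law of the solution $b_t^{(x)}$ of $db_t=\mathcal A(b_t)\,d\hat W_t$ started from $\overline B_0(x)$, and exploit the Lipschitz continuity of $\mathcal A$ together with the continuity of $\overline B_0$ to get continuous dependence on $x$. The paper's own proof is a two-line argument: almost sure convergence $b_t^{\overline B_0(x_n)}\to b_t^{\overline B_0(x)}$ (via continuity of the stochastic flow) followed by dominated convergence applied to $\EE\bigl[\one_{B(0,R)}(b_t^{\overline B_0(x_n)})\bigr]$.

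Where you genuinely add something is the boundary issue you flag. The paper's dominated convergence step silently requires that $\one_{B(0,R)}(b_t^{\overline B_0(x_n)})\to\one_{B(0,R)}(b_t^{\overline B_0(x)})$ almost surely, which can fail precisely on the event $\{|b_t^{\overline B_0(x)}|=R\}$; so the paper's proof implicitly uses $\mu_{t,x}(\{|b|=R\})=0$ without stating or proving it. Your Portmanteau reduction makes this hypothesis explicit, and your verification is sound: $\mathcal A(b)b=c_1|b|b$ and the radial It\^o computation turn $|b_t|^2$ into a geometric Brownian motion via the L\'evy-characterized martingale $\tilde\beta$, the explicit exponential representation rules out hitting the origin in finite time (so the localization on $[0,\tau)$ is harmless), and the resulting lognormal law of $|b_t|$ charges no sphere; the degenerate case $\overline B_0(x)=0$ gives $\mu_{t,x}=\delta_0$, which also charges no sphere of positive radius. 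The constants in your one-dimensional SDE inherit the paper's normalization of $\mathcal A$ and $\hat W$, but the conclusion (absolute continuity of the law of $|b_t|$ on $(0,\infty)$) is insensitive to them. In short: same skeleton as the paper, but your version closes a small gap that the published proof leaves open, at the modest cost of the extra radial SDE computation.
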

\begin{proof}
    Denote $b_t^\nu$ the same process as above with initial condition $\nu \in \R^3$. Since the matrix field $\A$ is Lipschitz continuous and $B_0$ is continuous by assumption, $b_t^{B_0(x_n)} \rightarrow b_t^{B_0(x)} $ a.s. if $x_n \rightarrow x$.  The result follows then by dominated convergence recalling that $$\mu_{t,x}(B(0,R))= \EE[\one_{B(0, R)}(b_t^{B_0(x)})].$$ 
\end{proof}
\begin{corollary} \label{Corollary B6 on support}
    If $\overline{B}_0\in V\cap C(\T^3;\R^3)$ and is not identically null, the family of measures $\mu_{t,x}$ satisfy the assumption of \autoref{PropLargeValues} for any value of $R>0$. 
\end{corollary}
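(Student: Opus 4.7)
The plan is to reduce the claim to a pointwise statement and then spread it to a ball by continuity. Since $\overline{B}_0 \in C(\T^3;\R^3)$ is not identically zero, I can pick $x^* \in \T^3$ with $\overline{B}_0(x^*) \neq 0$. The whole argument will be anchored at this $x^*$.

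First I would show that for every $R>0$ and every $t>0$ one has $\mu_{t,x^*}(B(0,R)^c) > 0$. By construction of $\mu_{t,x}$ (see the Fokker--Planck correspondence in the proof of \autoref{prop:uniqueness_vlasov}), this equals $\PP(\lvert b_t^{\overline{B}_0(x^*)}\rvert > R)$, where $b_\cdot^{b_0}$ is the diffusion driven by $\A$ starting from $b_0$. Pick any target point $b^*\in \R^3$ with $\lvert b^*\rvert = 2R$ and set $\eps = R/2$. Since $\overline{B}_0(x^*) \neq 0$, \autoref{support lemma} applies and yields
\[
\PP\bigl(\lvert b_t^{\overline{B}_0(x^*)} - b^*\rvert \le R/2\bigr) > 0.
\]
On this event $\lvert b_t^{\overline{B}_0(x^*)}\rvert \ge 3R/2 > R$, so $\mu_{t,x^*}(B(0,R)^c) > 0$, as desired. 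Set $\lambda_0 := \mu_{t,x^*}(B(0,R)^c) > 0$.

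Next I would upgrade this to a uniform lower bound on a small ball. By \autoref{continuity_measure_data}, the map $x \mapsto \mu_{t,x}(B(0,R))$ is continuous on $\T^3$, hence so is its complement-mass $x \mapsto \mu_{t,x}(B(0,R)^c) = 1 - \mu_{t,x}(B(0,R))$. Consequently there exists $r>0$ such that for every $x \in B(x^*,r)$
\[
\mu_{t,x}(B(0,R)^c) \ge \lambda := \lambda_0/2 > 0,
\]
which is exactly the hypothesis of \autoref{PropLargeValues} with center $x_0 = x^*$ and the chosen $R,\lambda,r$.

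The only step that is not completely routine is the first one, where I invoke \autoref{support lemma}. Note however that \autoref{support lemma} is stated precisely for nonzero initial condition, which is why $\overline{B}_0 \not\equiv 0$ together with continuity of $\overline{B}_0$ is crucial: continuity is what guarantees the existence of a point with $\overline{B}_0(x^*)\neq 0$ at which the support theorem can be applied, and the same continuity hypothesis is needed to invoke \autoref{continuity_measure_data} to spread the positivity to a whole ball. Finally, the $R$ chosen was arbitrary, so the conclusion holds for every $R>0$.
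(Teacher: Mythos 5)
Your proposal is correct and follows essentially the same route as the paper's proof: apply \autoref{support lemma} at a point where $\overline{B}_0$ does not vanish to get positive mass of $\mu_{t,x_0}$ outside $B(0,R)$, then use \autoref{continuity_measure_data} to propagate the lower bound to a small ball $B(x_0,r)$. The only difference is that you spell out the choice of target point $b^*$ with $\lvert b^*\rvert=2R$ and $\eps=R/2$, a detail the paper leaves implicit.
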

\begin{proof}
    By \autoref{support lemma}, for every $x_0\in \T^3$ such that $\overline{B}_0(x_0)\neq 0$ and $R>0$ there exist $\lambda >0$ such that $\mu_{t,x_0}(B(0, R)^c)> \lambda$, then by continuity of this quantity with respect to $x$ ensured by the \autoref{continuity_measure_data}, there exists $r>0$ such that $\mu_{t,x}(B(0, R)^c)\ge \lambda/2$ for every $x \in B(x_0, r)$. 
\end{proof}
\begin{acknowledgements}
E.L. wishes to thank Luca Gennaioli for the fruitful discussion about the weak topology on the space of probability measures.  The research of F.F. and Y.T. is
funded by the European Union (ERC, NoisyFluid, No. 101053472). Views and
opinions expressed are however those of the authors only and do not necessarily
reflect those of the European Union or the European Research Council. Neither
the European Union nor the granting authority can be held responsible for them.
\end{acknowledgements}
\bibliography{biblio}{}
\bibliographystyle{plain}

\end{document}